\tikzset{unseulcoin fill/.style={append after command={
   \pgfextra
        \draw[sharp corners, fill=#1, color=#1, line width = 0mm]%
    (\tikzlastnode.west)%
    [rounded corners=0pt] |- (\tikzlastnode.north)%
    [rounded corners=0pt] -| (\tikzlastnode.east)%
    [rounded corners=0pt] |- (\tikzlastnode.south)%
    [rounded corners=5pt] -| (\tikzlastnode.west);
   \endpgfextra}}}
\tikzset{black node/.style = {draw=black,fill=black,circle, inner sep=0, minimum size=0.15cm}}
\newtheorem{theorem}{Theorem}[section]
\newtheorem{corollary}[theorem]{Corollary}
\newtheorem{lemma}[theorem]{Lemma}
\newtheorem{observation}[theorem]{Observation}
\newtheorem{remark}[theorem]{Remark}
\def\cqedsymbol{\ifmmode$\lrcorner$\else{\unskip\nobreak\hfil
\penalty50\hskip1em\null\nobreak\hfil$\lrcorner$
\parfillskip=0pt\finalhyphendemerits=0\endgraf}\fi}
\DeclareMathOperator{\col}{col}
\DeclareMathOperator{\ter}{Ter}
\newcommand{\N}{\mathbb{N}}
\newcommand{\intv}[2]{\left \{ #1,\dots, #2\right \}}
\newcommand{\Pleft}{P^{\rm left}}
\newcommand{\Pright}{P^{\rm right}}
\newcommand{\Pmid}{P^{\rm mid}}
\newcommand{\rev}{{\rm rev}}
\newcommand{\C}{\mathcal{C}}
\DeclareMathOperator{\polyloglog}{polyloglog}
\DeclareMathOperator{\tw}{\bf tw}
\DeclareMathOperator{\pw}{\bf pw}
\newcommand{\pow}{*}
\let\le\leqslant
\let\ge\geqslant
\let\leq\leqslant
\let\geq\geqslant
\title[Long induced paths in sparse graphs and graphs with forbidden patterns]{Long induced paths in sparse graphs\\ and graphs with forbidden patterns}
\author[J.~Duron]{Julien Duron}
\address[J.~Duron]{Univ.\ Lyon, CNRS, ENS de Lyon, Université Claude Bernard Lyon 1, LIP UMR5668,
  Lyon, France}
\email{julien.duron@ens-lyon.fr}
\author[L.~Esperet]{Louis Esperet}
\address[L.~Esperet]{Univ.\ Grenoble Alpes, CNRS, Laboratoire G-SCOP,
  Grenoble, France}
\email{louis.esperet@grenoble-inp.fr}
\author[J.-F.~Raymond]{Jean-Florent Raymond}
\address[J.-F.~Raymond]{Univ.\ Lyon, CNRS, ENS de Lyon, Université Claude Bernard Lyon 1, LIP UMR5668,
  Lyon, France}
\email{jean-florent.raymond@cnrs.fr}
\date{\today}
\thanks{The authors are partially supported by the French ANR Projects TWIN-WIDTH
  (ANR-21-CE48-0014-01) and GRALMECO (ANR-21-CE48-0004), and by LabEx
  PERSYVAL-lab (ANR-11-LABX-0025).}
\begin{document}

\maketitle
\begin{abstract}
Consider a graph $G$ with a path $P$ of order $n$. What conditions force $G$ to also have a long \emph{induced} path? As complete bipartite graphs have long paths but no long induced paths, a natural restriction is to forbid some fixed complete bipartite graph $K_{t,t}$ as a subgraph. In this case we show that $G$ has an induced path of order $(\log \log n)^{1/5-o(1)}$.  This is an exponential improvement over a result of Galvin, Rival, and Sands (1982) and comes close to a recent upper bound of order $O((\log \log n)^2)$. 

Another way to approach this problem is by viewing $G$ as an ordered graph (where the vertices are ordered according to their position on the path $P$). From this point of view it is most natural to consider which ordered subgraphs need to be forbidden in order to force the existence of a long induced path. Focusing on the exclusion of ordered matchings, we improve or recover a number of existing results with much simpler proofs, in a unified way. We also show that if some forbidden ordered subgraph forces the existence of a long induced path in $G$, then this induced path has size at least $\Omega((\log \log \log n)^{1/3})$, and can be chosen to be increasing with respect to $P$. 
\end{abstract}

\section{Introduction}

The starting point of this work is the following result proved by Galvin, Rival, and Sands in 1982.

\begin{theorem}[\cite{galvin1982ramsey}]\label{th:galvin}
For every $t\in \N$ there is an unbounded function $f\colon \N\to\N$ such that for every graph $G$ the following holds: if $G$ is $K_{t,t}$-subgraph free and has a path of order $n$, then $G$ has an induced path of order at least $f(n)$.
\end{theorem}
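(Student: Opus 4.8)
The plan is to establish the following quantitative reformulation, from which the theorem follows with $f(n):=\max\{k:g(t,k)<n\}$ (which is unbounded): for every fixed $t$ and every $k$, the supremum $g(t,k)$ of the orders of paths occurring in $K_{t,t}$-subgraph-free graphs having no induced path of order $k$ is finite. Fixing $t$, I would argue by induction on $k$, the cases of small $k$ being trivial. So suppose $g(t,k-1)<\infty$, let $G$ be $K_{t,t}$-subgraph-free, let $P=v_1v_2\cdots v_n$ be a path in $G$ with $n$ enormous (larger than any bound that will come up), and assume for contradiction that $G$ has no induced path of order $k$.

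Two ingredients drive the argument. First, since a shortest path between two vertices of a connected graph is induced, "no induced path of order $k$" is equivalent to "every connected induced subgraph has diameter at most $k-2$"; applying this to $G[\{v_a,\dots,v_b\}]$ and measuring the total index-displacement along a shortest $v_a$--$v_b$ path inside it produces a \emph{long chord}, that is, an edge $v_{a'}v_{b'}$ with $a\le a'<b'\le b$ and $b'-a'\ge (b-a)/(k-2)$. Second, $K_{t,t}$-subgraph-freeness yields a K\H{o}v\'ari--S\'os--Tur\'an--type bound: for any vertex set $B$, only $O_t(1)$ vertices can have at least $|B|/2$ neighbours in $B$, so there is no large "almost complete bipartite" piece. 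To combine these, I would split $P$ into a large number $N$ of consecutive intervals $I_1,\dots,I_N$, colour each pair of intervals according to whether (and how densely) edges of $G$ run between them, and apply Ramsey's theorem to obtain a still-large homogeneous family. In the edge-rich outcome, zooming into two of these intervals exposes, via the K\H{o}v\'ari--S\'os--Tur\'an bound (after one further refinement), a $K_{t,t}$ — a contradiction. In the edge-poor outcome, the chosen intervals interact through only boundedly many chords; applying the inductive hypothesis inside each interval (each being $K_{t,t}$-subgraph-free and containing a path of order $n/N\gg g(t,k-1)$, hence an induced path of order roughly $k-1$) and threading two such sub-paths together along a connecting chord yields an induced path of order $k$ — again a contradiction. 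So $n$ cannot have been enormous, and $g(t,k)<\infty$.

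The crux — and the reason the resulting $f$ is so weak, which is exactly what this paper improves — is the threading step: concatenating induced sub-paths found in different intervals along an inter-interval chord may create unwanted edges, both between the two sub-paths and along the short connecting segment of $P$. One must therefore prescribe in advance a bounded-size "interface" at each end of each interval, restrict to intervals that are widely separated in the ordering, and synchronise all of these choices through an extra round of Ramsey's theorem together with the sparsity estimate; this forces the induced sub-paths to reach their prescribed interface vertices, a mildly rooted variant of the induction. Each such round of Ramsey's theorem costs a tower-type blow-up, and the recursion on $k$ stacks these losses, so $f$ ends up growing only like an inverse-iterated logarithm. A secondary technical point is that vertices of very large degree are not controlled by $K_{t,t}$-subgraph-freeness; but their neighbourhoods are highly structured (few common neighbours), which restricts how $P$ can pass through them, so such vertices can be handled separately before the main partitioning argument.
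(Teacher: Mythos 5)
Your approach is genuinely different from the paper's. The paper's proof of this statement (Theorem~\ref{thm:GRS}, a translation of Galvin, Rival, and Sands' original argument) fixes, for every pair $v_i\prec v_j$, an increasing induced path $P_{i,j}$ (which may be assumed short, else we are done), then colours \emph{quadruples} $\{i,j,k,\ell\}$ with $i<j<k<\ell$ by $0$ if $G$ has no edge between $P_{i,j}$ and $P_{k,\ell}$, and otherwise by the pair $(s,t)$ of positions of a chosen such edge on the two paths. Because the paths are short, the number of colours is bounded, so one application of Ramsey's theorem for $4$-tuples gives a large monochromatic $4$-clique. In the colour-$0$ branch a shortest increasing path through the concatenation of the $P_{\pi(j),\pi(j+1)}$ must visit every segment, hence is long; in the non-zero branch the synchronised positions across the clique assemble into an ordered half-graph, hence a $K_{t,t}$. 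No induction on $k$, no interval decomposition, no K\H{o}v\'ari--S\'os--Tur\'an, and no threading are used.

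Your plan instead partitions $P$ into intervals, colours \emph{pairs} of intervals by edge presence or density, recurses on $k$, and both points where you deviate carry gaps that the sketch flags but does not close. In the edge-rich branch, a Ramsey-homogeneous family on a colour meaning ``at least one edge'' (or even ``density below $\epsilon$'') does not hand you a $K_{t,t}$: K\H{o}v\'ari--S\'os--Tur\'an needs a genuine density lower bound between \emph{one} pair of large sets, while your cross-edges can be scattered arbitrarily across the intervals. The paper avoids this by colouring with the \emph{positions} of an edge on the short paths $P_{i,j}$; if you recorded positions within intervals the colour set would scale with the interval size unless restricted to a bounded set of candidate positions, which is precisely what fixing short induced paths buys. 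In the edge-poor branch, the inductive hypothesis only gives \emph{some} induced path of order $k-1$ inside an interval, not one terminating at the endpoint of a prescribed inter-interval chord, so the ``rooted variant'' you gesture at is not a mild tweak: a correct rooted statement has to survive the Ramsey refinement, the density bookkeeping, and the recursive descent simultaneously, and as written the proposal does not establish that such a statement is provable. So while the ingredients (long chords from the diameter bound, sparsity control via K\H{o}v\'ari--S\'os--Tur\'an) are reasonable, the proposal as stated has genuine unresolved gaps in both outcomes of its case distinction, whereas the paper's single quadruple-Ramsey argument dispatches the theorem cleanly (and with a better, polylog-iterated bound).
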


This result states that in the absence of large $K_{t,t}$ subgraphs, long induced paths are unavoidable induced subgraphs of graphs that contain large paths.
The function $f$ is not given explicitly in \cite{galvin1982ramsey}, but it directly follows from the proof there and known bounds on multicolor Ramsey numbers for quadruples that $f(n)=\Omega((\log \log \log n)^{1/3})$ (see Theorem \ref{thm:GRS} and its proof in Section~\ref{sec:triple}).
We may wonder whether this result is best possible. For hereditary graph classes, being $K_{t,t}$-subgraph free is actually necessary. Indeed, complete (bipartite) graphs have a path that visit all their vertices, yet they do not have induced paths on more than 3 vertices. If a hereditary graph class $\mathcal{C}$ has graphs with arbitrarily large $K_{t,t}$ subgraphs, then by Ramsey's theorem $\mathcal{C}$ contains arbitrarily large complete graphs or complete bipartite graphs so as observed above the outcome of \Cref{th:galvin} cannot hold. 

We note than an independent proof of \Cref{th:galvin} was given 30 years later in \cite{atminas2012linear} by Atminas, Lozin, and Razgon, who were not aware of the original result.\footnote{Personal communication.} They did not try to optimize $f$ and indeed a quick glance at their proof suggests a function with at least 8 nested logarithms.

The question of the optimal function $f$ for \Cref{th:galvin} can be stated more generally as follows.

\begin{restatable}{question}{fstquestion}\label{question}
Given a graph class $\mathcal{C}$, what is the maximum function $f_\mathcal{C}\colon \N\to\N$ such that the following property holds?
\begin{itemize}
    \item[$(\star)$] Every $G\in \mathcal{C}$ with a path of order $n$ has an induced path of order at least $f_{\mathcal{C}}(n)$.
\end{itemize}
\end{restatable}

A visual summary of the main existing results on this question is displayed in \Cref{fig:survey}. We note that Question \ref{question} has been mostly considered for hereditary classes (i.e., classes of graphs that are closed under taking induced subgraphs).

\definecolor{rouge}{RGB}{253,82,50}
\definecolor{orang}{RGB}{253,175,50}
\definecolor{vert}{RGB}{50,225,100}

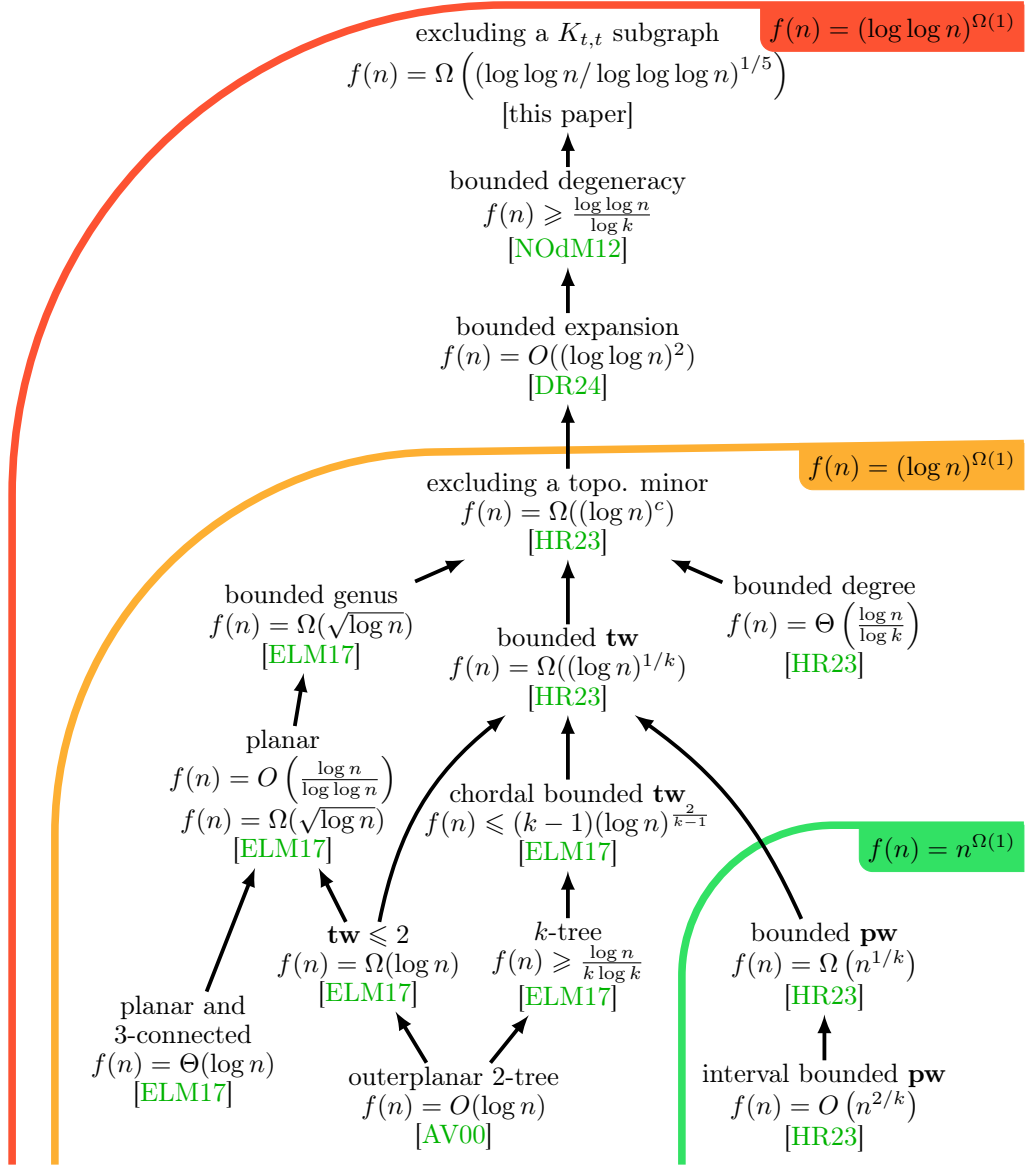
\begin{figure}[htb]
    \centering
\begin{tikzpicture}[y=-1cm, x=1cm, 
scale=0.75,
every node/.style={rectangle, inner sep=0.5mm, outer sep=0, align=center, draw=none, font={\fontsize{10}{11}\selectfont}}]

  \path[draw=rouge,line width=1mm, rounded corners=6.5cm] (0.25, 28) -- (0.25, 7.5) -- (18, 7.5);
  \path [draw=orang,line width=0.1cm, rounded corners=5cm] (1, 28) -- (1, 15.5) -- (18, 15.25);
  \path[draw=vert, line width=0.1cm, rounded corners=2cm] (12, 28) -- (12,22) -- (18, 22);
  
\begin{scope}[every node/.style={anchor = north east, inner sep=1mm,font={\fontsize{10}{11}\selectfont}}]
  \node[unseulcoin fill=rouge] at (18,7.5){$f(n) = (\log\log n)^{\Omega(1)}$};
  \node[unseulcoin fill=orang] at (18, 15.25){$f(n) = (\log n)^{\Omega(1)}$};
  \node[unseulcoin fill=vert] (poly) at (18, 22){$f(n) = n^{\Omega(1)}$};
  \end{scope}
  
  \node (bic) at (10, 8.75){excluding a $K_{t,t}$ subgraph\\$f(n) = \Omega\left ( \left ({\log \log n/\log \log \log n}\right )^{1/5} \right )$\\{[this paper]}};

  \node (deg) at (10, 11.25){bounded degeneracy\\$f(n) \geq  \frac{\log \log n}{\log k}$\\\cite{nevsetvril2012sparsity}};

  \node (exp) at (10, 13.75){bounded expansion\\$f(n) = O((\log \log n)^2)$\\\cite{defrain2024sparse}};

  \node (topo) at (10, 16.5){excluding a topo. minor\\$f(n) = \Omega( (\log n)^{c})$\\\cite{hilaire2023long}};

  \node (degree) at (14.5, 18.5){bounded degree\\$f(n) = \Theta\left (\frac{\log n}{\log k}\right )$\\
 \cite{hilaire2023long}
 };

  \node (genus) at (5.5, 18.5){bounded genus\\$f(n) = \Omega(\sqrt{\log n})$\\\cite{esperet2017long}};

  \node (planar) at (5, 21.5){planar\\$f(n) = O\left (\frac{\log n}{\log \log n}\right )$\\$f(n) = \Omega(\sqrt{\log n})$\\\cite{esperet2017long}};

  \node (outer) at (8, 27){outerplanar 2-tree\\$f(n) = O(\log n)$\\\cite{arocha2000long}};

  \node (tw) at (10, 19.25){bounded {\bf tw}\\$f(n) = \Omega((\log n)^{1/k})$\\\cite{hilaire2023long}};

  \node (pl3c) at (3.25, 26){planar and\\3-connected\\$f(n) = \Theta(\log n)$\\\cite{esperet2017long}};

  \node (tw2) at (6.5, 24.5){$\textbf{tw} \leq 2$\\$f(n) = \Omega(\log n)$\\\cite{esperet2017long}};

  \node (ktree) at (10, 24.5){$k$-tree\\$f(n) \geq \frac{\log n}{k \log k}$\\\cite{esperet2017long}};

  \node (chor) at (10, 22){chordal bounded\ $\bf{tw}$\\$f(n) \leq (k-1) (\log n)^{\frac{2}{k-1}}$\\\cite{esperet2017long}};

  \node (pw) at (14.5, 24.5){bounded {\bf pw}\\$f(n) = \Omega\left (n^{1/k} \right)$\\\cite{hilaire2023long}};
  
  \node (intv) at (14.5, 
  27){interval bounded $\bf{pw}$\\$f(n) = O\left (n^{2/k}\right )$\\\cite{hilaire2023long}};
  
  \begin{scope}[every path/.style={latex-, draw, line width=0.5mm}]
    \path (pw) -- (intv);
    \path (tw) to[bend right=20] (tw2);
    \path (tw) -- (chor);
    \path (chor) -- (ktree);
    \path (tw2) -- (outer);
    \path (planar) -- (tw2);
    \path (planar) -- (pl3c);
    \path (genus) -- (planar);
    \path (topo) -- (genus);
    \path (topo) -- (tw);
    \path (topo) -- (degree);
    \path (exp) -- (topo);
    \path (deg) -- (exp);
    \path (bic) -- (deg);
    \path (tw) to[bend left = 15] (pw);
    \path (ktree) -- (outer);
\end{scope}

\end{tikzpicture}
    \caption{Currently best known bounds regarding \Cref{question}.
    We recall that for hereditary graph classes, excluding a bipartite subgraph is the most general setting where the property of \Cref{question} holds with $f$ unbounded. In the results above, $k$ is a strict upper-bound on the considered parameter and $c$ depends on the excluded graph. Treewidth and pathwidth are respectively abbreviated {\bf tw} and {\bf pw}. Arrows point towards more general concepts.}
    \label{fig:survey}
\end{figure}

In particular, less general than $K_{t,t}$-subgraph free graphs, the case of $d$-degenerate\footnote{A graph is \emph{$d$-degenerate} if all its subgraphs have a vertex of degree at most $d$.} graphs was studied by Ne{\v{s}}et{\v{r}}il and Ossona de Mendez~\cite{nevsetvril2012sparsity}.
They proved that if $\mathcal{C}$ is the class of $d$-degenerate graphs then $f_{\mathcal{C}}(n) \geq \log \log n/\log( d+1)$.
This is close to the correct order of magnitude as Defrain and the third author recently constructed in \cite{defrain2024sparse} 2-degenerate graphs with paths of order $n$ and where induced paths have order $O((\log \log n)^2)$.
As graphs of bounded degeneracy exclude large complete (bipartite) graphs, this upper-bound also holds for the function of \Cref{th:galvin}. Hence there is currently an exponential gap between the best known upper- and lower-bounds for the maximum function $f$ such that \Cref{th:galvin} holds.
In this paper we prove the following exponential improvement on \Cref{th:galvin}.

\begin{theorem}\label{th:main}
For every every positive integer $t$ there is a constant $c$ such that if a graph $G$ has a path of order $n$ and no $K_{t, t}$ subgraph, then $G$ has an induced path of order at least $c (\log \log n/\log \log \log n)^{1/5}$.
\end{theorem}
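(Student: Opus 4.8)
My plan is to prove the contrapositive in a quantitative form: if $G$ has no \emph{increasing} induced path of order $k$, then $n \le \exp\!\big(\exp(O_t(k^{5+o(1)}))\big)$, which after inversion gives the stated bound. Here $G$ is regarded as an ordered graph, its vertices listed in the order in which they occur along $P$, and ``increasing'' refers to this order. First I would observe that we may delete every vertex of $G$ outside $P$: this preserves $K_{t,t}$-freeness and cannot lengthen the longest induced path, so from now on $|V(G)|=n$ and the order is defined on all of $V(G)$. The reason to aim for increasing induced paths is that this is the form in which the recursion below closes — an increasing induced path, read from its left endpoint, is grown one vertex at a time using only the chords going backwards from its current last vertex — and it also matches the stronger ``increasing'' conclusion announced for the ordered setting.

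Next I would build a rank/depth decomposition of $(G,P)$ in the spirit of degeneracy and generalized colouring numbers. Scanning the vertices from left to right, I would assign to each vertex $v$ a rank $\rank(v)$ together with a small family of \emph{ribs}: short induced subpaths of $G$ anchored at $v$ and going to the left, which witness that rank. The assignment is arranged so that a vertex of rank $r$ is the right endpoint of an increasing induced path of order at least $r$. Hence, if $G$ has no increasing induced path of order $k$, every vertex has rank less than $k$, and the rooted forest on $V(G)$ in which the parent of $v$ is the anchor of one of its ribs has depth less than $k$. The remaining task is to bound $n$ in terms of the depth of this forest and its branching.

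This is where $K_{t,t}$-freeness enters, playing the role that the Ramsey argument for $4$-tuples plays in the original proof of \Cref{th:galvin} (which is exactly what costs a third logarithm there). The point is that vertices of equal rank that attach, through their ribs, to the left part of $P$ in the same way are interchangeable, and too many of them sharing a common pattern of backward neighbours would span a $K_{t,t}$; by the Kővári–Sós–Turán bound, the number of distinct such patterns realizable among $m$ vertices is only $\poly_t(m)$. Consequently the forest cannot branch by more than $n^{O_t(1)}$ at any level without either increasing its depth or creating a forbidden $K_{t,t}$, and iterating this over the fewer than $k$ levels — while tracking how a pattern selected at one level restricts those available at the next — bounds $n$ by roughly $\exp\!\big(\exp(k^{5+o(1)})\big)$; for the small range of $n$ not covered by this estimate one falls back on the quantitative version of \Cref{th:galvin}. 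I expect the main obstacle to be precisely this last bookkeeping: making the trade-off between branching and the path length still available at each level tight enough that the exponents combine to $5$ rather than growing without control. A secondary technical difficulty is to choose the ribs coherently across the recursion, so that concatenating one rib per level yields a genuinely induced path rather than a mere walk; this is handled by using the large supply of ribs guaranteed by $K_{t,t}$-freeness to avoid the finitely many vertices committed so far.
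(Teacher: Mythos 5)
Your plan is genuinely different from the paper's proof, and the difference is exactly where the gap lies. The paper's argument is a Ramsey argument on \emph{triples}: for each pair $u\prec v$ one fixes a maximum increasing induced path $P_{u,v}$, then colours each ordered triple $(u,v,w)$ according to (i) how the three path-orders $|P_{u,v}|,|P_{u,w}|,|P_{v,w}|$ compare, (ii) where $P_{u,v}$ and $P_{u,w}$ last agree and $P_{u,w},P_{v,w}$ first agree, and (iii) whether certain subpaths of $P_{u,v},P_{v,w},P_{u,w}$ are joined by a chord and, if so, the positions of its endpoints. There are $\Theta(s^4)$ colours. Erd\H{o}s--Rado for $3$-uniform hypergraphs then yields a monochromatic clique; on it, the chord-colour $\col_3\ne 0$ forces a $K_{t,t}$ (via the injectivity/constancy lemmas), and $\col_3=0$ lets one concatenate two of the canonical paths to beat $|P_{u,w}|$, a contradiction. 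The $1/5$ comes out mechanically: $q=\Theta(s^4)$ colours times $N=\Theta(s)$ in the exponent of the Ramsey bound gives $\log\log n = \Theta(s^5\log s)$.

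Your plan replaces this Ramsey step with a degeneracy-style rank/ribs forest of depth $<k$ whose branching is to be controlled by K\H{o}v\'ari--S\'os--Tur\'an. That replacement is the part that does not close. First, the quantitative claim that ``the forest cannot branch by more than $n^{O_t(1)}$ at any level'' is vacuous as stated: a depth-$k$ forest with per-level branching $n^{O_t(1)}$ gives only $n\le n^{O_t(k)}$, which carries no information. To obtain $n\le\exp\exp(O_t(k^{5+o(1)}))$ you would need the branching at level $i$ to be bounded in terms of $k$ and $t$ alone (or in terms of the size of the subtree below), and no mechanism for that is given. Second, the claim that $K_{t,t}$-freeness bounds the number of ``backward-neighbourhood patterns'' to $\poly_t(m)$ is a bound on the number of distinct \emph{trace sets}, not a bound of the kind that would let one identify a large set of interchangeable vertices while also preserving inducedness after concatenating one rib per level; the paper avoids exactly this pitfall by letting Ramsey hand it a large set on which the interaction between any three of the canonical paths $P_{u,v}$ is \emph{literally identical} (positions of agreement points and of chord endpoints included), which is much stronger than having the same neighbourhood trace. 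Third, the crucial finishing move in the paper --- concatenating $\Pleft_{u,w}\cdot\Pmid_{u,v}\cdot\Pright_{u,v}\cdot\Pleft_{v,w}\cdot\Pmid_{v,w}\cdot\Pright_{u,w}$ and deriving a contradiction with the maximality of $P_{u,w}$ when no chord of a forbidden shape exists --- has no analogue in your sketch; without it, there is nothing forcing the rank (hence the depth) to actually grow when the branching is small. You flag the ``bookkeeping'' and the ``coherent choice of ribs'' as obstacles; those are not side issues but the entire content of the proof, and as written they are unresolved. You would also need a replacement for the explicit three-case case analysis (Lemmas~\ref{lem:edges123}--\ref{lem:edges132}) that actually produces the $K_{t,t}$; invoking KST in place of this does not obviously give you the bipartite clique itself, only an edge-count bound.

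In short: the emphasis on increasing induced paths and on using $K_{t,t}$-freeness is right, and the target shape $\exp\exp(k^{5+o(1)})$ is right, but the decomposition-plus-KST route is not a proof; the load-bearing step (turning $K_{t,t}$-freeness into a per-level branching bound that multiplies out to a double exponential, while keeping the grown path induced) is missing, and it is precisely the step that the paper's Ramsey-on-triples argument is designed to supply.
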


Hence the gap is now only polynomial between the upper and lower bounds for the function $f_\mathcal{C}$, where $\mathcal{C}$ is the class of graphs with no $K_{t,t}$ subgraphs.
Moreover, we have the following dichotomy: 

\begin{corollary}\label{cor:dicho_f}
There exists a constant $c$ such that, for every hereditary graph class $\C$, either $f_\C(n) = \Omega((\log\log n)^c)$ or $f_\C(n) = O(1)$.
\end{corollary}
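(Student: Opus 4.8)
The plan is a case analysis on whether $\C$ is $K_{t,t}$-subgraph free for some $t$. The two alternatives --- ``there exists $t$ such that $\C$ is $K_{t,t}$-subgraph free'' and ``for every $t$ some graph in $\C$ contains $K_{t,t}$ as a subgraph'' --- are complementary, so it suffices to prove $f_\C(n) = \Omega((\log\log n)^c)$ in the first case (for a universal exponent $c$, the implied constant being allowed to depend on $\C$) and $f_\C(n) = O(1)$ in the second. In the first case \Cref{th:main} applies with this value of $t$, so every $G \in \C$ with a path of order $n$ has an induced path of order at least $c_t(\log\log n/\log\log\log n)^{1/5}$ for some constant $c_t$. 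Writing $x = \log\log n$, for any fixed $c < 1/5$ the inequality $(x/\log x)^{1/5} \ge x^{c}$ is equivalent to $x^{1-5c}\ge \log x$ and hence holds for all $n$ large enough; thus $f_\C(n) = \Omega((\log\log n)^c)$, say with $c = 1/6$.

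For the second case I would extract a clean obstruction by a Ramsey argument, as already sketched in the introduction. Fix $m$, let $t = R(m,m)$ with $R$ the diagonal two-colour Ramsey number, and pick $G \in \C$ containing a $K_{t,t}$ subgraph; let $A, B$ be the two parts of this $K_{t,t}$, so $|A|=|B|=t$ and every $A$--$B$ pair is an edge of $G$. By Ramsey's theorem $G[A]$ contains a set $A'$ of $m$ vertices that is either complete or edgeless, and likewise $G[B]$ contains such a set $B'$ of $m$ vertices. Since all edges between $A'$ and $B'$ are present, $G[A'\cup B']$ contains $K_{2m}$ if both $A'$ and $B'$ are complete, is isomorphic to $K_{m,m}$ if both are edgeless, and contains $K_{m+1}$ otherwise (the complete side together with one vertex of the other). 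As $\C$ is hereditary, it therefore contains $K_m$ or $K_{m,m}$ as an induced subgraph, for every $m$. Both $K_m$ and $K_{m,m}$ have a Hamiltonian path but no induced path on more than three vertices, so for every $n$ some graph in $\C$ has a path of order at least $n$ whose induced paths all have order at most $3$; hence $f_\C(n) \le 3 = O(1)$.

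The only step that is not entirely routine is the Ramsey extraction: one must check that the three configurations listed (both sides complete, both sides edgeless, one of each) are exhaustive, and that a complete set joined completely to an edgeless set does contain a large clique --- which it does. Everything else is bookkeeping: the first case is a direct application of \Cref{th:main} together with the elementary estimate $\log\log\log n = o((\log\log n)^{\varepsilon})$, and the second simply makes precise the hereditary-class observation of the introduction. The value $c = 1/6$ is not optimal (any $c<1/5$ works), and closing the remaining gap with the $O((\log\log n)^2)$ upper bound of \cite{defrain2024sparse} would require improving \Cref{th:main} itself rather than this corollary.
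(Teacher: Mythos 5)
Your proof is correct and takes exactly the route the paper intends: the paper states Corollary~\ref{cor:dicho_f} without a displayed proof because it follows directly from \Cref{th:main} together with the observation already made in the introduction (``If a hereditary graph class $\mathcal{C}$ has graphs with arbitrarily large $K_{t,t}$ subgraphs, then by Ramsey's theorem $\mathcal{C}$ contains arbitrarily large complete graphs or complete bipartite graphs\dots''). You have simply spelled out the Ramsey extraction and the elementary asymptotics that the paper leaves implicit, and both are carried out correctly (in particular, the three-way case analysis on whether $A'$, $B'$ are cliques or independent sets is exhaustive, and $K_m$, $K_{m,m}$ indeed have Hamiltonian paths but no induced path on more than $3$ vertices, so $f_\C(n)\le 3$ once $\C$ contains one of them for every $m$).
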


\subsection*{Forbidden patterns} In most proofs of results in the area, what matters when considering some path $P=v_1,\ldots,v_n$ in a graph $G$ is not so much the subgraph of $G$ induced by $P$, but instead the \emph{ordered} subgraph of $G$ induced by $P$, with underlying order $v_1<\cdots<v_n$. It turns out that forbidding ordered subgraphs in $G$ (rather than subgraphs) not only allows for a much more fine-grained understanding of the function $f_\C$, but also provides a unifying framework to obtain as simple corollaries most of the results that have been proved so far in the literature.

\medskip

Let us say that a graph $G$ with a path $P=v_1,\ldots,v_n$ contains some ordered subgraph $H$ as a \emph{pattern} if $H$ appears as an ordered subgraph of $G[V(P)]-E(P)$, considered with the ordering $v_1<\cdots<v_n$. This means that the ordering of the vertices of the copy of $H$ in $G$ is the same ordering as in $G$ (along the path $P$), and moreover we do not consider the edges of $P$ as part of a pattern. If $G$ does not contain a pattern $H$, we say that $G$ \emph{avoids the pattern~$H$}.
In this setting we consider the following natural counterpart of \Cref{question}.

\begin{restatable}{question}{sndquestion}\label{question-pattern}
Given an ordered graph $H$, what is the maximum function $g_{H}\colon \N\to\N$ such that the following property holds?
\begin{itemize}
    \item[$(\star\star)$] Every graph $G$ with a path $P$ of order $n$ that avoids $H$ as a pattern has an induced path of order at least $g_{H}(n)$.
\end{itemize}
\end{restatable}

Observe that we can assume  that $P$ is a Hamiltonian path in $G$ (by considering the subgraph of $G$ induced by $P$ instead of $G$).
If $H=K_2$ then $G$ is precisely an induced path on $n$ vertices, so $g_{K_2}(n)=n$. 
Our first result is that if $g_H(n)=\omega(\log n)$, then $H$ must be a matching. Hence, better-than-logarithmic bounds on the size of induced paths can only be obtained by considering very simple patterns, namely ordered matchings. It is thus natural to investigate  $g_H(n)$ when $H$ is an ordered matching, and we prove the following: \begin{itemize}
    \item either $H$ is \emph{non-crossing} (that is, it does not contain vertices $a<b<c<d$ with edges $ac,bd$) and then $g_H(n)$ is polynomial, or
    \item $H$ contains a pair of crossing edges and then $g_H(n)$ is polylogarithmic.
\end{itemize} 

It turns out that excluding an ordered matching contains a number of earlier results as particular cases, and we therefore either recover or improve a number of known results as simple corollaries of our result:

\begin{itemize}
    \item for the class $\mathcal{C}$ of graphs of pathwidth at most $p$, $f_\mathcal{C}(n)=\Omega(n^{1/p})$ \cite{hilaire2023long};
    \item for the class $\mathcal{C}$ of graphs of treewidth at most $t$, $f_\mathcal{C}(n)=(\log n)^{\Omega(1/t)}$ \cite{hilaire2023long};
    \item for the class $\mathcal{C}$ of outerplanar graphs, $f_{\mathcal{C}}(n) = \Omega(\log n)$~\cite{esperet2017long};
    \item for the class $\mathcal{C}$ of planar graphs (or more generally for any class of graphs embeddable on a fixed surface), $f_\mathcal{C}(n)=\Omega((\log n)^{1/2})$ \cite{esperet2017long};
    \item for the class $\mathcal{C}$ of $K_t$-minor-free graphs, $f_\mathcal{C}(n)=(\log n)^{\Omega(1/t^2)}$ \cite{hilaire2023long}.
\end{itemize}

Note that in the last item in the list above (for $K_t$-minor-free graphs) the exponent in the polylogarithmic bound of \cite{hilaire2023long} was an unspecified function of $t$ (depending on the Robertson-Seymour graph minor structure theorem). Actually the result of \cite{hilaire2023long} was obtained directly for the class of graphs with no $K_t$ as a topological minor (which contains the class of $K_t$-minor-free graphs), but we do not believe that we can recover this result by only forbidding order matchings. This leads us to consider the more general case where $H$ is a  star forest in a companion paper \cite{II}. In \cite{II}  we completely characterize all ordered subgraphs $H$ such that $g_H$ is polylogarithmic. A consequence of the main result of \cite{II}  is that for the class $\mathcal{C}$ of graphs which do not contain $K_t$ as a topological minor, $f_\mathcal{C}(n)=(\log n)^{\Omega(1/(t (\log t)^2))}$. The proof of this result is significantly more involved than the proofs of the previous results involving forbidden matchings. 

\medskip

We also obtain the following simple corollary of our results.

\begin{corollary}\label{cor:minclosed}
Let $\mathcal{C}$ be a proper minor-closed class. If $\mathcal{C}$ excludes some outerplanar graph, then $f_\mathcal{C}(n)$ is polynomial, and otherwise $f_\mathcal{C}(n)$ is polylogarithmic. 
\end{corollary}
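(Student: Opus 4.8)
The plan is to split the argument according to whether $\C$ excludes an outerplanar graph. Suppose first that $\C$ does \emph{not} exclude any outerplanar graph. Being minor-closed, $\C$ then contains every outerplanar graph, and both bounds follow from known results. For the upper bound, the outerplanar $2$-trees of~\cite{arocha2000long} lie in $\C$ and realise $f(n)=O(\log n)$, hence $f_\C(n)=O(\log n)$. For the lower bound, as $\C$ is a \emph{proper} minor-closed class it excludes some graph, which is a minor of a complete graph; thus $\C$ is $K_t$-minor-free for some $t$, and the bound recovered earlier for $K_t$-minor-free graphs gives $f_\C(n)=(\log n)^{\Omega(1/t^2)}$. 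So $f_\C$ is polylogarithmic. (One should also note the upper bound holds for all $n$, not just the orders produced by the construction; this is a routine padding argument.)

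The substance is the case where $\C$ excludes some outerplanar graph $H$, which I would reduce to forbidding an ordered matching. We may assume $H$ is edge-maximal outerplanar, since enlarging $H$ within the outerplanar graphs on its vertex set keeps it outside the minor-closed class $\C$; then $H$ is $2$-connected, its outer boundary is a Hamiltonian cycle $u_1u_2\cdots u_h$, and in the embedding all other edges (its chords) are pairwise non-crossing. I then ``fan out'' each vertex: list the edges of $H$ at $u_i$ in rotational order around $u_i$ (the two outer-cycle edges coming first and last), replace $u_i$ by a path $Q_i$ with one vertex per such edge, and attach the $s$-th edge to the $s$-th vertex of $Q_i$. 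The resulting graph $\tilde H$ has a Hamiltonian path $R$ running along $Q_1Q_2\cdots Q_h$ (consecutive $Q_i$ being joined by the image of $u_iu_{i+1}$); the edges of $\tilde H$ not on $R$ form an ordered matching $M=M(H)$, non-crossing because the chords of $H$ were; and contracting each $Q_i$ turns $\tilde H$ back into $H$.

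Granting this, let $G\in\C$ have a path $P$ of order $n$; after replacing $G$ by $G[V(P)]$ we may assume $P$ is Hamiltonian. If $(G,P)$ contained $M(H)$ as a pattern, then contracting the subpath of $P$ between each two consecutive vertices realising $M(H)$ would display $H$ as a minor of $G$ (through an intermediate minor built from $\tilde H$), contradicting $H\notin\C$. Hence $(G,P)$ avoids the non-crossing ordered matching $M(H)$, so by the fact, recorded above, that $g$ is polynomial for non-crossing ordered matchings, $f_\C(n)\ge g_{M(H)}(n)=n^{\Omega(1)}$; together with the trivial bound $f_\C(n)\le n$ this shows $f_\C$ is polynomial.

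The step I expect to be the main obstacle is this fan-out: one must check that the rerouted chords genuinely form a \emph{non-crossing} matching, the delicate point being vertices $u_i$ of large degree. This is exactly where the outerplanar embedding is used --- around such a $u_i$ the chords appear in rotational order, all between the two outer-cycle edges at $u_i$, so placing their $Q_i$-endpoints in that order forces every pair of rerouted chords to be nested or disjoint along $R$. The remainder (the contraction argument, and the appeals to the cited results) is routine.
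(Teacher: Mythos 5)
Your proof is correct and follows essentially the same route as the paper's: in the first case, use \cite{arocha2000long} (\Cref{th:arocha}) for the logarithmic upper bound and \Cref{cor:Hminorfree} (after noting that a proper minor-closed class is $K_t$-minor-free for some $t$) for the polylogarithmic lower bound; in the second case, pass to a maximal (hence Hamiltonian) outerplanar graph $H$ excluded by $\C$, fan out each vertex of the Hamiltonian path into a subpath so that the non-path edges of $H$ become a non-crossing ordered matching $M(H)$, observe that containing $M(H)$ as a pattern yields $H$ as a minor, and conclude via \Cref{thm:ncom}. The only difference is cosmetic: you spell out the fan-out via the rotational order around each vertex to make the non-crossing property of $M(H)$ explicit, whereas the paper leaves the ordering implicit and delegates this to a figure.
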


\medskip

We conclude the paper with a complete characterization of ordered graphs $H$ such that $g_H(n)=O(1)$ and prove the following dichotomy result which is very similar to Corollary \ref{cor:dicho_f}: for any ordered graph $H$, either $g_H(n) = \Omega((\log\log \log n)^c)$ for some $c>0$, or $g_H(n) = O(1)$. The proof of one of the two directions is very similar to that of Theorem \ref{th:galvin}, which was our starting point. As a consequence of our results we obtain the following dichotomies, that reveal jumps in the growth rate of~$g_H$.
\begin{corollary}\label{cor:dicho}
Let $H$ be an ordered graph.
\begin{enumerate}
    \item $g_H(n) = n^{\Omega(1)}$ if and only if $H$ is a subgraph of a non-crossing matching, otherwise $g_H(n) = O(\log n)$;
    \item\label{e:obip} $g_H(n) = (\log \log n)^{\Omega(1)}$ if $H$ is bipartite, with one partite set preceding the other in the order;
    \item $g_H(n) = (\log \log \log n)^{\Omega(1)}$ if and only if $H$ is a subgraph of the ordered half-graph, and otherwise $g_H(n) = O(1)$.
\end{enumerate}
\end{corollary}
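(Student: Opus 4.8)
The plan is to obtain all three dichotomies by assembling results proved earlier in the paper, the only new ingredient being the following monotonicity of $g$: if $H'$ is an ordered subgraph of $H$, then every graph that contains $H'$ as a pattern also contains $H$ as a pattern, so the class of graphs avoiding $H'$ is contained in the class of graphs avoiding $H$, and therefore $g_{H'}(n)\ge g_H(n)$. I will combine this with (a) the classification of ordered matchings (a non-crossing matching forces an induced path of polynomial order, while a matching with a crossing pair forces only a polylogarithmic one); (b) the fact that $g_H(n)=\omega(\log n)$ forces $H$ to be a matching; (c) the $(\log\log n)^{\Omega(1)}$ lower bound behind Theorem~\ref{th:main}; and (d), from the concluding section, the Galvin--Rival--Sands-type bound and the characterization of the ordered graphs $H$ with $g_H(n)=O(1)$.

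For item~(1): if $H$ is a subgraph of a non-crossing matching, then $H$ is an ordered subgraph of some non-crossing matching $M$, so $g_H(n)\ge g_M(n)=n^{\Omega(1)}$ by (a). Conversely, if $H$ is not a subgraph of a non-crossing matching then either $H$ is not a matching, and then $g_H(n)=O(\log n)$ by (b); or $H$ is a matching and therefore contains a crossing pair, hence contains as an ordered subgraph the ordered graph $H_0$ on $a<b<c<d$ with edges $ac$ and $bd$, so that $g_H(n)\le g_{H_0}(n)$. To bound $g_{H_0}$, I would take $G$ to be a maximal outerplanar graph on $n$ vertices (an outerplanar $2$-tree) and let $P$ be the Hamiltonian path obtained by deleting one edge of its outer cycle; then $E(G)\setminus E(P)$ consists of the chords of the polygon together with one outer edge, which are pairwise non-crossing along $P$, so $G$ avoids $H_0$ as a pattern. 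Since by~\cite{arocha2000long} such graphs exist in which every induced path has order $O(\log n)$, this gives $g_{H_0}(n)=O(\log n)$ and finishes item~(1).

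Items~(2) and~(3) also follow from monotonicity together with the stated bounds. For~(2), if $H$ is bipartite with partite sets $A$ and $B$ and all of $A$ precedes all of $B$, then with $t=\max(|A|,|B|)$ the graph $H$ is an ordered subgraph of the ordered $K_{t,t}$ in which one side entirely precedes the other, and hence $g_H(n)$ is at least the corresponding value of $g$, which is $(\log\log n)^{\Omega(1)}$ by (c). For~(3), the forward implication is immediate: if $H$ is an ordered subgraph of the ordered half-graph $\mathcal H$, then $g_H(n)\ge g_{\mathcal H}(n)=(\log\log\log n)^{\Omega(1)}$ by the Galvin--Rival--Sands-type bound of (d) (which follows the strategy of Theorem~\ref{th:galvin}). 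For the reverse implication, if $H$ is not an ordered subgraph of $\mathcal H$, I would take $G$ to be a half-graph together with one of its Hamiltonian paths $P$ (the half-graph with $a_i\sim b_j\iff i\le j$ has the Hamiltonian path $b_1,a_1,b_2,a_2,\dots$); then $(G-E(P),<_P)$ is an ordered subgraph of $\mathcal H$, so $G$ avoids $H$, while $G$ contains no induced $2K_2$ and hence no induced $P_5$, so every induced path of $G$ has order at most $4$ and $g_H(n)=O(1)$. This last assertion is exactly the $O(1)$-characterization of the concluding section, so in the write-up it can simply be invoked.

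The reasoning above is essentially bookkeeping over the earlier theorems, so the only steps requiring genuine care are the two explicit constructions: the maximal outerplanar graph behind $g_{H_0}(n)=O(\log n)$ in item~(1) and the half-graph behind $g_H(n)=O(1)$ in item~(3). In each case one must check that the chosen Hamiltonian path leaves a pattern that still avoids the relevant forbidden ordered graph; once the vertex order along the path is fixed these verifications are routine (the half-graph case relying on the elementary fact that a graph without an induced $2K_2$ has no induced $P_5$), but this is where an error would most naturally arise.
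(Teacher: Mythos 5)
Your proposal is correct and takes essentially the same route as the paper, which cites Observation~\ref{obs:d2b}, Corollary~\ref{cor:1crossUB}, and Theorem~\ref{thm:ncom} for item~(1), Theorem~\ref{th:ordbip} for item~(2), and Observation~\ref{obs:d2a} together with Theorem~\ref{th:log3} for item~(3); you reconstruct these ingredients and glue them together with the monotonicity $H'\subseteq H \Rightarrow g_{H'}\ge g_H$, which is exactly the implicit glue the paper relies on. The only cosmetic difference is that for the $O(1)$ direction of item~(3) you build the bound directly from the half-graph and the $2K_2$-free $\Rightarrow$ no induced $P_5$ observation, whereas the paper routes this through Observation~\ref{obs:d2a} and the Example~1 construction (a bipartite supergraph of the half-graph); both constructions are equivalent for this purpose.
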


One remark about Corollary \ref{cor:dicho} is that there is a significant difference between case (1) and cases (2)--(3). In (1) the exponent depends on the excluded pattern $H$, while in (2)--(3) the exponent is an absolute constant, independent of the pattern.

\medskip

In the companion paper \cite{II}, we will add the following item to the dichotomies above: 

\begin{theorem}[\cite{II}]\label{th:II}
Let $H$ be an ordered graph. Then $g_H(n) = (\log n)^{\Omega(1)}$ if and only if $H$ is a constellation, and otherwise $g_H(n) = O((\log \log n)^2)$.
\end{theorem}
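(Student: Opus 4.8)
We only outline a possible strategy; the full argument is carried out in \cite{II}. Recall from \cite{II} that a \emph{constellation} is a particular kind of ordered star forest; the two features of this notion that we use are that the family of constellations is closed under taking ordered subgraphs, and that it contains every ordered matching. Since $g_H$ can only decrease when $H$ is enlarged, and since by the remark following \Cref{question-pattern} one may always assume that the path $P$ is Hamiltonian, the statement splits into an upper bound valid whenever $H$ is \emph{not} a constellation and a lower bound valid whenever $H$ \emph{is} a constellation.

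For the upper bound, the plan is to exhibit a single ``universal'' family of host graphs. Starting from the recursive construction of Defrain and the third author \cite{defrain2024sparse}, which yields $2$-degenerate graphs with a Hamiltonian path of order $n$ whose induced paths all have order $O((\log\log n)^2)$, one reshapes the recursion so that, reading the non-path edges in the order of the Hamiltonian path, the set of ordered patterns occurring in the host graph of order $n$ is exactly the set of constellations on at most $h(n)$ vertices, for some function $h$ tending to infinity. Granting this, for every fixed ordered graph $H$ that is not a constellation and every large enough $n$ the host graph avoids $H$ as a pattern while keeping induced paths of order $O((\log\log n)^2)$, so $g_H(n)=O((\log\log n)^2)$. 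The work here is essentially bookkeeping: at every level of the recursion one must control the ordered pattern formed by the newly inserted edges and choose the gadget so that this pattern stays inside the constellation class, while the recursive structure keeps the induced paths short.

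For the lower bound, let $G$ have a Hamiltonian path $P=v_1<\dots<v_n$ and avoid a constellation $H$ on $k$ vertices as a pattern, and suppose for contradiction that $G$ has no induced path of order $(\log n)^{c}$ for a suitable $c=c(k)>0$. The plan is to build a copy of $H$ by a recursion that processes the stars of $H$ one at a time: at a generic step one keeps a still-large subinterval $I$ of $P$ together with the sub-constellation of $H$ not yet realised, and a Ramsey-type argument on $I$ --- using that $G[I]$ has no long induced path, hence has many chords --- produces either a long induced path of $G$ (a contradiction) or a much smaller subinterval inside which one further star of $H$ can be placed, with its centre and its leaves in the order prescribed by $H$ and compatibly with the stars placed so far. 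The order in which the stars are processed, and the fact that the placements never conflict, is precisely where the structural (non-crossing, laminar) part of the definition of constellation enters. After $k$ such steps $H$ has been realised as a pattern in $G$, a contradiction; the quantitative loss accumulated along the recursion turns $n$ into $(\log n)^{\Omega(1/k)}$, and when $H$ is the constellation read off a $K_t$-subdivision one has $k=\Theta(t(\log t)^2)$, recovering the polylogarithmic bound for $K_t$-topological-minor-free classes mentioned before \Cref{cor:minclosed}.

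The hard part is the lower bound, and within it the requirement that all $k$ stars be realised \emph{simultaneously}. In the matching case treated in the present paper the forbidden edges are vertex-disjoint and can be handled independently; the stars of a constellation share vertices and overlap, so the recursion must carry enough structural information about the current interval --- essentially a colouring recording, for each already-used vertex, its adjacency pattern to the relevant portion of the interval --- to guarantee that adding the next star does not destroy the earlier ones. Making this bookkeeping work with only a moderate loss at each step, rather than an extra logarithm per star (which would collapse the bound to something like $(\log\log n)^{\Omega(1/k)}$), is the crux of the whole argument.
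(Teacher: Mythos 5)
There is no proof of \Cref{th:II} in this paper to compare your sketch against: the theorem is stated with the citation~\cite{II}, and the surrounding text says explicitly \say{In the companion paper \cite{II}, we will add the following item to the dichotomies above.} The only technical content in the present paper touching on the theorem is the parenthetical remark that a constellation is a star forest with a specific vertex ordering, and the discussion after \Cref{cor:Hminorfree} promising that \cite{II} improves the exponent for $K_t$-topological-minor-free classes to $\Omega(1/(t(\log t)^2))$. Since the paper gives no argument for the theorem, your sketch cannot be judged against it.

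On its own terms, two points in your sketch deserve a second look. First, you write that \say{the stars of a constellation share vertices and overlap}, but a star forest is by definition a vertex-disjoint union of stars; the difficulty you are gesturing at is presumably the interleaving of the stars in the vertex order (and the fact that a centre may have leaves on both sides or several on one side), not shared vertices. Second, your upper-bound plan asks for a host graph whose realised patterns are \emph{exactly} the constellations on at most $h(n)$ vertices. That exact characterisation is far stronger than what the dichotomy needs --- it suffices that for each fixed non-constellation $H$ the host eventually avoids $H$ while keeping induced paths $O((\log\log n)^2)$ --- and you offer no mechanism to enforce it; calling this step \say{bookkeeping} understates a genuine gap in the sketch. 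Neither issue can be resolved by reference to the paper, which delegates the whole proof to \cite{II}.
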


In the statement of Theorem \ref{th:II}, a \emph{constellation} is a star forest with a specific vertex ordering (see \cite{II} for more details).

\subsection*{Organization of the paper}
In \Cref{sec:prelim} we introduce the necessary tools and definitions. \Cref{sec:proof} is devoted to the proof of \Cref{th:main}. 
We study forbidden patterns in Section \ref{sec:pattern}, first in the case of matchings in Subsection \ref{sec:matching}, where we prove in particular Corollary \ref{cor:minclosed} and then for general patterns in Subsection \ref{sec:triple}, where we prove Corollary \ref{cor:dicho}.
We conclude with open problems in \Cref{sec:open}.

\section{Preliminaries}
\label{sec:prelim}

In all the paper, to avoid any ambiguity we always consider the \emph{order} of a path $P$ (its number of vertices), denoted by $|P|$. We never refer to the length (number of edges) of a path. 

\medskip

Logarithms are in base 2. As we will often be using several levels of exponentiations, it will sometimes be more convenient to write exponentiation in-line: we will then write $a\pow b$ instead of $a^b$. We omit parentheses for the sake of readability but it should be implicit that $\pow$ is not associative and $n_1 \pow n_2\pow \cdots  \pow n_k=n_1 \pow (n_2 \pow (\cdots \pow n_k )\cdots ) $. We use $\log^{(i)}$ to denote the logarithm iterated $i$ times. That is, $\log^{(0)}$ is the identity function and for every integer $i\geq 1$ and every $x$ such that $\log^{(i-1)}(x)>0$, $\log^{(i)} x = \log (\log^{(i-1)} x)$.

\medskip

We deal with simple, undirected, and loopless graphs and hypergraphs. %
A hypergraph $H$ is a \emph{$k$-clique} if its hyperedges are all the $k$-element subsets of its vertex set. For a fixed coloring of the hyperedges of $H$, we say that a subhypergraph $H'$ of $H$ is \emph{monochromatic} if all its hyperedges have the same color.
We use the following bound on  multicolored Ramsey numbers given by Erd\H{o}s and Rado.

\begin{theorem}[{\cite[Theorem~1]{erdos1952combinatorial}}]\label{th:ramseyc}
For every integers $q, k, N\geq 1$, there is a number
\[
r = R_q(N;k) \leq q \pow (q^{k-1}) \pow \cdots \pow (q^2) \pow (q(N-k)+1)
\]
such that for every coloring of the hyperedges of a $k$-clique $K$ of order $r$  with $q$ colors, $K$ contains a monochromatic $k$-clique of order $N$. Note that in particular, when $k=3$, $R_q(N;3)\le q\pow q \pow (2q(N-3)+1)$.
\end{theorem}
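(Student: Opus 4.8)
The plan is to prove the bound by induction on the uniformity $k$, via the classical Erd\H{o}s--Rado ``stepping-down'' argument, and then to read off the $k=3$ statement by substitution. For the base case I would take $k=1$: a $1$-clique of order $r$ is just $r$ vertices, each one a hyperedge, a $q$-colouring is a vertex colouring, and a monochromatic $1$-clique of order $N$ is a colour class of size $N$; pigeonhole shows $r=q(N-1)+1$ suffices, which is exactly the claimed formula with an empty tower.

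For the inductive step I would fix a $q$-colouring $\chi$ of the $k$-subsets of $[r]$ and greedily build vertices $v_1,v_2,\dots$ together with a nested family of ``live'' sets $[r]=A_0\supseteq A_1\supseteq\cdots$ as follows: having chosen $v_1,\dots,v_{j-1}$ and $A_{j-1}$, pick $v_j\in A_{j-1}$ and then pass to a subset $A_j\subseteq A_{j-1}\setminus\{v_j\}$ on which, for every $(k-1)$-subset $T\subseteq\{v_1,\dots,v_j\}$ containing $v_j$, the colour $\chi(T\cup\{w\})$ does not depend on $w\in A_j$. There are $\binom{j-1}{k-2}$ such sets $T$, so this step costs a factor $q^{\binom{j-1}{k-2}}$ (together with the one deleted vertex) in the size of the live set. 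A quick induction then shows that $\chi(T\cup\{w\})$ is in fact constant over $w\in A_j$ for \emph{every} $(k-1)$-subset $T$ of $\{v_1,\dots,v_j\}$; equivalently, the colour of $\{v_{i_1},\dots,v_{i_k}\}$ with $i_1<\cdots<i_k$ depends only on $(i_1,\dots,i_{k-1})$. Summing the per-step cost via the hockey-stick identity $\sum_{j=1}^{m}\binom{j-1}{k-2}=\binom{m}{k-1}$ shows that taking $r$ of size roughly $q^{\binom{m}{k-1}}$ (up to lower-order additive terms from the deleted vertices and ceilings) lets the construction run for $m$ steps with $A_m$ still nonempty.

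To close the induction I would set $m=R_q(N-1;k-1)$, colour each $(k-1)$-subset $T$ of $\{v_1,\dots,v_m\}$ by the common value of $\chi(T\cup\{w\})$ over later vertices $w$, and apply the induction hypothesis to obtain a monochromatic $(k-1)$-clique $W$ of order $N-1$; the structural property above then gives that $W$ together with any later vertex is a monochromatic $k$-clique of order $N$. Unrolling the resulting recursion $R_q(N;k)\lesssim q^{\binom{R_q(N-1;k-1)}{k-1}}$ from the base case produces a tower of exponentials of height $k$, and the $k=3$ bound $R_q(N;3)\le q\pow q\pow(2q(N-3)+1)$ follows by substituting the $k=1$ and $k=2$ estimates and simplifying.

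The conceptual ingredients --- the greedy stepping-down construction and the hockey-stick summation --- are routine; the part that needs care is the quantitative bookkeeping, namely tracking exactly where one can shave a unit in the clique order (the $N\to N-1$ in the recursion) and how the ``$-1$'' and ceiling terms aggregate over the $k$ levels, so as to land on precisely the stated tower $q\pow(q^{k-1})\pow\cdots\pow(q^2)\pow(q(N-k)+1)$ rather than a slightly weaker one. As this is a classical result, it would also be entirely legitimate to simply cite \cite{erdos1952combinatorial} and verify only the one-line $k=3$ specialisation.
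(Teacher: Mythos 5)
The paper does not prove this theorem at all; it is taken verbatim as a citation (\cite[Theorem~1]{erdos1952combinatorial}), so there is no in-paper argument to compare against. Your sketch is the classical Erd\H{o}s--Rado stepping-down argument, and as an outline it is correct: the base case by pigeonhole, the greedy construction of vertices $v_1,v_2,\dots$ with shrinking live sets $A_j$ so that colours of $k$-sets are determined by the first $k-1$ indices, the auxiliary $(k-1)$-uniform colouring, and the closing step appending one more vertex from $A_m$ to a monochromatic $(k-1)$-clique. Your final remark is also the key point of comparison: since this is a classical off-the-shelf result, simply citing it (as the paper does) is the appropriate move, and the remaining work is only the one-line $k=3$ specialisation.

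On that specialisation, be a little careful: direct substitution of $k=3$ into the displayed formula gives $q\pow(q^2)\pow(q(N-3)+1)=q\pow q\pow(2q(N-3)+2)$, whereas the paper's ``in particular'' clause asserts the slightly smaller $q\pow q\pow(2q(N-3)+1)$. The latter therefore does not literally follow by ``simplifying'' the displayed tower; it is either a minor off-by-one in the paper's restatement or a marginally sharper constant that Erd\H{o}s--Rado's original computation happens to give at $k=3$. For the paper's application (any triple-exponential bound suffices) the discrepancy is immaterial, but if you intend to derive the $k=3$ line by substitution you should expect the exponent $2q(N-3)+2$, not $2q(N-3)+1$.
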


\section{\texorpdfstring{Proof of \Cref{th:main}}{Proof of Theorem 1}}
\label{sec:proof}

In this section we prove \Cref{th:main}. The reader is invited to start reading the much simpler proof of Theorem \ref{thm:GRS} (which is very similar to the original proof of Theorem \ref{th:galvin} in \cite{galvin1982ramsey}) as a comparison. 

\medskip

Observe that we only need to prove the statement on graphs with a Hamiltonian path of order $n$ as we can simply ignore the vertices not belonging to the path of order $n$.
So we consider a graph $G$ with a Hamiltonian path of order $n$.
We define $\prec$ as an ordering of $V(G)$ according to this Hamiltonian path.
A path $u_1u_2\dots u_\ell$ of $G$ is \emph{increasing} if $u_1 \prec u_2 \dots \prec u_\ell$. In what follows, by \emph{largest vertex} we always mean largest with respect to $\prec$. An \emph{ordered triple} of vertices of $G$ is a triple $(u,v,w) \in V(G)^3$ where $u\prec v \prec w$.

For any integer $t$, let $s$ be the largest integer such that 
\[n\geq  (10^3 s^4) \pow (10^3 s^4) \pow (2 \cdot 10^3 s^4 (\max(s, 2t + 5))).\] Note that  $s=\Omega((\log^{(2)} n/\log^{(3)}n)^{1/5})$, where the hidden multiplicative constant depends on $t$.
We will show that $G$ contains a $K_{t, t}$ subgraph or an increasing induced path of order $s$. In the following, let us assume that every increasing induced path in $G$ has less than $s$ vertices (otherwise we are done).

Any two vertices $u \prec v$ of $G$ are connected by an increasing path, so they are connected by an increasing induced path (for instance any increasing path of minimum order). We define $P_{u,v}$ as an increasing induced path of \emph{maximum} order between $u$ and $v$. Note that by assumption on $G$, we have $|P_{u,v}|< s$.

\subsection{Coloring the triples}

Recall  that the hyperedges of the 3-clique on $V(G)$ are all the 3-elements subsets $\{u,v,w\}$ of vertices of $G$. These subsets $\{u,v,w\}$ are in one-to-one correspondence with the ordered triples of vertices $(u,v,w)$ of $G$ (by the definition above, such a triple satisfies  $u\prec v \prec w$), and we will consider the two objects interchangeably. More precisely we will color ordered triples of vertices of $G$, and view the resulting coloring as a coloring of the 3-clique on $V(G)$. We will then discuss the properties of a monochromatic 3-clique $K$ that can be found there. If $h$ is a function whose domain is the set of ordered triples of $G$, we say that \emph{$K$ is a monochromatic 3-clique for $h$} if $K$ is a monochromatic 3-clique in the 3-clique on $V(G)$ whose hyperedges have been colored according to $h$.

\medskip

We now define our first coloring $\col_1$ of the ordered triples of $V(G)$. For any ordered triple $(u, v, w)$ of $V(G)$, we set
\[
\col_1(u, v, w) = \begin{cases}
			1, & \text{if } |P_{u,v}| > |P_{u,w}|\\
            2, & \text{if } |P_{u,v}| < |P_{u,w}|\\
			3, & \text{if } |P_{u,v}| = |P_{u,w}| \text{ and } |P_{v,w}| > |P_{u,w}|\\
            4, & \text{if } |P_{u,v}| = |P_{u,w}| \text{ and } |P_{v,w}| < |P_{u,w}|\\
            0, & \text{otherwise.}
		 \end{cases}
\]

\begin{lemma}\label{lem:sizes}
Let $K$ be a monochromatic 3-clique for $\col_1$ of size at least $s$, then for any ordered triple $(u, v, w)$ of $K$ we have $\col_1(u, v, w) = 0$. In particular $|P_{u, v}| = |P_{u, w}| = |P_{v, w}|$.
\end{lemma}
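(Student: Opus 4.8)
The plan is to rule out colors $1,2,3,4$ for any triple of $K$; since $K$ is monochromatic for $\col_1$, every triple then has color $0$, and unpacking the definition of color $0$ immediately gives $|P_{u,v}|=|P_{u,w}|$ (otherwise the triple would get color $1$ or $2$) and then $|P_{v,w}|=|P_{u,w}|$ (otherwise it would get color $3$ or $4$), hence $|P_{u,v}|=|P_{u,w}|=|P_{v,w}|$, which is the conclusion. So the whole content is the case analysis for colors $1$ through $4$.

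The key elementary fact to use is that for any two distinct vertices $x\prec y$, the path $P_{x,y}$ contains at least its two endpoints and, by our standing assumption that every increasing induced path of $G$ has fewer than $s$ vertices, satisfies $|P_{x,y}|<s$; thus $|P_{x,y}|\in\{2,\dots,s-1\}$, a set of exactly $s-2$ integers, so any strictly monotone sequence of such path-orders has length at most $s-2$. Now suppose for contradiction that $K$ is monochromatic of color $c\in\{1,2\}$. Let $u_0$ be the $\prec$-smallest vertex of $K$ and enumerate the others as $v_1\prec v_2\prec\cdots\prec v_{|K|-1}$. For all $i<j$ the ordered triple $(u_0,v_i,v_j)$ lies in $K$ and has color $c$, which means $|P_{u_0,v_i}|>|P_{u_0,v_j}|$ if $c=1$ and $|P_{u_0,v_i}|<|P_{u_0,v_j}|$ if $c=2$; in either case $\bigl(|P_{u_0,v_i}|\bigr)_{1\le i\le |K|-1}$ is strictly monotone of length $|K|-1\ge s-1>s-2$, a contradiction. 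The case $c\in\{3,4\}$ is symmetric: let $w_0$ be the $\prec$-largest vertex of $K$ and enumerate the others as $u_1\prec\cdots\prec u_{|K|-1}$; for $i<j$ the triple $(u_i,u_j,w_0)$ has color $c$, whose third clause compares $|P_{u_j,w_0}|$ with $|P_{u_i,w_0}|$ and yields $|P_{u_i,w_0}|<|P_{u_j,w_0}|$ if $c=3$ and $|P_{u_i,w_0}|>|P_{u_i,w_0}|$ — rather, $|P_{u_i,w_0}|>|P_{u_j,w_0}|$ — if $c=4$, again a strictly monotone sequence of length $|K|-1>s-2$, a contradiction.

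Hence no triple of $K$ gets a nonzero color, so every triple gets color $0$, and the ``in particular'' clause follows as explained above. There is no real obstacle here: the argument is just Ramsey-style extraction of a monotone subsequence, and the only point requiring care is the exact arithmetic $2\le|P_{x,y}|\le s-1$ together with $|K|\ge s$, so that the forbidden monotone sequences have length $|K|-1\ge s-1$, one more than the $s-2$ available values.
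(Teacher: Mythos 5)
Your proof is correct and follows essentially the same approach as the paper: extract the strictly monotone sequence of path-orders $(|P_{u_0,u_i}|)_i$ (or $(|P_{u_i,w_0}|)_i$) from a monochromatic $K$ of nonzero color and derive a contradiction from the fact that these orders all lie in $\{2,\dots,s-1\}$. The only cosmetic difference is that the paper dispatches colors $3,4$ by invoking symmetry (``up to taking the reverse ordering, $c\in\{1,2\}$'') whereas you spell those cases out directly, which is slightly more work but arguably more transparent.
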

\begin{proof}
Let $c$ be the unique color of the ordered triples of $K$ in  $\col_1$, and assume for the sake of contradiction that $c\ne 0$.
We can assume without loss of generality, up to taking the reverse ordering, that $c\in\{1,2\}$.
Let $u_0 \prec u_1 \prec \dots \prec u_{s-1}$ be the vertices of $K$.

The sequence $(|P_{u_0, u_i}|)_{1 \le i \le s-1}$ is strictly monotone with $i$: it is strictly decreasing if $c=1$, and strictly increasing  if $c=2$.
Hence since the order of any path $P_{u_0, u_i}$ is at least 2, at least one of the paths $P_{u_0, u_1}$ and $P_{u_0, u_{s-1}}$ has order at least $2 + (s-2) = s$, a~contradiction.
\end{proof}

Lemma \ref{lem:sizes} immediately implies the following statement.

\begin{remark}\label{rmk:common-order}
Let $K$ be a monochromatic 3-clique for $\col_1$ of size at least $s$, then there exists an integer $\ell \in \intv{2}{s-1}$ such that for any vertices $u, v$ of $K$, $|P_{u, v}| = \ell$. 
\end{remark}

  Note that for an ordered triple $(u, v, w)$ of $V(G)$, $P_{u, v}$ and $P_{u, w}$ are not necessarily internally vertex-disjoint. To keep track of this fact, we denote by $x(u, v, w)$ the largest common vertex of $P_{u, v}$ and $P_{u, w}$, and we denote by $d(u, v, w)$ its position on the path $P_{u, v}$ (that is, its distance to $u$ on $P_{u, v}$) and by $d'(u, v, w)$ its position on the path $P_{u, w}$. 
  
 \smallskip
 
 We now define a second coloring of the ordered triples of vertices of $G$.
 For every ordered triple $(u, v, w)$ of $V(G)$, we set $\col_2(u, v, w) = (d(u, v, w) , \delta(u, v, w))$, where:
\[
\delta(u, v, w) = \begin{cases}
			-1, & \text{if } d(u,v,w) < d'(u, v, w)\\
            0, & \text{if } d(u,v,w) = d'(u, v, w)\\
            1, & \text{if } d(u,v,w) > d'(u, v, w).
		 \end{cases}
\]
 
\begin{lemma}\label{lem:split}
Let $K$ be a monochromatic 3-clique for $\col_2$ of size at least $s$. Then there exists an integer $d < s$ such that  for any ordered triple $(u, v, w)$ of $K$ we have $\col_2(u, v, w)= (d, 0)$.
\end{lemma}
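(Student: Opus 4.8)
The goal is to show that on a large monochromatic 3-clique $K$ for $\col_2$, the common color of all triples must be of the form $(d,0)$ for some fixed $d<s$. The fact that the color is constant, say $(d,\delta)$ for some $d$ and $\delta\in\{-1,0,1\}$, is immediate from the definition of a monochromatic clique; and $d<s$ follows because $d(u,v,w)$ is the distance from $u$ to a vertex of $P_{u,v}$, and $|P_{u,v}|<s$ by the standing assumption on $G$ (so every position on $P_{u,v}$ is an integer in $\intv{0}{s-2}$). So the real content is ruling out $\delta=\pm1$.

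First I would note that, up to reversing the ordering $\prec$ (which swaps the roles of $u$ and $w$ in a suitable sense and interchanges $\delta=-1$ with $\delta=1$), it suffices to derive a contradiction in the case $\delta = 1$, i.e.\ $d(u,v,w) > d'(u,v,w)$ for every ordered triple of $K$. I would also first invoke Lemma \ref{lem:sizes} / Remark \ref{rmk:common-order}: since $K$ is monochromatic for $\col_2$ and has size at least $s$, after possibly passing to a sub-clique (or simply because we may always assume we already intersected the two colorings — the cleanest is to argue on a clique monochromatic for \emph{both} $\col_1$ and $\col_2$, or observe $K$ can be chosen inside such a clique) there is a common path order $\ell\in\intv{2}{s-1}$ with $|P_{u,v}| = \ell$ for all $u,v\in K$. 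Write $u_0\prec u_1 \prec\cdots\prec u_{m-1}$ for the vertices of $K$, with $m\ge s$.

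The heart of the argument is then a telescoping/monotonicity argument on the branch point $x(u,v,w)$. Fix the smallest vertex $u_0$ and look at the triples $(u_0,u_i,u_j)$ for $1\le i<j$. The common value $d$ is the position on $P_{u_0,u_i}$ of the last vertex shared with $P_{u_0,u_j}$, and $\delta=1$ says this position is strictly larger than the corresponding position $d'$ on $P_{u_0,u_j}$. Intuitively this means $P_{u_0,u_j}$ "branches off earlier" than $P_{u_0,u_i}$, and iterating over $j = i+1, i+2,\dots$ should force the initial segments of the paths $P_{u_0,u_i}$ to get pushed further and further, producing a path that is too long (order $\ge s$) — the same flavor of contradiction as in Lemma \ref{lem:sizes}. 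Concretely, I would try to show that the initial segment of $P_{u_0,u_i}$ up to its branch vertex with $P_{u_0,u_{i+1}}$, followed by a suitable continuation of $P_{u_0,u_{i+1}}$, yields an increasing induced path whose order strictly exceeds $\ell$; since all $P_{u_0,\cdot}$ have order exactly $\ell$ this already contradicts the choice of $P_{u_0,u_{i+1}}$ as a \emph{maximum}-order increasing induced path, unless one is careful that the concatenation is actually induced. The delicate point — and the step I expect to be the main obstacle — is precisely verifying that such a concatenation is an induced path: one must control the chords between the part coming from $P_{u_0,u_i}$ and the part coming from $P_{u_0,u_{i+1}}$, and this is where the definition of $x(u,v,w)$ as the \emph{largest common vertex} (rather than an arbitrary one) and the increasing nature of all paths involved have to be used. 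If a direct concatenation does not immediately work, the fallback is the counting/monotonicity route: associate to each $u_i$ the position $d$ of the branch vertex with the "next" $u_j$, show that along the chain this quantity (or the actual graph-distance along $P_{u_0,u_{m-1}}$ of the branch vertices) is strictly monotone in $i$, and since it lives in $\intv{0}{\ell-1}\subseteq\intv{0}{s-2}$ while $i$ ranges over $\ge s-1$ values, we get a contradiction by pigeonhole. Either way, the conclusion is that $\delta=0$, hence the common color is $(d,0)$ with $d<s$, as claimed.
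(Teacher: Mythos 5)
Your reduction to one sign of $\delta$ by reversing the order and the observation that $d<s$ are both fine, but there are two issues. First, you suggest invoking Lemma~\ref{lem:sizes} / Remark~\ref{rmk:common-order}, noting yourself that $K$ is only assumed monochromatic for $\col_2$ and proposing to pass to a sub-clique monochromatic for $\col_1$ as well. That is not available under the stated hypotheses and, more to the point, is not needed: the paper's proof of this lemma uses only monochromaticity for $\col_2$ and never appeals to a common path order $\ell$ (the common order enters only later, in Lemma~\ref{lem:u+-}).

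Second, and more substantially, your primary route---concatenating an initial segment of $P_{u_0,u_i}$ with a continuation of $P_{u_0,u_{i+1}}$ and arguing the result is a too-long increasing \emph{induced} path---is not what the paper does, and you correctly identify its fatal gap: there is no control on chords between the two pieces, and nothing forces the concatenation to be induced. The paper avoids constructing any new path. Fix $u_0$ and set $x_{i,j}=x(u_0,u_i,u_j)$ for $0<i<j<s$. Since $x_{i,j}$ sits at the fixed position $d$ on $P_{u_0,u_i}$, it is independent of $j$; write $x_i:=x_{i,s-1}=x_{i,j}$. If $d=0$ then every $x_{i,j}=u_0$ and $\delta=0$, contradiction, so $d>0$. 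Now for $i<j<s$, compare two positions on the single path $P_{u_0,u_j}$: the vertex $x_i=x_{i,j}$ sits at position $d'(u_0,u_i,u_j)$, while $x_j=x_{j,s-1}$ sits at position $d$. Since $\delta\neq 0$ these positions always differ in a fixed direction, so $i\mapsto x_i$ is strictly monotone with respect to $\prec$. All the $x_i$ (for $1\le i\le s-2$) lie on $P_{u_0,u_{s-1}}$, and since $d>0$ and the paths are increasing we have $u_0\prec x_i\preceq u_i\prec u_{s-1}$, so they are distinct from $u_0$ and $u_{s-1}$. That gives $s$ distinct vertices on $P_{u_0,u_{s-1}}$, contradicting $|P_{u_0,u_{s-1}}|<s$. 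Your fallback paragraph gestures at exactly this (``actual graph-distance along $P_{u_0,u_{m-1}}$ of the branch vertices''), but the quantity you first propose to track, ``the position $d$,'' is constant by hypothesis, and the mechanism for the monotonicity you need---comparing $x_{i,j}$ and $x_{j,k}$ on the shared path $P_{u_0,u_j}$ via $x_{i,j}=x_{i,k}$---is the missing step.
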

\begin{proof}
As $K$ is monochromatic there are $d\in \intv{0}{s-1}$ and $\delta \in \{-1, 0, 1\}$ such that for any ordered triple $(u, v, w)$ of $K$, $\col_2(u, v, w)=(d, \delta)$. Towards a contradiction, suppose that $\delta \in \{-1, 1\}$.
Let $u_0 \prec u_1 \prec \dots \prec u_{s-1}$ be the vertices of $K$.

Consider, for any $0 < i < j < s$, the vertex $x_{i, j} := x(u_0, u_{i}, u_{j})$.
By definition $x_{i, j}$ is at distance $d(u_0, u_i, u_j) = d$ from $u_0$ along $P_{u_0, u_i}$ and at distance $d'(u, u_{i}, u_{j})$ from $u_0$ along $P_{u_0, u_{j}}$.
Note that since $d(u_0, u_i, u_j)$ does not depend on $j$ (as in $K$ it is equal to $d$), we have that $x_{i, j} = x_{i, j'}$ for any pair $j, j'$. In particular, in the case where $d = 0$, then any $x_{i, j}$ is the vertex $u_0$ and we reach an immediate contradiction with the value of $\delta$. 

Consider three integers $i,j,k$ with  $0\le i < j < k<s$.
We know $x_{i, j}$ is at distance $d'_{i, j}$ from $u_0$ along $P_{u_0, u_j}$, and $x_{j, k}$ is at distance $d$ from $u_0$ along $P_{u_0, u_j}$.

\textbf{Case $\delta = -1$:}
We have $d < d'_{i, j}$, hence $x_{j, k} \prec x_{i, j}$.
And since $x_{i, j} = x_{i, k}$, we have $x_{i, k} \prec x_{j, k}$.
Since $d>0$ we obtain $u_0 \prec x_{1, s-1} \prec \dots \prec x_{s-2, s-1} \prec u_{s-1}$. All these vertices lie on the path $P_{u_0, u_{s-1}}$, which is thus of order at least $s$, a~contradiction.

\textbf{Case $\delta = 1$:}
We have $d > d'_{i, j}$, hence $x_{i, j} \prec x_{j, k}$.
And since $x_{i, j} = x_{i, k}$. 
Hence by the same argument we have $u_0 \prec x_{s-2, s-1} \prec \dots \prec x_{1, s-1} \prec u_{s-1}$. All these vertices lie on the path $P_{u_0, u_{s-1}}$, which is thus of order at least $s$, a~contradiction.
\end{proof}

Symmetrically we define $x^{\rev}(u,v,w)$ as the smallest common vertex of $P_{u,w}$ and $P_{v,w}$ and $d^\rev(u,v,w)$ (resp.\ $d'^\rev(u,v,w)$) as the distance between this vertex and $w$ on $P_{v,w}$ (resp. $P_{u,w}$). 
For every ordered triple $(u, v, w)$ of $V(G)$, let \[\col_2^\rev(u, v, w) = (d^\rev(u, v, w) , \delta^\rev(u, v, w)),\] where $\delta^\rev$ is defined analogously as $\delta$:\[
\delta^\rev(u, v, w) = \begin{cases}
			-1, & \text{if } d^\rev(u,v,w) < d'^\rev(u, v, w)\\
            0, & \text{if } d^\rev(u,v,w) = d'^\rev(u, v, w)\\
            1, & \text{if } d^\rev(u,v,w) > d'^\rev(u, v, w).
		 \end{cases}
\]
A proof symmetric to that of \Cref{lem:split} yields the following statement.

\begin{lemma}\label{lem:split-rev}
If $K$ is a monochromatic 3-clique for $\col_2^\rev$ of order at least $s$, then there exists an integer $d^\rev < s$ such that such that  for any ordered triple $(u, v, w)$ of $K$ we have $\col_2^\rev(u,v,w) = (d^\rev, 0)$.
\end{lemma}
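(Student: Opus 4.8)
The plan is to deduce \Cref{lem:split-rev} from \Cref{lem:split} by reversing the ground ordering, rather than replaying the argument. Let $\prec'$ denote the reverse of $\prec$ (so $a\prec' b$ iff $b\prec a$). First I would observe that the whole setup of \Cref{sec:proof} is invariant under this reversal: a path is increasing with respect to $\prec'$ exactly when its reverse is increasing with respect to $\prec$, so the standing assumption that every increasing induced path of $G$ has fewer than $s$ vertices is unchanged, the choice of $s$ depends only on $n$ and $t$, and for any two vertices $a,b$ the (undirected) path $P_{a,b}$ can be taken to be the same object whether we regard it as a $\prec$-increasing induced path of maximum order from the smaller to the larger endpoint, or as a $\prec'$-increasing one.

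Next I would check that the reversed quantities match the original ones under the correspondence $(u,v,w)\mapsto(w,v,u)$ between $\prec$-ordered triples and $\prec'$-ordered triples. If $u\prec v\prec w$ then $w\prec' v\prec' u$, and $P_{u,w}=P_{w,u}$, $P_{v,w}=P_{w,v}$. The $\prec$-smallest common vertex of $P_{u,w}$ and $P_{v,w}$ is the $\prec'$-largest common vertex of $P_{w,u}$ and $P_{w,v}$, so $x^\rev(u,v,w)$ computed with respect to $\prec$ equals $x(w,v,u)$ computed with respect to $\prec'$; its distance to $w$ along $P_{v,w}$ (resp.\ $P_{u,w}$) is exactly its distance to the first vertex $w$ along the $\prec'$-increasing path $P_{w,v}$ (resp.\ $P_{w,u}$), so $d^\rev(u,v,w)$ and $d'^\rev(u,v,w)$ with respect to $\prec$ equal $d(w,v,u)$ and $d'(w,v,u)$ with respect to $\prec'$. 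Consequently $\delta^\rev$ matches $\delta$ and $\col_2^\rev(u,v,w)$ with respect to $\prec$ equals $\col_2(w,v,u)$ with respect to $\prec'$.

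Finally, a set $K\subseteq V(G)$ that is a monochromatic $3$-clique for $\col_2^\rev$ (with respect to $\prec$) is, by the previous paragraph, a monochromatic $3$-clique for $\col_2$ when $G$ is equipped with the ordering $\prec'$. Applying \Cref{lem:split} to $(G,\prec')$ then yields an integer $d^\rev<s$ with $\col_2^\rev(u,v,w)=(d^\rev,0)$ for every ordered triple $(u,v,w)$ of $K$, which is exactly the claim.

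I expect the only delicate point to be confirming that the proof of \Cref{lem:split}, and the whole construction it relies on, uses nothing about $\prec$ that is not preserved under reversal. An entirely safe alternative, requiring no such bookkeeping, is to copy the proof of \Cref{lem:split} verbatim while fixing the basepoint $u_{s-1}$ instead of $u_0$, measuring all distances from $u_{s-1}$, and reversing each inequality; one again obtains a monotone chain of $s$ distinct vertices lying on $P_{u_0,u_{s-1}}$, forcing an increasing induced path of order at least $s$, a contradiction.
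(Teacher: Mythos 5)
Your proposal is correct and matches the paper's intent: the paper simply states that ``a proof symmetric to that of Lemma~\ref{lem:split}'' yields the result, and your formalization of that symmetry via the reversed ordering $\prec'$ (together with the correspondence $(u,v,w)\mapsto(w,v,u)$ identifying $\col_2^\rev$ with $\col_2$) is exactly the right way to make it rigorous. Your fallback of replaying the proof of Lemma~\ref{lem:split} anchored at $u_{s-1}$ with reversed inequalities is also valid and is the level of detail the paper leaves implicit.
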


\begin{remark}\label{rem:col2}
Since the paths $P_{u,v}$ have order less than $s$, $\col_2$ and $\col_2^\rev$ each take at most $3s$ values.
\end{remark}

In our proofs below it will be convenient to discard the largest and smallest vertices of a monochromatic 3-clique $K$, so we define the \emph{interior} $K^\circ$ of $K$ as $K$ minus its largest and  smallest vertices.

\begin{lemma}\label{lem:u+-}
Let $K$ be a monochromatic 3-clique of order at least $s$ for $\col_1 \times \col_2 \times \col_2^{\rev}$. Let $(0, (d , 0) , (d^{\rev} , 0))$ be the unique color\footnote{The fact that  the colors of $\col_2$ and $\col_2^{\rev}$ are of this form is a direct consequence of \Cref{lem:split} and \Cref{lem:split-rev}.} of all hyperedges of $K$.
Then for any vertex $v \in V(K^\circ)$ there are two vertices $v^-,v^+\in V(G)$ such that the following holds:
\begin{enumerate}
    \item\label{it:+-order} for any $v\in K^\circ$,  $v^- \preceq v \preceq v^+$,
    \item\label{it:com_vertex+} for any  $u \prec v \prec w$ in $K^\circ$, we have that $u^+ = x(u, v, w)$, i.e., $u^+$ is the largest common vertex of $P_{u,v}$ and $P_{u, w}$ (and it lies at distance $d$ from $u$ on both paths), 
    \item\label{it:com_vertex-} for any  $u \prec v \prec w$ in $K^\circ$ we have that $w^- = x^\rev(u,v,w)$, i.e., $w^-$ is the smallest common vertex of $P_{v,w}$ and $P_{u, w}$ (and it lies at distance $d^\rev$ from $w$ on both paths),  
    \item\label{it:ordering} for any $u \prec v$ in $K^\circ$, we have $u \preceq u^+ \prec v^- \preceq v$.
\end{enumerate}
\end{lemma}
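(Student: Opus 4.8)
The idea is to define, for each $v \in K^\circ$, the vertices $v^+$ and $v^-$ by looking at a single auxiliary triple witnessing $v$'s "role" as a first or last coordinate, and then show all the desired properties follow by comparing triples that share two vertices. Concretely, fix $v \in K^\circ$. Since $v$ is in the interior, there exist $a \prec v \prec b$ in $K$ (indeed in $K^\circ$ if we are slightly careful, or we can use the extreme vertices of $K$ as $a, b$). Define $v^+ := x(v, w_1, w_2)$ for any two vertices $w_1 \prec w_2$ of $K$ that are both $\succ v$ — I must first check this is well-defined, i.e., independent of the choice of $w_1, w_2$. This is exactly the observation used inside the proof of \Cref{lem:split}: the vertex $x(v, w_1, w_2)$ lies at distance $d(v,w_1,w_2) = d$ from $v$ on $P_{v,w_1}$, and $d$ does not depend on the third coordinate of the triple, so $x(v,w_1,w_2)$ is the vertex at distance $d$ from $v$ along $P_{v,w_1}$, which depends only on $v$ and $w_1$; and because $\delta = 0$ the symmetric statement along $P_{v,w_2}$ identifies the same vertex, giving independence from $w_1$ as well (compare $x(v,w_1,w_3)$ and $x(v,w_2,w_3)$). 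Symmetrically define $v^- := x^\rev(w_0, w_1, v)$ for any $w_0 \prec w_1 \prec v$ in $K$, well-defined by the mirror of this argument using $d^\rev$ and $\delta^\rev = 0$.

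Once well-definedness is established, the four numbered properties are almost immediate. Property \eqref{it:com_vertex+}: given $u \prec v \prec w$ in $K^\circ$, by the definition of $u^+$ applied with the pair $\{v, w\}$ we get $u^+ = x(u,v,w)$, and it lies at distance $d$ from $u$ on both $P_{u,v}$ and $P_{u,w}$ (the latter because $\delta = 0$). Property \eqref{it:com_vertex-} is the mirror statement. Property \eqref{it:+-order}: $v^+$ lies on $P_{v,w}$ which is increasing from $v$, so $v \preceq v^+$; similarly $v^- \preceq v$ (it lies on a path increasing up to $v$). Property \eqref{it:ordering} is the one requiring a genuine (though short) argument: for $u \prec v$ in $K^\circ$ we need $u^+ \prec v^-$. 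Pick, using that $K$ has order at least $s \ge 4$ and $u,v \in K^\circ$, a vertex $w \succ v$ in $K$. Then $u^+ = x(u,v,w)$ lies at distance $d$ from $u$ on $P_{u,w}$ and $v^- = x^\rev(u,v,w)$ lies at distance $d^\rev$ from $w$ on $P_{u,w}$; since $P_{u,w}$ has order less than $s$ and contains the (at least two, since $|P_{u,v}| = \ell \ge 2$ and similarly for the other segments — here I use \Cref{rmk:common-order}) internally-disjoint-ish structure... more precisely, $u^+$ lies on the subpath of $P_{u,w}$ between $u$ and $v$, while $v^-$ lies on the subpath between $v$ and $w$, and since $v \in K^\circ$ has $u \prec v \prec w$ these two subpaths meet only at $v$. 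Hence $u \preceq u^+ \preceq v$ and $v \preceq v^- \preceq w$, so $u^+ \preceq v \preceq v^-$; it remains to upgrade $\preceq$ to $\prec$, i.e., to rule out $u^+ = v$ or $v^- = v$, which would force $P_{u,v}$ or $P_{v,w}$ to be a subpath of $P_{u,w}$ — I would derive a contradiction with $\ell \ge 2$ and the structure of the distances $d, d^\rev$, or simply with the maximality in the definition of $P_{u,w}$. (If $u^+ = v$, then $P_{u,v}$, a path of order $\ell$, would be a sub-path of $P_{u,w}$, but then the portion of $P_{u,w}$ from $v$ to $w$ is an increasing induced path from $v$ to $w$, which together with $P_{u,v}$ being induced would need care — this is the delicate point.)

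The main obstacle I anticipate is precisely this last strict-inequality step in \eqref{it:ordering} and making the "these subpaths meet only at $v$" claim fully rigorous: one must argue that $x(u,v,w)$, defined via $P_{u,v}$ and $P_{u,w}$, really does lie weakly before $v$ on $P_{u,w}$, which uses that $P_{u,v}$ and $P_{u,w}$ agree up to distance $d$ and that $v$ itself, being the endpoint of $P_{u,v}$, sits at position $\ell - 1 > d$ (one needs $d < \ell - 1$, which should follow since otherwise $P_{u,v}$ would be contained in $P_{u,w}$ up to $v$, again forcing a too-long induced path or contradicting maximality). I would isolate this as a short sub-claim ("$d < \ell - 1$ and $d^\rev < \ell - 1$") proved from \Cref{rmk:common-order} plus the no-long-induced-path assumption, and then everything else is bookkeeping.
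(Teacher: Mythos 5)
Your definitions of $v^+$ and $v^-$ and the well-definedness argument are the same as the paper's (the paper defines $v^+$ as the vertex at distance $d$ from $v$ along $P_{v,w}$ and $v^-$ at distance $d^{\rev}$ from $v$ along $P_{u,v}$; yours via $x(v,w_1,w_2)$ and $x^{\rev}(w_0,w_1,v)$ coincide with these, and both use the same monochromaticity to get independence of the auxiliary vertices). Items \eqref{it:+-order}--\eqref{it:com_vertex-} are handled the same way. So the overall route is identical to the paper's; the place to scrutinize is item \eqref{it:ordering}, where you have a real confusion.

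In your argument for \eqref{it:ordering} you write ``$v^- = x^{\rev}(u,v,w)$ lies at distance $d^{\rev}$ from $w$ on $P_{u,w}$.'' But by your own definition (and by item \eqref{it:com_vertex-}), $x^{\rev}(u,v,w)$ is $w^-$, not $v^-$: it sits at distance $d^{\rev}$ from $w$, while $v^-$ sits at distance $d^{\rev}$ from $v$ on $P_{u,v}$ and satisfies $v^-\preceq v$, not $v\preceq v^-$. The chain ``$u^+\preceq v\preceq v^-$'' you derive is actually ``$u^+\preceq v\preceq w^-$,'' which is a different statement. What item \eqref{it:ordering} needs is $u^+\prec v^-$, and the clean way to see it (the paper's way) is to note that both $u^+$ and $v^-$ lie on $P_{u,v}$, at positions $d$ and $\ell-1-d^{\rev}$ respectively (with $\ell=|P_{u,v}|$ as in \Cref{rmk:common-order}); so the whole of item \eqref{it:ordering} reduces to the single inequality $d+d^{\rev}<\ell-1$. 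Your observation on $P_{u,w}$ (positions $d$ and $\ell-1-d^{\rev}$ of $u^+$ and $w^-$, with $u^+\preceq v\preceq w^-$) does yield $d+d^{\rev}\le\ell-1$, which is a useful first half. However the sub-claim you propose to finish with, ``$d<\ell-1$ and $d^{\rev}<\ell-1$,'' is not by itself enough to conclude $d+d^{\rev}<\ell-1$ (take $d=d^{\rev}=\lceil(\ell-1)/2\rceil$). You need to combine it with the $\le$ bound: if $d+d^{\rev}=\ell-1$, then $u^+$ and $w^-$ occupy the same index on $P_{u,w}$, so $u^+=w^-=v$, hence $d=\ell-1$, contradicting $d<\ell-1$. (The paper does this slightly differently: from $u^+=v=w^-$ it gets $d=d^{\rev}=\ell-1$, hence $2\ell-2=\ell-1$, i.e.\ $\ell=1$, contradiction.) So the idea is recoverable, but as written the proof of \eqref{it:ordering} confuses $v^-$ with $w^-$, never states the reduction to ``$d+d^{\rev}<\ell-1$'' with both distances measured on $P_{u,v}$, and proposes an insufficient sub-claim for the strict inequality.
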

\begin{proof}
Fix any vertex $v$ in $K^\circ$, and consider a vertex $u$ of $K$ smaller than $v$, and a vertex $w$ of $K$ larger than $v$. 
We define $v^+$ as the vertex at distance $d$ from $v$ along the path $P_{v, w}$ and $v^-$ as the vertex at distance $d^{\rev}$ from $v$ along the path $P_{u, v}$.
The fact that $v^-$ and $v^+$ do not depend on the particular choices of $u$ and $w$ directly follows from the fact that $K$ is monochromatic of color $(0, (d , 0) , (d^{\rev} , 0))$. Item \eqref{it:+-order} follows from the fact that for any $u\prec v$, $P_{u,v}$ is an increasing path (with respect to $\prec$), and thus $u \preceq u^+$ and $v^-\preceq v$. Items \eqref{it:com_vertex+} and \eqref{it:com_vertex-} are direct consequences of \Cref{lem:split,lem:split-rev}. 

It remains to prove \eqref{it:ordering}. By \Cref{rmk:common-order}, there is an integer $\ell \in \intv{2}{s-1}$ such that for any $u,v\in K$, $|P_{uv}|=\ell$. To prove \eqref{it:ordering}, it suffices to show that $d + d^{\rev} < \ell-1$.
    Let $u \prec v \prec w$ be three vertices of $K$. Recall that $|P_{u, v}|=|P_{u, w}|=\ell$.
    We know that $u^+$ is contained in $P_{u, v}$ and thus $u^+ \preceq v$, and symmetrically $v \preceq w^-$.
    As $P_{u, w}$ contains $d$ vertices before $u^+$ and $d^{\rev}$ vertices after $w^-$, we obtain $d + d^{\rev}  \leq \ell-1$.
    If equality holds, then  $u^+ = w^-$, and thus $u^+ = w^- = v$. But then the distance from $u$ to  $u^+$ in $P_{u,v}$ is $\ell-1$, so $d=\ell-1$, and symmetrically $d^\rev=\ell-1$. We obtain $2\ell-2=\ell-1$ and thus $\ell=1$, which is a contradiction since $P_{u,v}$ contains at least two vertices ($u$ and $v$).
\end{proof}

Given a pair $d,d^\rev\ge 0$ of integers and a pair $u\prec v$ of vertices of $G$ such that $|P_{u,v}| >  d+d^\rev+1$, we denote by $\Pleft_{u,v}(d,d^\rev)$ the subpath of $P_{u,v}$ starting in $u$ and ending in the vertex of $P_{u,v}$ at distance $d$ from $u$ on this path. 
Symmetrically $\Pright_{u,v}(d,d^\rev)$ is the subpath of $P_{u,v}$ starting at the vertex at distance $d^\rev$ from $v$ on this path,  and ending in~$v$.
The vertices of $P_{u,v}$ not belonging to any of these two subpaths form a subpath we call $\Pmid_{u,v}(d,d^\rev)$.
When the pair $(d,d^\rev)$ is clear from the context, we simply write $\Pleft_{u,v}$, $\Pright_{u,v}$ and $\Pmid_{u,v}$. We refer to \Cref{fig:pathP} for a depiction of these paths.

\medskip

For some  monochromatic 3-clique $K$ of order at least $s$ with respect to  the coloring $\col_1 \times \col_2 \times \col_2^{\rev}$, we say that the \emph{type of $K$} is $(\ell, d, d^\rev)$ when $(0, (d, 0), (d^\rev, 0))$ is the unique color of $K$ and $\ell$ is the order of all paths $P_{u,v}$ with $u,v\in K$ (see \Cref{rmk:common-order}).

\smallskip

Let $K$ be of type $(\ell, d, d^\rev)$. 
Observe that by Lemma \ref{lem:u+-}, for every $u\in K^\circ$, the paths of the form $\Pleft_{u,v}(d,d^\rev)$ (for $v \in K^\circ$ with $u\prec v$) all start in $u$ and end at the same vertex $u^+$ (and symmetrically for all paths of the form $\Pright_{u,v}(d,d^\rev)$).
For any $v$ in $K^\circ$, we define the \emph{territory} $\ter(v, K^\circ)$ of $v$ as follows:
\[
\ter(v,K^\circ) = %
\bigcup_{\substack{u \in K^\circ\\\text{s.t.}\ u\prec v}} \Pright_{u, v}(d,d^\rev) \cup %
\bigcup_{\substack{w\in K^\circ\\\text{s.t.}\ v \prec w}} \Pleft_{v, w}(d,d^\rev).
\]
We simply write $\ter(v)$ when  $K^\circ$ is clear from the context.

\begin{corollary}\label{cor:sum}
Let $K$ be a  monochromatic 3-clique  for $\col_1 \times \col_2 \times \col_2^{\rev}$ of size at least $s$, and let $K^\circ$ be its interior. We have the following properties:
\begin{enumerate}
    \item\label{it:ter_disjoint} Distinct vertices of $K^\circ$ have vertex-disjoint territories,
    \item\label{it:after_u+} for any $u\prec v \prec w$ in $K^\circ$,  $\Pmid_{u, v} \cup \Pright_{u, v}$ is vertex-disjoint from $\Pmid_{u, w} \cup \Pright_{u, w}$, and
    \item\label{it:before_u-} for any $u\prec v \prec w$ in $K^\circ$,  $\Pleft_{u, w} \cup \Pmid_{u, w}$ is vertex-disjoint from $\Pleft_{v, w} \cup \Pmid_{v, w}$.
\end{enumerate}
\end{corollary}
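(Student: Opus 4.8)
The plan is to deduce all three items directly from \Cref{lem:u+-} and from a single bookkeeping fact: since each $P_{u,v}$ is an increasing path and (for $u\prec v$ in $K^\circ$) its sub-path $\Pleft_{u,v}(d,d^\rev)$ runs from $u$ to $u^+$ while $\Pright_{u,v}(d,d^\rev)$ runs from $v^-$ to $v$, we get $\Pleft_{u,v}\subseteq\{z:u\preceq z\preceq u^+\}$, $\Pright_{u,v}\subseteq\{z:v^-\preceq z\preceq v\}$, moreover $\Pmid_{u,v}\cup\Pright_{u,v}$ is exactly the set of vertices of $P_{u,v}$ lying (strictly) after $u^+$, and dually $\Pleft_{u,v}\cup\Pmid_{u,v}$ is exactly the set of vertices of $P_{u,v}$ lying before $v^-$. (That the three sub-paths are non-degenerate is guaranteed by the inequality $d+d^\rev<\ell-1$ established inside the proof of \Cref{lem:u+-}, where $\ell$ is the common order from \Cref{rmk:common-order}.)

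With this dictionary in hand each item is immediate. For \eqref{it:ter_disjoint}, let $a\prec b$ be two vertices of $K^\circ$. Each $\Pright_{u,a}$ occurring in $\ter(a)$ lies in $\{z:a^-\preceq z\preceq a\}$ and each $\Pleft_{a,w}$ lies in $\{z:a\preceq z\preceq a^+\}$, so $\ter(a)\subseteq\{z:a^-\preceq z\preceq a^+\}$, and likewise $\ter(b)\subseteq\{z:b^-\preceq z\preceq b^+\}$; since item \eqref{it:ordering} of \Cref{lem:u+-} gives $a^+\prec b^-$, these two $\prec$-intervals are disjoint and hence so are $\ter(a)$ and $\ter(b)$. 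For \eqref{it:after_u+}, let $u\prec v\prec w$ be in $K^\circ$. By item \eqref{it:com_vertex+} of \Cref{lem:u+-}, $u^+=x(u,v,w)$ is the largest common vertex of $P_{u,v}$ and $P_{u,w}$, so every common vertex of these two paths is $\preceq u^+$; as $\Pmid_{u,v}\cup\Pright_{u,v}$ (resp.\ $\Pmid_{u,w}\cup\Pright_{u,w}$) consists only of vertices of $P_{u,v}$ (resp.\ $P_{u,w}$) lying after $u^+$, no vertex can belong to both, which is the claimed disjointness. Item \eqref{it:before_u-} is the mirror statement, obtained the same way from item \eqref{it:com_vertex-} of \Cref{lem:u+-}: $w^-=x^{\rev}(u,v,w)$ is the smallest common vertex of $P_{v,w}$ and $P_{u,w}$, so no vertex lying before $w^-$ on one of these paths can lie on the other.

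I do not expect a real obstacle here: once the position dictionary of the first paragraph is set up, each of the three items is a one-line consequence of \Cref{lem:u+-}. The only thing that needs a little care is the translation between the ``first $d+1$ / last $d^\rev+1$ / remaining'' descriptions of $\Pleft,\Pmid,\Pright$ along $P_{u,v}$ and the vertices $u^+,v^-$ supplied by \Cref{lem:u+-}, together with checking that the sub-paths in play are non-empty.
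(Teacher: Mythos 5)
Your proof is correct and follows essentially the same approach as the paper: item (1) is verbatim the paper's argument via the $\prec$-intervals $[u^-,u^+]$, $[v^-,v^+]$ and $u^+\prec v^-$ from \Cref{lem:u+-}\eqref{it:ordering}, and item (3) is the mirror of (2). The only small difference is in item \eqref{it:after_u+}: the paper splits the disjointness into three sub-claims ($\Pmid_{u,v}$ disjoint from $P_{u,w}$ and $\Pmid_{u,w}$ disjoint from $P_{u,v}$ via ``largest common vertex,'' plus $\Pright_{u,v}$ disjoint from $\Pright_{u,w}$ via item \eqref{it:ter_disjoint}), whereas you observe directly that both $\Pmid_{u,v}\cup\Pright_{u,v}$ and $\Pmid_{u,w}\cup\Pright_{u,w}$ sit strictly after $u^+$ on their respective paths, so any common vertex would be a common vertex of $P_{u,v}$ and $P_{u,w}$ strictly $\succ u^+$, contradicting \Cref{lem:u+-}\eqref{it:com_vertex+}. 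That is a slight streamlining (it avoids invoking item \eqref{it:ter_disjoint}) but rests on exactly the same ingredients.
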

\begin{proof}\mbox{}
\begin{itemize}
    \item[\eqref{it:ter_disjoint}] Let $u\prec v$ in $K^\circ$.
By Lemma~\ref*{lem:u+-}\eqref{it:+-order}, $u^- \preceq u\preceq u^+$.
As all the paths we consider are increasing, every vertex $z$ of a path in the definition of $\ter(u)$ is such that $u^-\preceq z \preceq u^+$ and similarly for $\ter(v)$. As $u^+\prec v^-$ (by Lemma~\ref*{lem:u+-}\eqref{it:ordering}), $\ter(u) \cap \ter(v) = \emptyset$.
\item[\eqref{it:after_u+}] Let $u\prec v\prec w$ in $K^\circ$. By definition of $u^+$ and $v^-$, $\Pmid_{u,v}$ is vertex-disjoint from $P_{u,w}$, and symmetrically for $\Pmid_{u,w}$ and $P_{u,v}$. Moreover, $\Pright_{u,v}$ is part of $\ter(v)$ and $\Pright_{u,w}$ is part of $\ter(w)$ so by the previous item, they are disjoint.
\item[\eqref{it:before_u-}] Symmetric to the proof of the previous item.
\end{itemize}
\end{proof}

We now define a new coloring $\col_3$ of the ordered triples of vertices of $G$ as follows. Consider an ordered triple $(u,v,w)$ of $V(G)$ and assume its color with respect to $\col_1 \times \col_2 \times \col_2^{\rev}$ is $(c_1, (d , \delta) , (d^\rev , \delta^\rev))$. If $c_1 \ne 0$ or $\delta \ne 0$ or $\delta^\rev \ne 0$ we set $\col_3(u,v,w)=(0,0,0)$. Otherwise we can assume that $c_1 = \delta = \delta^\rev = 0$. Since $c_1 = 0$, there is an integer $\ell \in \intv{2}{s-1}$ such that $\ell = |P_{u, v}| = |P_{u, w}| = |P_{v, w}|$ (\Cref{rmk:common-order}).
If $\ell<d+d^\rev+2$, then we again set $\col_3(u,v,w)=(0,0,0)$. Otherwise $\ell \ge d+d^\rev+2$ and for all $x, y \in \{u,v,w\}$ with $x\prec y$, the paths $\Pleft_{x,y}=\Pleft_{x,y}(d,d^\rev)$, $\Pmid_{x,y}=\Pmid_{x,y}(d,d^\rev)$, and  $\Pright_{x,y}=\Pright_{x,y}(d,d^\rev)$ are well defined. 
In this case we set $\col_3(u,v,w) = (i, a, b)$ where
$$i=\begin{cases}
			1, & \text{if there is an edge $pq\not\in P_{u,v}$ between $\Pleft_{u,w}$ and $\Pmid_{u,v} \cup \Pright_{u, v}$}\\
            2, & \text{otherwise, if there is an edge $pq\not\in P_{v,w}$ between $\Pleft_{v, w} \cup \Pmid_{v,w}$ and $\Pright_{u,w}$}\\
            3, &\text{otherwise, if there is an edge $pq$ between $\Pleft_{u,w}$ and $\Pleft_{v,w}\cup\Pmid_{v,w}$}\\
            4, &\text{otherwise, if there is an edge $pq$ between $\Pmid_{u,v}\cup \Pright_{u,v}$ and $\Pright_{u,w}$}\\
            5, &\text{otherwise, if there is an edge $pq$ between $\Pmid_{u,v}$ and $\Pmid_{v,w}$}\\
            0, & \text{otherwise.}
		 \end{cases}$$

If $i=0$ we set $a=b=0$. In the other cases, $a$ and $b$ are the respective indices of $p$ and $q$ on the considered paths, assuming $p\prec q$. For instance if $i=2$, then $a$ is the index of $p$ on $\Pleft_{v,w}\cup \Pmid_{v,w}$ and $b$ is the index of $q$ on $\Pright_{u,w}$. The condition that $pq\not\in P_{v,w}$ in the case $i=2$ is there to exclude the edge of $P_{v,w}$ which connects the last vertex of $\Pleft_{v,w}\cup \Pmid_{v,w}$ to the first vertex of $\Pright_{u,w}$ (this edge is always present). We emphasize here that $i=0$ does not mean that there are no edges between the three paths $P_{u,v}$, $P_{u,w}$ and $P_{v,w}$, simply that such edges (if they exist) do not follow the patterns described above. See \Cref{fig:6P} for an illustration of the possible adjacencies between the different subpaths appearing in the definition of $i$ above.

\begin{remark}\label{rem:col3}
The paths $P_{u,v}$ have order less than $s$, so $\col_3$ takes at most $5s^2 +1$ values.
\end{remark}

Our next goal will be to apply Ramsey's theorem for triples (Theorem \ref{th:ramseyc} with $k=3$) to the coloring $\col_1 \times \col_2 \times \col_2^\rev \times \col_3$. Assume that in the resulting monochromatic 3-clique, the color of  $\col_3$ is  $(i,a,b)$, for some indices $i\ne 0$ and $a,b$. Then for any ordered triple $(u,v,w)$ of vertices of this 3-clique, there is an edge between  one of the 3 paths relating $u,v,w$, and  another of these paths (the endpoints of this edge are denoted by $f(u,v,w)$ and $f'(u,v,w)$ in Lemma \ref{lem:edges123} below). There is a little bit of a case analysis depending on which subpaths are connected by an edge, but the cases are mostly symmetric.  We will use the next lemma  to find a copy of $K_{t,t}$ whenever $i\ne 0$. 

\medskip

Given a subset $V'$ of vertices of $G$, a function $f$ defined on ordered triples of $V'$, and two fixed vertices $v\prec w$ of $V'$, we define $f(\cdot,v,w)$ as the function $u\mapsto f(u,v,w)$, with domain $\{u\in V': u\prec v\}$. Similarly, for $u\prec w$, $f(u,\cdot,w)$ is the function $v\mapsto f(u,v,w)$, with domain $\{v\in V': u\prec v\prec w\}$. The function $f(u,v,\cdot)$ is defined similarly.

\begin{lemma}\label{lem:edges123}
Let $V'$ be a subset of $V(G)$ of size at least $2t+3$.
Let $f$ and $f'$ be two functions
mapping every ordered triple $(u_1, u_2, u_3)$ of $V'$ to a vertex of $P_{u_1, u_2} \cup P_{u_2, u_3} \cup P_{u_1, u_3}$ such that $f(u_1, u_2, u_3)f'(u_1, u_2, u_3)$ is an edge of $G$. If for all $u\prec v$ in $V'$,
\begin{enumerate}
    \item \label{it:sigma1a} $f(\cdot, u, v)$ is injective and  $f'(\cdot, u, v)$ is constant, and 
    \item \label{it:sigma2a} $f(u, \cdot , v)$ is constant and $ f'(u, \cdot , v)$ is injective,
\end{enumerate}
then $G[V]$ contains a $K_{t,t}$ subgraph.
\end{lemma}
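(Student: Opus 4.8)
The goal is to extract a $K_{t,t}$ subgraph from the combinatorial structure encoded by $f$ and $f'$: for every ordered triple $(u,v,w)$ of $V'$ we have an edge $f(u,v,w)f'(u,v,w)$ of $G$, where $f$ depends injectively on the smallest coordinate (for fixed larger two) and is constant in the middle coordinate, while $f'$ is constant in the smallest coordinate and injective in the middle one. I would first rename $V'=\{z_1 \prec z_2 \prec \cdots \prec z_m\}$ with $m \ge 2t+3$, and fix the top vertex $w := z_m$ to freeze one coordinate. Then consider the two "layers": the vertices $A := \{z_1,\dots,z_t\}$ (the $t$ smallest) and $B := \{z_{t+1},\dots,z_{2t}\}$ (the next $t$), which is possible since $m \ge 2t+3 > 2t$ (we even have two spare vertices, which is presumably why the hypothesis is $2t+3$ rather than $2t$; one spare is $w$ itself if it must be distinct from $A \cup B$, and one more gives room in the middle-coordinate argument). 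For $a \in A$ and $b \in B$ we have $a \prec b \prec w$, so the triple $(a,b,w)$ is ordered and $f(a,b,w)f'(a,b,w) \in E(G)$.

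The key step is to argue that $\{f(a,b,w) : a\in A\}$ and $\{f'(a,b,w) : b \in B\}$ are each sets of $t$ distinct vertices, and that $f(a,b,w)$ depends only on $a$ while $f'(a,b,w)$ depends only on $b$. By hypothesis \eqref{it:sigma1a} applied with the pair $b \prec w$: $f(\cdot,b,w)$ is injective, so as $a$ ranges over $A$ we get $t$ distinct values — but a priori these could depend on $b$. To kill the $b$-dependence of $f$, use hypothesis \eqref{it:sigma2a} with the pair $a \prec w$: $f(a,\cdot,w)$ is constant, so $f(a,b,w) = f(a,b',w)$ for all $b,b' \in B$; hence $f(a,b,w) =: g(a)$ depends only on $a$, and by the injectivity just noted, $g$ is injective on $A$. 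Symmetrically, \eqref{it:sigma2a} with the pair $a \prec w$ gives that $f'(a,\cdot,w)$ is injective, so $b \mapsto f'(a,b,w)$ takes $t$ distinct values over $B$; and \eqref{it:sigma1a} with the pair $b \prec w$ gives that $f'(\cdot,b,w)$ is constant, so $f'(a,b,w) =: h(b)$ depends only on $b$, and is injective on $B$. Therefore $g(a)h(b) \in E(G)$ for every $a \in A$, $b \in B$, with $g(A)$ a set of $t$ distinct vertices and $h(B)$ a set of $t$ distinct vertices.

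The last thing to check is that $g(A)$ and $h(B)$ are \emph{disjoint}, so that we really get a $K_{t,t}$ and not a smaller bipartite graph with identifications; this is where the spare vertices matter. Here one picks up the fact that $g(a) = f(a,b,w)$ lies on $P_{a,b} \cup P_{a,w} \cup P_{b,w}$ and $h(b) = f'(a,b,w)$ lies on the same union, but more can be said: if $g(a) = h(b)$ for some $a,b$, then replacing $b$ by another $b'' \in B$ keeps $g(a)$ fixed but changes $h$, and one derives that one of $g$, $h$ fails injectivity, or that an edge is a loop — a contradiction. (Alternatively, and more cleanly, one enlarges $A$ or $B$ by one vertex using the slack $m\ge 2t+3$ and runs a short counting argument: a vertex of $g(A)$ can coincide with at most one vertex of $h(B)$ by injectivity of $h$, and vice versa, so discarding at most one element from each side yields disjoint sets of size $\ge t$ — but then one needs $|A|,|B|\ge t+1$, i.e. $m \ge 2t+3$ counting $w$.) This disjointness argument is the main obstacle: the injectivity/constancy bookkeeping in the previous paragraph is essentially formal, but ensuring the two sides of the bipartition don't overlap requires using the hypotheses once more in a slightly less mechanical way, and is the reason the statement asks for $|V'| \ge 2t+3$ rather than exactly $2t$. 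Once disjointness is in hand, $G[V]$ (equivalently $G[V']$) contains the complete bipartite graph between $g(A)$ and $h(B)$, i.e. a $K_{t,t}$ subgraph, completing the proof.
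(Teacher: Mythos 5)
Your proof is correct and follows essentially the same route as the paper's: fix the largest vertex, use constancy in the middle coordinate to show $f$ depends only on the first coordinate and $f'$ only on the middle one, then use injectivity in the remaining coordinates to get two sets of $t$ distinct vertices spanning a complete bipartite graph.

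One small correction on a side remark. Disjointness of $g(A)$ and $h(B)$ is not "the main obstacle" and needs no extra machinery: since $g(a)h(b)$ is an edge of a loopless graph for \emph{every} $a\in A$, $b\in B$, no $g(a)$ can equal any $h(b)$, so the two $t$-element sets are automatically disjoint. Your suspicion that the slack in the hypothesis ($|V'|\geq 2t+3$ rather than $2t+1$) is there to salvage disjointness is therefore unfounded; the threshold $2t+3$ is simply what is available at the point where the lemma is applied (the interior $K^\circ$ of a monochromatic $3$-clique of size $\max(s,2t+5)$) and is stated uniformly across the three symmetric variants (Lemmas \ref{lem:edges123}, \ref{lem:edges321}, \ref{lem:edges132}); in each variant only one of the three "spare" vertices is actually used as the frozen coordinate.
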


\begin{proof}
We consider $2t+3$ elements $X=\{x_i:1\le i \le t\}$, $Y=\{y_i:1\le i \le t\}$, and $Z=\{z_1,z_2,z_3\}$ of $V'$ ordered as follows:
\[
z_1 \prec x_1 \prec \dots \prec x_{t} \prec z_2 \prec y_1 \prec \dots \prec y_{t} \prec z_3. 
\]
 
 Observe that for any $x\in X$ and $y\in Y$, we have $x\prec y \prec z_3$.

By assumption \eqref{it:sigma1a}, for every fixed $y\in Y$, the values taken by $x \mapsto f(x, y, z_3)$ with $x\in X$ are all distinct. For every $i,j\in [t]$ we call $a_{i,j}$ the value taken by $f(x_i, y_j, z_3)$. Observe also that $x \mapsto f'(x, y_j, z_3)$ is constant, equal to some vertex we call $b_j$.

By assumption \eqref{it:sigma2a}, for every fixed $x\in X$ the function $y \mapsto f(x, y, z_3)$ is constant for $y\in Y$ so actually $a_{i,j} = a_{i,j'}$ for every $j,j'\in[t]$. So we simply call this vertex $a_i$ from now on.
Moreover, $y \mapsto f'(x, y, z_3)$ is injective for any fixed $x$, so the vertices $b_j$'s defined above are all distinct.

We have obtained two $t$-element subsets $\{a_1, \dots, a_t\}$ and $\{b_1, \dots, b_t\}$ of  vertices of $G$  and by our assumptions on $f$ and $f'$, $a_ib_j$ is an edge for every $i,j\in [t]$ (so in particular these sets are disjoint). It follows that $G$ contains $K_{t,t}$ as a subgraph.  
\end{proof}

We will also need the following symmetric versions of Lemma \ref{lem:edges123}  (with nearly identical proofs, replacing only $z_3$ by $z_2$ or $z_1$ depending on which vertex in the ordered triple is fixed in \eqref{it:sigma1a} and \eqref{it:sigma2a}). We could have stated all three versions at once in a single lemma taking all permutations of $\{1,2,3\}$ into account but we found that the statement was harder to parse for the reader.

\begin{lemma}\label{lem:edges321}
Let $V'$ be a subset of $V(G)$ of size at least $2t+3$.
Let $f$ and $f'$ be two functions
mapping every ordered triple $(u_1, u_2, u_3)$ of $V'$ to a vertex of $P_{u_1, u_2} \cup P_{u_2, u_3} \cup P_{u_1, u_3}$ such that $f(u_1, u_2, u_3)f'(u_1, u_2, u_3)$ is an edge of $G$. If for all $u\prec v$ in $V'$,
\begin{enumerate}
    \item \label{it:sigma1b} $f(u, v,\cdot)$ is injective and  $f'(u, v,\cdot)$ is constant, and 
    \item \label{it:sigma2b} $f(u, \cdot , v)$ is constant and $ f'(u, \cdot , v)$ is injective,
\end{enumerate}
then $G[V]$ contains a $K_{t,t}$ subgraph.
\end{lemma}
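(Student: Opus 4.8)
The plan is to mimic the proof of Lemma~\ref{lem:edges123} almost verbatim, changing only which of the three vertices of the ordered triple is held fixed. In Lemma~\ref{lem:edges123} the vertex $z_3$ (the largest) was kept fixed because the injectivity/constancy hypotheses were stated for the slots $f(\cdot,u,v)$ and $f(u,\cdot,v)$, i.e.\ for varying the \emph{first} and \emph{second} coordinate. Here the hypotheses are instead about $f(u,v,\cdot)$ (varying the \emph{third} coordinate) and $f(u,\cdot,v)$ (varying the \emph{second} coordinate), so now it is the \emph{smallest} vertex of the triple that should be fixed. Accordingly I would take the same $2t+3$ elements $X=\{x_i:1\le i\le t\}$, $Y=\{y_i:1\le i\le t\}$, $Z=\{z_1,z_2,z_3\}$ of $V'$, but now ordered so that $z_1$ precedes everything:
\[
z_1 \prec x_1 \prec \dots \prec x_t \prec z_2 \prec y_1 \prec \dots \prec y_t \prec z_3 ,
\]
and consider the triples $(z_1, x_i, y_j)$ for $i,j\in[t]$ (note $z_1 \prec x_i \prec y_j$ always holds).

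Next I would run the same extraction of a balanced biclique. Fix $i$ and vary the third coordinate over $Y$: by hypothesis~\eqref{it:sigma1b} applied to $u=z_1$, $v=x_i$, the map $y\mapsto f(z_1,x_i,y)$ is injective, so as $j$ ranges over $[t]$ the vertices $f(z_1,x_i,y_j)$ are pairwise distinct, while $y\mapsto f'(z_1,x_i,y)$ is constant, equal to some vertex $a_i$. Then fix $j$ and vary the second coordinate over $X$: by hypothesis~\eqref{it:sigma2b} applied to $u=z_1$, $v=y_j$, the map $x\mapsto f(z_1,x,y_j)$ is constant, so $f(z_1,x_i,y_j)$ does not depend on $i$; call it $b_j$. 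Also $x\mapsto f'(z_1,x,y_j)$ is injective, so the $a_i$ defined above (which equal $f'(z_1,x_i,y_j)$ for any fixed $j$) are pairwise distinct. We thus get two $t$-element vertex sets $\{a_1,\dots,a_t\}$ and $\{b_1,\dots,b_t\}$ with $a_ib_j\in E(G)$ for all $i,j$, whence $G[V']$ (and therefore $G[V]$) contains a $K_{t,t}$ subgraph.

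The one point needing a small amount of care is the order in which the two hypotheses are applied and the consistency of the resulting labels: one must make sure that after both reductions the vertex called $b_j$ really is $f(z_1,x_i,y_j)$ for \emph{every} $i$ and that the vertex called $a_i$ really is $f'(z_1,x_i,y_j)$ for \emph{every} $j$, so that the edge relation $f\,f'$ translates into $a_ib_j\in E(G)$ for the full $t\times t$ grid; this is exactly the bookkeeping carried out in the proof of Lemma~\ref{lem:edges123} and transfers without change once $z_1$ replaces $z_3$. I do not anticipate any genuine obstacle here — the lemma is, as the authors note, just the permuted twin of Lemma~\ref{lem:edges123}, and the proof is "nearly identical, replacing only $z_3$ by $z_1$."
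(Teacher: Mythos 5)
Your proposal is correct and matches the paper's intended proof: the authors state that Lemma~\ref{lem:edges321} has a nearly identical proof to Lemma~\ref{lem:edges123}, \say{replacing only $z_3$ by $z_2$ or $z_1$ depending on which vertex in the ordered triple is fixed}, and you correctly identified that since hypotheses~\eqref{it:sigma1b} and~\eqref{it:sigma2b} vary the second and third coordinates, the first (smallest) vertex $z_1$ is the one to fix. The bookkeeping with the triples $(z_1,x_i,y_j)$, the definitions of $a_i$ and $b_j$, and the verification that $a_ib_j\in E(G)$ for all $i,j$ are all carried out as in the proof of Lemma~\ref{lem:edges123}.
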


\begin{lemma}\label{lem:edges132}
Let $V'$ be a subset of $V(G)$ of size at least $2t+3$.
Let $f$ and $f'$ be two functions
mapping every ordered triple $(u_1, u_2, u_3)$ of $V'$ to a vertex of $P_{u_1, u_2} \cup P_{u_2, u_3} \cup P_{u_1, u_3}$ such that $f(u_1, u_2, u_3)f'(u_1, u_2, u_3)$ is an edge of $G$. If for all $u\prec v$ in $V'$,
\begin{enumerate}
    \item \label{it:sigma1c} $f(\cdot,u, v)$ is injective and  $f'(\cdot,u, v)$ is constant, and 
    \item \label{it:sigma2c} $f(u, v, \cdot)$ is constant and $ f'(u, v,\cdot)$ is injective,
\end{enumerate}
then $G[V]$ contains a $K_{t,t}$ subgraph.
\end{lemma}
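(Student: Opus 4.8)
The plan is to observe that Lemmas~\ref{lem:edges321} and~\ref{lem:edges132} differ from Lemma~\ref{lem:edges123} only in which coordinate of the ordered triple plays the ``anchor'' role, and to adapt the proof of Lemma~\ref{lem:edges123} accordingly. In Lemma~\ref{lem:edges123}, the vertex $z_3$ (the largest element of $Z$) was fixed in both coordinates where the hypotheses \eqref{it:sigma1a}, \eqref{it:sigma2a} were applied: hypothesis \eqref{it:sigma1a} fixes the pair $(u,v)$ equal to $(y,z_3)$ and varies the first coordinate over $X$, while hypothesis \eqref{it:sigma2a} fixes $(u,w)$ equal to $(x,z_3)$ and varies the middle coordinate over $Y$. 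The common fixed vertex is exactly the maximal vertex of $Z$, and that is why the layout $z_1 \prec x_1 \prec \cdots \prec x_t \prec z_2 \prec y_1 \prec \cdots \prec y_t \prec z_3$ works: every relevant triple has the form $(x_i, y_j, z_3)$ with $x_i \in X$, $y_j \in Y$.

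For Lemma~\ref{lem:edges321}, hypothesis \eqref{it:sigma1b} fixes the first two coordinates $(u,v)$ and varies the third, while hypothesis \eqref{it:sigma2b} fixes the first and third coordinates $(u,w)$ and varies the middle one. Here the common fixed vertex is the minimal vertex of the triple, so I would keep the same ordering $z_1 \prec x_1 \prec \cdots \prec x_t \prec z_2 \prec y_1 \prec \cdots \prec y_t \prec z_3$ but work with triples of the form $(z_1, x_i, y_j)$: for each fixed $x_i \in X$, the map $y \mapsto f(z_1, x_i, y)$ is injective over $y \in Y$ by \eqref{it:sigma1b} (taking $(u,v) = (z_1, x_i)$), and $y \mapsto f'(z_1, x_i, y)$ is constant; for each fixed $y_j \in Y$, the map $x \mapsto f(z_1, x, y_j)$ is constant over $x \in X$ by \eqref{it:sigma2b} (taking $(u,w) = (z_1, y_j)$), and $x \mapsto f'(z_1, x, y_j)$ is injective. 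Running the identical combinatorial argument, one extracts $a_i := f(z_1, x_i, y_j)$ (independent of $j$) and $b_j := f'(z_1, x_i, y_j)$ (independent of $i$ and all distinct as $j$ varies), with $a_i b_j \in E(G)$ for all $i,j$, hence a $K_{t,t}$. For Lemma~\ref{lem:edges132}, hypothesis \eqref{it:sigma1c} fixes coordinates one and two and varies the third, while \eqref{it:sigma2c} fixes coordinates one and two and varies the third as well --- wait, re-reading: \eqref{it:sigma1c} has $f(\cdot, u, v)$ (first coordinate varies, last two fixed) and \eqref{it:sigma2c} has $f(u, v, \cdot)$ (last coordinate varies, first two fixed); the common fixed position is the middle coordinate, so I would use triples of the form $(x_i, z_2, y_j)$: for fixed $y_j$, the map $x \mapsto f(x, z_2, y_j)$ is injective and $x\mapsto f'(x,z_2,y_j)$ constant by \eqref{it:sigma1c} with $(u,v)=(z_2,y_j)$; for fixed $x_i$, the map $y\mapsto f(x_i, z_2, y)$ is constant and $y\mapsto f'(x_i,z_2,y)$ injective by \eqref{it:sigma2c} with $(u,v)=(x_i,z_2)$; and again the same extraction yields $K_{t,t}$.

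In all three cases the size requirement $|V'| \ge 2t+3$ is exactly what is needed to house $X$, $Y$, and the three-element anchor set $Z$, and the ordering constraints among $X$, $Y$, $Z$ are chosen precisely so that every triple invoked lies in the correct order $u \prec v \prec w$. The only genuine bookkeeping is to check, in each of the two new lemmas, that the particular choice of which element of $Z$ is the anchor and where it sits relative to $X$ and $Y$ makes every invoked triple admissible; this is the step I would be most careful about, but it is routine once the pattern is recognized. I would therefore state the proofs of Lemmas~\ref{lem:edges321} and~\ref{lem:edges132} as ``the proof is identical to that of Lemma~\ref{lem:edges123}, using triples $(z_1, x_i, y_j)$ (resp.\ $(x_i, z_2, y_j)$) in place of $(x_i, y_j, z_3)$'' and spell out the one-line verification that the hypotheses apply to these triples. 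No new idea beyond Lemma~\ref{lem:edges123} is required; the main (very mild) obstacle is purely notational, namely keeping straight which coordinate is frozen in each hypothesis.
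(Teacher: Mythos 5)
Your proposal is correct and follows exactly the route the paper itself indicates: the paper states that Lemmas~\ref{lem:edges321} and~\ref{lem:edges132} have proofs ``nearly identical'' to Lemma~\ref{lem:edges123}, ``replacing only $z_3$ by $z_2$ or $z_1$ depending on which vertex in the ordered triple is fixed.'' You correctly identified the anchor for Lemma~\ref{lem:edges132} as the middle vertex $z_2$ (with triples $(x_i, z_2, y_j)$) and verified that hypotheses \eqref{it:sigma1c}, \eqref{it:sigma2c} apply, so the extraction of $a_i := f(x_i, z_2, y_j)$ and $b_j := f'(x_i, z_2, y_j)$ yields a $K_{t,t}$ just as in the original proof.
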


We deduce the following.

\begin{lemma}\label{lem:no_edge}
Let $K$ be  a monochromatic 3-clique for $\col_1 \times \col_2 \times \col_2^\rev \times \col_3$. If $K$ has order at least $\max(s, 2t+5)$ then $G$ has a $K_{t, t}$ subgraph or $\col_3$ is equal to (0,0,0) on $K$.
\end{lemma}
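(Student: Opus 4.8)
The strategy is to rule out each nonzero value of the first coordinate $i$ of $\col_3$ on a large monochromatic $3$-clique $K$, by showing that if $i\in\{1,2,3,4,5\}$ is the common value then $G$ contains a $K_{t,t}$ subgraph, via one of Lemmas~\ref{lem:edges123}, \ref{lem:edges321}, \ref{lem:edges132}. Assume $K$ is monochromatic for $\col_1\times\col_2\times\col_2^\rev\times\col_3$ of order at least $\max(s,2t+5)$, with $\col_3$-color $(i,a,b)$ and $i\neq 0$; we aim for a contradiction (a $K_{t,t}$ subgraph). Since $|K|\ge s$, by Lemmas~\ref{lem:sizes}, \ref{lem:split}, \ref{lem:split-rev} the color of $\col_1\times\col_2\times\col_2^\rev$ is $(0,(d,0),(d^\rev,0))$, so $K$ has a well-defined type $(\ell,d,d^\rev)$, and all the subpaths $\Pleft$, $\Pmid$, $\Pright$ appearing in the definition of $\col_3$ are defined for triples in $K$ (in particular $\ell\ge d+d^\rev+2$, since otherwise $\col_3=(0,0,0)$). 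Pass to $K^\circ$ if convenient; it still has order at least $2t+3$.

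\textbf{Setting up $f,f'$.} For a triple $(u,v,w)$ in $K$ with $\col_3$-color $(i,a,b)$, the index $i$ tells us precisely which pair of subpaths carries a prescribed edge $pq$ (with $p\prec q$), and $a,b$ are the positions of $p,q$ on those two subpaths. Define $f(u,v,w)=p$ and $f'(u,v,w)=q$ (the endpoint on the "left" subpath in the list and the one on the "right" subpath, respectively). The key point is that, because $a$ and $b$ are \emph{fixed} across $K$ and because of Lemma~\ref{lem:u+-} (which identifies the endpoints $u^+$, $w^-$ of the subpaths independently of the third vertex), the vertex $p$ (resp.\ $q$) depends only on a proper subset of $\{u,v,w\}$: namely the subpath on which it lies is determined by at most two of the three vertices, and its position on that subpath is the constant $a$ (resp.\ $b$). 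For instance for $i=1$, $p$ lies on $\Pleft_{u,w}$, which by Lemma~\ref{lem:u+-}\eqref{it:com_vertex+} depends only on $u$ (its endpoints are $u$ and $u^+$), so $f(u,v,w)$ depends only on $u$; while $q$ lies on $\Pmid_{u,v}\cup\Pright_{u,v}$, which depends only on $u,v$, so $f'(u,v,w)$ depends only on $u,v$. One then checks that for fixed $u\prec v$, $f(\cdot,u,v)$ is constant (it is the position-$a$ vertex of $\Pleft_{\cdot\,,v}$... wait, one must be careful which slot each variable occupies), and the relevant restrictions are injective as functions of the remaining variable — injectivity holds because distinct choices of that variable give subpaths that are vertex-disjoint by Corollary~\ref{cor:sum}, hence their position-$a$ (or position-$b$) vertices are distinct. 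Matching the dependency pattern to the hypotheses of Lemma~\ref{lem:edges123}, \ref{lem:edges321}, or \ref{lem:edges132} then yields $K_{t,t}$.

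\textbf{The case analysis.} I would organize this as five cases, $i=1,\dots,5$, each time: (a) identify, using Lemma~\ref{lem:u+-}, which variables $p$ and $q$ depend on; (b) invoke the appropriate disjointness statement from Corollary~\ref{cor:sum}\eqref{it:ter_disjoint}--\eqref{it:before_u-} to get injectivity in the free variable; (c) read off which of the three lemmas applies. Cases $i=1$ and $i=4$ should be symmetric (via reversing the order), as should $i=2$ and $i=3$; case $i=5$ (an edge between $\Pmid_{u,v}$ and $\Pmid_{v,w}$) is the one where $p$ depends on $u,v$ and $q$ depends on $v,w$, so fixing the "outer" pair makes one of $f,f'$ constant while the middle vertex $v$ ranges — here Lemma~\ref{lem:edges321} or \ref{lem:edges132} is the right one. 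In each case the disjointness needed is exactly the one isolated in Corollary~\ref{cor:sum}: e.g. for $i=1$, $\Pmid_{u,v}\cup\Pright_{u,v}$ for distinct $v$ are disjoint by item~\eqref{it:after_u+}, giving injectivity of $f'$ in $v$; for $i=5$ one uses items~\eqref{it:after_u+} and~\eqref{it:before_u-} together. The conclusion is that $i\neq 0$ is impossible, i.e.\ $\col_3=(0,0,0)$ on $K$, unless $G$ already has a $K_{t,t}$ subgraph.

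\textbf{Main obstacle.} The genuine difficulty is bookkeeping: matching, for each of the five values of $i$ and each of the three slots, the exact dependency pattern (which variable is "fixed", which is the "free" injective one, and in which argument position) to the precise hypotheses \eqref{it:sigma1a}--\eqref{it:sigma2a}, \eqref{it:sigma1b}--\eqref{it:sigma2b}, \eqref{it:sigma1c}--\eqref{it:sigma2c} of the three lemmas, and not mixing up "left/right" of the edge $pq$ with "left/right" of the subpaths. A subtle point to get right in cases $i=1,2$ is the exclusion of the always-present path edge (the $pq\notin P_{u,v}$ / $pq\notin P_{v,w}$ condition), which guarantees the edge we extract is a genuine chord and not merely an edge of one of the $P_{x,y}$'s — without it $f,f'$ might coincide on consecutive subpaths and injectivity would fail. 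Everything else is a routine, if lengthy, verification once the right lemma is identified.
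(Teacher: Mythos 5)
Your overall strategy matches the paper's: extract $f(u_1,u_2,u_3)=p$ and $f'(u_1,u_2,u_3)=q$ from the designated edge in $\col_3$, verify the constancy/injectivity hypotheses, and feed them into Lemmas~\ref{lem:edges123}, \ref{lem:edges321}, or \ref{lem:edges132}. The case analysis for $i\in\{3,4,5\}$ is also handled the same way you describe: constancy follows because $P_{x,y}$ depends only on $\{x,y\}$, and injectivity follows from vertex-disjointness of the relevant subpaths provided by Corollary~\ref{cor:sum}. However, your account of cases $i=1$ and $i=2$ contains a genuine gap.

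For $i=1$ you assert that $p\in\Pleft_{u,w}$ ``depends only on $u$ (its endpoints are $u$ and $u^+$), so $f(u,v,w)$ depends only on $u$''. This is false: Lemma~\ref{lem:u+-}\eqref{it:com_vertex+} fixes only the endpoints $u$ and $u^+$, not the internal route of $\Pleft_{u,w}$, which can vary with $w$ since $P_{u,w}$ and $P_{u,w'}$ need not coincide between $u$ and $u^+$. (Indeed, if $f$ really depended only on $u$, then $f(u,v,\cdot)$ would be constant, which directly contradicts the hypothesis of Lemma~\ref{lem:edges321} you want to invoke.) You then claim the required injectivity of the free-variable restriction always comes from the vertex-disjointness statements in Corollary~\ref{cor:sum}. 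That also fails for $i=1$: the subpaths $\Pleft_{u_1,u_3}$ and $\Pleft_{u_1,u_3'}$ for $u_3\neq u_3'$ are \emph{not} vertex-disjoint (they both contain $u_1$ and $u_1^+$), and none of the three items of Corollary~\ref{cor:sum} gives disjointness of two $\Pleft$-subpaths anchored at the same left vertex. The paper's proof of injectivity of $f(u_1,u_2,\cdot)$ is therefore not a disjointness argument at all: it compares $\alpha=f(u_1,u_2,u_3)$ with $\alpha'=f(u_1,u_2,u_3')=f(u_1,u_3,u_3')$ and $\beta=f'(u_1,u_3,u_3')$, uses that $\col_3(u_1,u_3,u_3')=(1,a,b)$ forces the chord $\alpha'\beta$, and then derives $\alpha\neq\alpha'$ from the fact that $P_{u_1,u_3}$ is an \emph{induced} path, so $\alpha\in P_{u_1,u_3}$ cannot be adjacent to $\beta\in P_{u_1,u_3}$ unless they are consecutive on it — and the $pq\notin P$ clause rules out that degenerate case. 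You did notice the $pq\notin P$ clause matters in cases $i=1,2$, but you misidentified its role; it is not protecting a disjointness argument (there is none), it is the key ingredient of this indirect induced-path argument. A symmetric issue and argument occur for $i=2$. Without this, your proof of injectivity in cases $i=1,2$ does not go through.
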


\begin{proof}
Assume that $K$ has order at least $\max(s, 2t+5)$, and thus its interior  $K^\circ$ of $K$ has order at least $2t+3$.
 Let $(i,a,b)$ be the (unique) value of $\col_3$ on $K$. If $i=0$ then by definition of $\col_3$ we also have $a=b=0$ and we are done. So in the following we assume that $i\neq 0$. We only need to show that in this case $G$ contains a $K_{t,t}$ subgraph.

 Let $(u_1, u_2, u_3)$ be an ordered triple of $K^\circ$. Recall that in order to define $\col_3(u_1, u_2, u_3)$ in the current case where $i\neq 0$, we chose an edge $xy$ between some vertex $x$ on one of $P_{u_1, u_2}$, $P_{u_2, u_3}$, and $P_{u_1, u_3}$, and a vertex $y$ lying on another of these three paths, and the positions of $x$ and $y$ were then used to set the values of $a$ and $b$. We now set $f(u_1, u_2, u_3) = x$ and $f'(u_1, u_2, u_3)=y$. We consider the following cases:
 
 \medskip
 
\noindent\underline{$\boldsymbol{i=1}$} ($x\in \Pleft_{u_1,u_3}$, $y\in \Pmid_{u_1,u_2} \cup \Pright_{u_1, u_2}$ and $xy\not\in P_{u_1,u_2}$).
 As $P_{u_1,u_2}$ is independent of $u_3$, $f'(u_1,u_2,\cdot)$ is constant. Similarly, as $P_{u_1,u_3}$ is independent of $u_2$, $f(u_1,\cdot,u_3)$ is also constant. Moreover, it follows from \Cref*{cor:sum}\eqref{it:after_u+} that for fixed $u_1$, the subpaths $\Pmid_{u_1,u_2} \cup \Pright_{u_1,u_2}$  are all vertex-disjoint, and thus $f'(u_1,\cdot,u_3)$ is injective.

 It remains to prove that $f(u_1,u_2,\cdot)$ is injective. Consider any two vertices $u_3, u_3'$ in $K^\circ$ with $u_2 \prec u_3 \prec u_3'$. Consider also the following vertices:
 \begin{eqnarray*}
      \alpha=f(u_1,u_2,u_3)\in \Pleft_{u_1,u_3}  \\
      \alpha'=f(u_1,u_2,u_3')=f(u_1,u_3,u_3')\in \Pleft_{u_1,u_3'}\\ 
      \beta=f'(u_1,u_3,u_3') \in \Pmid_{u_1,u_3} \cup \Pright_{u_1, u_3}
 \end{eqnarray*}

 As $(u_1, u_3, u_3')$ is an ordered triple of $K^\circ$,  $\col_3(u_1, u_3, u_3')=(1,a,b)$ and thus there is an edge between $\alpha'$ and $\beta$ in $G$.
 As $P_{u_1, u_3}$ is an induced path, and since $\alpha$ is not the predecessor of $\beta$ on the path $P_{u_1,u_3}$, $\alpha$ is not adjacent to $\beta$ in $G$. Hence, $\alpha'\ne\alpha'$, and thus $f(u_1, u_2, u_3) \neq f(u_1, u_2, u_3')$. This shows that $f(u_1,u_2,\cdot)$ is injective, as desired.
 
 We can thus apply Lemma \ref{lem:edges321}, which implies that $G$ contains a $K_{t,t}$ subgraph.
 
 \medskip
 
 \noindent\underline{$\boldsymbol{i=2}$} ($x\in \Pleft_{u_2,u_3} \cup \Pmid_{u_2, u_3}$, $y\in \Pright_{u_1,u_3}$, and $xy\not\in P_{u_2,u_3}$). This is symmetric to the previous case (we only need to use Lemma \ref{lem:edges123} instead of Lemma \ref{lem:edges321}).
 
 \medskip
 
 \noindent\underline{$\boldsymbol{i=3}$} ($x\in \Pleft_{u_1,u_3}$, $y\in \Pleft_{u_2,u_3}\cup \Pmid_{u_2,u_3}$). It follows from \Cref*{cor:sum}\eqref{it:ter_disjoint} that the territories $\ter(u_1)$ are disjoint for distinct values of $u_1$, which implies that $f(\cdot,u_2,u_3)$ is injective. As 
  $P_{u_2,u_3}$ does not depend on $u_{1}$, $f'(\cdot,u_2,u_3)$ is constant and similarly, as 
$P_{u_1,u_3}$ does not depend on $u_2$, $f(u_1,\cdot,u_3)$ is constant. Finally, by \Cref*{cor:sum}\eqref{it:before_u-}, for fixed $u_3$ the subpaths $\Pleft_{u_2,u_3}\cup\Pmid_{u_2,u_3}$ are disjoint for distinct values of $u_2$, and thus $f'(u_1,\cdot,u_3)$ is injective.

We can thus apply Lemma \ref{lem:edges123}, which implies that $G$ contains a $K_{t,t}$ subgraph.

\medskip

\noindent\underline{$\boldsymbol{i=4}$} ($x\in \Pmid_{u_1,u_2}\cup \Pright_{u_1,u_2}$, $y\in \Pright_{u_1,u_3}$). This is symmetric to the previous case (we only need to use Lemma \ref{lem:edges321} instead of Lemma \ref{lem:edges123}, and \Cref*{cor:sum}\eqref{it:after_u+} instead of \Cref*{cor:sum}\eqref{it:before_u-}).

\medskip

\noindent\underline{$\boldsymbol{i=5}$} ($x\in \Pmid_{u_1,u_2}$, $y\in \Pmid_{u_2,u_3}$). By \Cref*{cor:sum}\eqref{it:before_u-}, for fixed $u_2$ the subpaths $\Pmid_{u_1,u_2}$ are disjoint for distinct values of $u_1$, and thus $f(\cdot,u_2,u_3)$ is injective. As  $P_{u_2,u_3}$ does not depend on $u_{1}$, $f'(\cdot,u_2,u_3)$ is constant and similarly, as $P_{u_1,u_2}$ does not depend on $u_{3}$, $f(u_1,u_2,\cdot)$ is constant. Finally, \Cref*{cor:sum}\eqref{it:after_u+} implies that for fixed $u_2$, the subpaths $\Pmid_{u_2,u_3}$ are disjoint for distinct values of $u_3$, and thus $f(u_1,u_2,\cdot)$ is injective. 

We can thus apply Lemma \ref{lem:edges132}, which implies that $G$ contains a $K_{t,t}$ subgraph.
\end{proof}

\subsection{Concluding the proof}
We are now ready to complete the proof of \Cref{th:main}.
We consider the following coloring of the ordered triples of $V(G)$: \[\col = \col_1 \times \col_2\times \col_2^\rev \times \col_3.\] According to the definition of $\col_1$ and Remarks~\ref{rem:col2} and~\ref{rem:col3}, $\col$ takes at most $5 \cdot (3s)^2 \cdot (5s^2+1) \le 10^3 s^4$ values (colors). Recall that \[n\geq  (10^3 s^4) \pow (10^3 s^4) \pow (2 \cdot 10^3 s^4 (\max(s, 2t + 5))).\]
By \Cref{th:ramseyc} (with $k=3$), there is a monochromatic clique $K$ for $\col$ of order $\max(s, 2t + 5)\ge s$, and the interior $K^\circ$ of $K$ has order $2t + 3$.
Assume for the sake of contradiction that $G$ has no $K_{t,t}$ subgraph.
By Lemmas~\ref{lem:split}, \ref{lem:split-rev}, and~\ref{lem:no_edge}, the unique color given to the triples of $K$ is of the form
$(0, (d, 0), (d^{\rev}, 0), (0,0,0))$, in particular $K$ has type $(\ell, d, d^\rev)$ for some~$\ell\in \intv{2}{s-1}$.

Consider any ordered triple $(u, v, w)$ of $K^\circ$, and consider the path \[
P = \Pleft_{u, w}\cdot \Pmid_{u, v} \cdot \Pright_{u,v}\quad \cdot\quad \Pleft_{v,w} \cdot \Pmid_{v, w} \cdot \Pright_{u, w},
\]
where the symbol $\cdot$ denotes the concatenation of paths. The path $P$ is illustrated in Figure~\ref{fig:pathP}.

\begin{figure}[htb]
  \centering
    \includegraphics[scale=1.1]{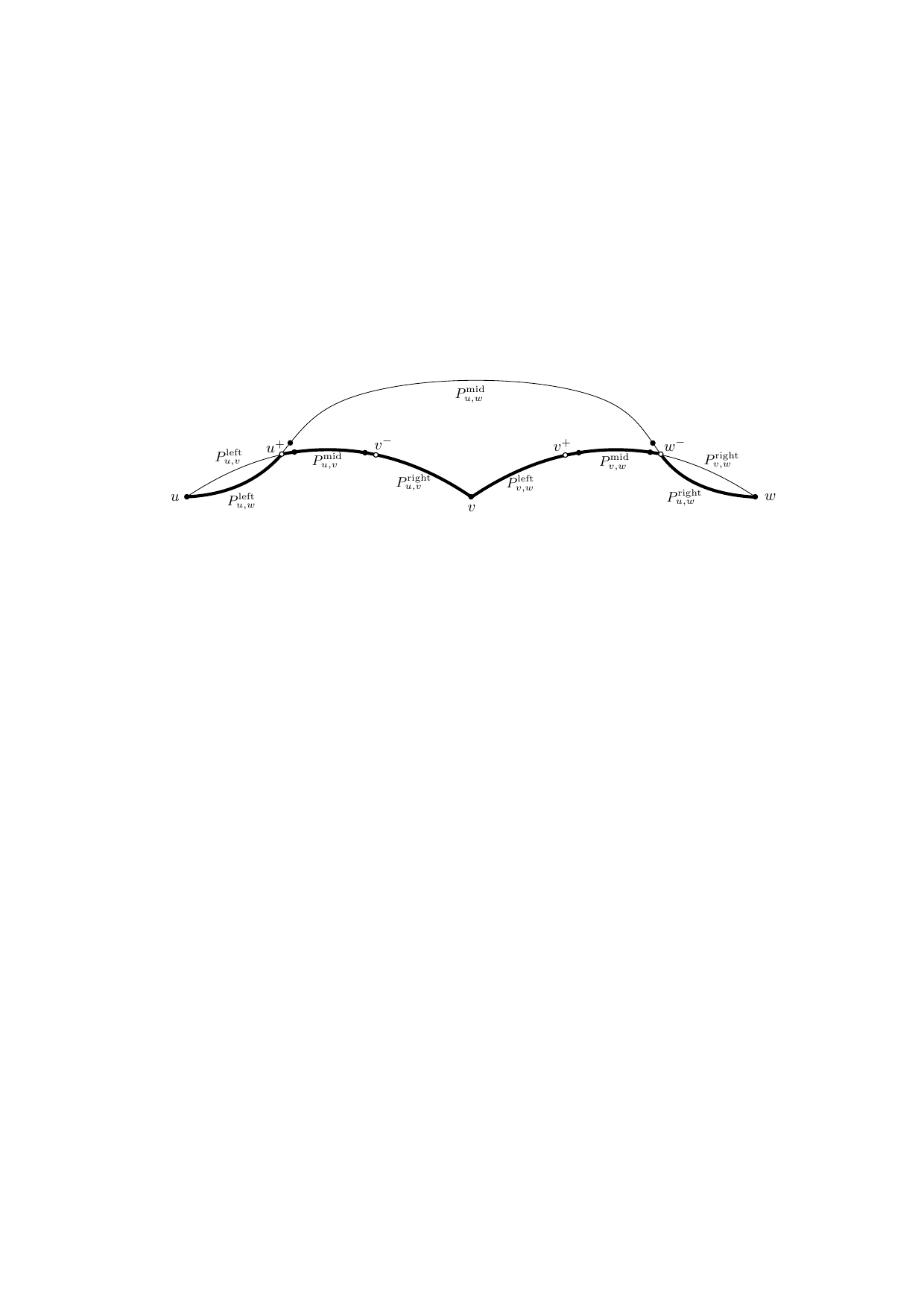}
  \caption{The path $P$ (in bold).\label{fig:pathP}}
\end{figure}

Note that $\Pleft_{u, w}\cdot \Pmid_{u, v} \cdot \Pright_{u,v}$ has the same order as $P_{u,v}$ (namely, $|P_{u,v}|=\ell$), and $\Pleft_{v,w} \cdot \Pmid_{v, w} \cdot \Pright_{u, w}$ has the same order as $P_{v,w}$ (namely, $|P_{v,w}|=\ell$). It follows that $P$ contains $2\ell-1>\ell$ vertices. Hence, the path $P$ is an increasing path from $u$ to $w$, with order greater than $\ell = |P_{u,w}|$ so it cannot be an induced path, by maximality of $P_{u,w}$.
So we know that $P$ has \emph{shortcuts} (by which we mean  edges of $G[P]$ that are not  edges of $P$).

\begin{figure}[htb]
  \centering
    \includegraphics[scale=0.8]{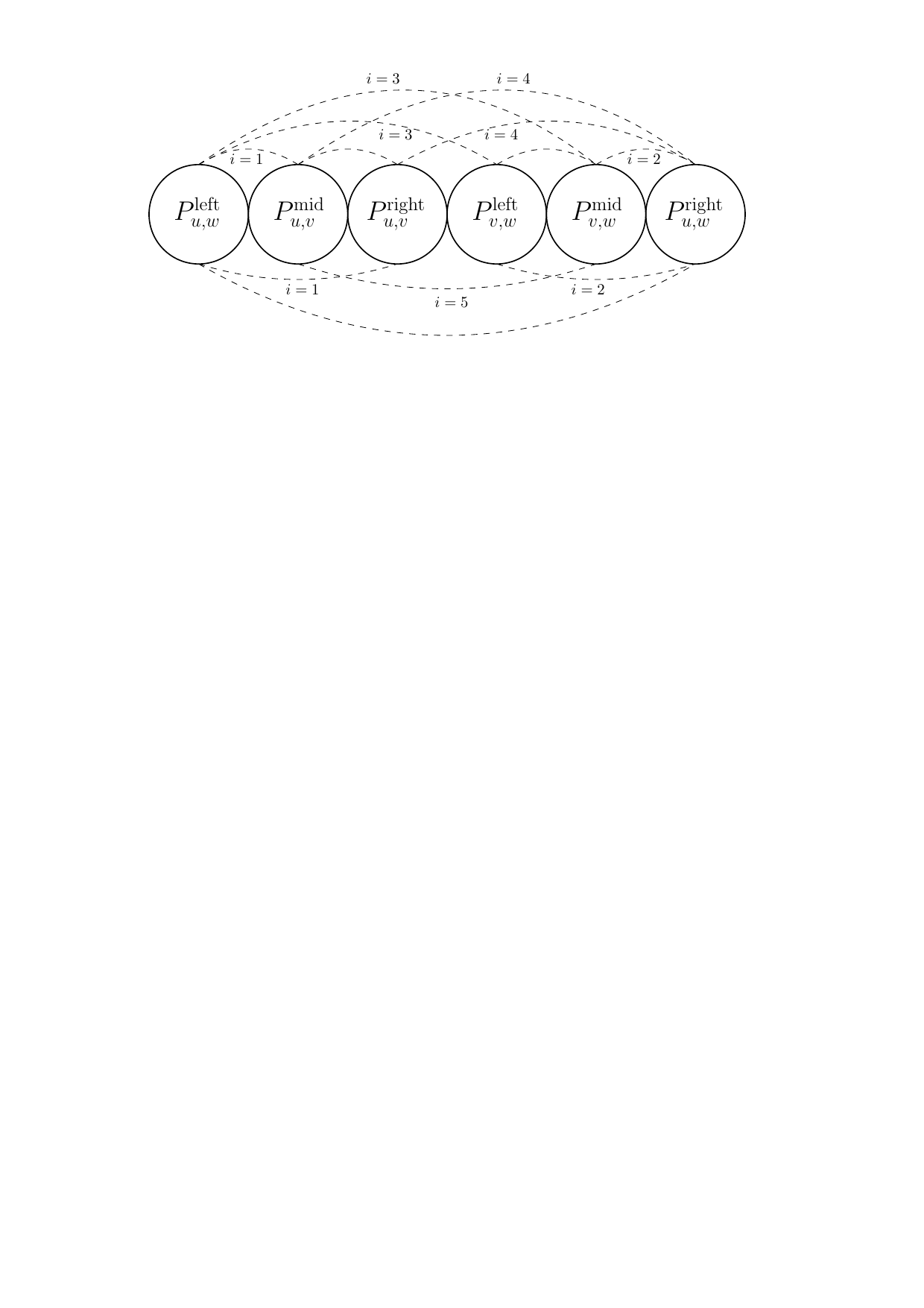}
  \caption{Non-adjacencies between the subpaths of $P$ (indicated by dashed lines). This excludes the edges of $P$ connecting each subpath to the next subpath.\label{fig:6P}}
\end{figure}

Because the value of $\col_3$ on the triples of $K$ is $(0,0,0)$, we know that  shortcuts $xy$ (with $x\prec y$) on $P$ are severely restricted. It follows from the definition of $\col_3$ that there are no edges between certain subpaths of $P$, as  illustrated  in Figure \ref{fig:6P} (where we use both the fact that the first entry of $\col_3$ is 0 and the fact that the paths $P_{u,v}$, $P_{u,w}$ and $P_{v,w}$ are induced). In particular there are no shortcuts involving $v$ and no shortcuts between $\Pleft_{u, w}$ and $\Pmid_{u, v} \cup \Pright_{u, v}$ (and symmetrically between $\Pleft_{v, w}\cup \Pmid_{v, w}$ and $\Pright_{u, w}$).
Those two facts combined imply $x \prec v \prec y$.

Consider a shortcut $xy$ on $P$ with $x$ minimum and, subject to this, $y$ maximum. Let $Q$ be the path obtained by following $P$ from $u$ to $x$, then the edge $xy$, and then $P$ from $y$ to $w$. Note that $Q$ is an increasing induced  path from $u$ to $w$ (here we crucially use the property that $x \prec v \prec y$ for all shortcuts $xy)$.
By the definition of $\col_3$, and the non-adjacencies between the different subpaths of $P$, we know that at least one of the following holds:
\begin{itemize}
    \item $u^+ \preceq x \prec v$ and $v\prec y \preceq v^+$. In this case $Q$ is the concatenation of $\Pleft_{u,w}$, a path of order at least 2 between $u^+$ and $v^+$, and $\Pmid_{v,w} \cdot \Pright_{u,w}$. It follows that $Q$ has order at least $\ell +1>|P_{u,w}|$, contradicting the maximality of $P_{u,w}$, or
    \item $v^- \preceq x\prec v$ and $v^+\prec y \preceq w^-$, in which case $Q$ is the concatenation of $\Pleft_{u,w} \cdot \Pmid_{u,v}$, a path of order at least 2 between $v^-$ and $w^-$, and $\Pright_{u,w}$. It follows that $Q$ has order at least  $\ell +1>|P_{u,w}|$, which is again a contradiction.
\end{itemize}

We reached the desired contradiction, so $G$ contains an increasing induced path of order~$s$ or a $K_{t,t}$ subgraph. This concludes the proof of \Cref{th:main}.\qed

\section{Forbidden patterns}\label{sec:pattern}

\subsection{Preliminaries}

An \emph{ordered graph} is a graph with a total order on its
vertex set. Consider an ordered graph $G$ with order
$v_1,\ldots,v_n$ and an ordered graph $H$ with order
$u_1,\ldots,u_k$. We say that $G$ \emph{contains $H$ as an ordered subgraph}
if
there exist $1\le a_1<a_2<\ldots <a_k\le n$ such that for all $1\le
i,j\le k$, if $u_{i}$ is adjacent to $u_{j}$ in $H$, then $v_{a_i}$ is
adjacent to $v_{a_j}$ in $G$. In words, $H$ appears as a subgraph in
$G$ in such a way that  the ordering of the copy of $H$ in  $G$ is consistent with the
ordering of $G$.

 \medskip

In this section, we will usually think of ordered graphs as being ordered from left to right (with the smallest vertices on the left, and the largest on the right). This convention holds in all figures. We will sometimes also use it in expressions such as \say{all vertices to the left of $u$} or \say{$v$ has no neighbor to the right}, by which we mean \say{all vertices smaller than $u$} and \say{$v$ has no neighbor larger than itself}, respectively.

\medskip

Let $G$ be a graph and $P=v_1,v_2,\ldots,v_n$ be a Hamiltonian path in
$G$. Note that $P$ allows us to consider $G$ and $G-E(P)$ (the spanning
subgraph of $G$ obtained by removing the edges of $P$) as ordered
graphs. Given an ordered graph $H$, we say that \emph{$(G,P)$ contains $H$ as a
pattern} if the ordered graph $G-E(P)$ contains $H$ as an ordered
subgraph. If $(G,P)$ does not contain $H$ as a pattern, we say that \emph{$(G,P)$
  avoids the pattern $H$}. When $P$ is clear from the context we simply say that $G$ contains or avoids the pattern $H$ (but in all such instances we really mean that $H$ is a pattern with respect to some Hamiltonian path $P$, so the edges of $P$ are not part of the pattern).

\medskip

For any ordered graph $H$, and integer $n$, we let $g_H(n)$ be the
maximum integer $t$ such that for any graph $G$ with a Hamiltonian path
$P=v_1,\ldots,v_n$, if $(G,P)$ avoids the pattern $H$ then $G$
contains an induced path of order at least $t$.

\medskip

Our main objective is to classify the growth rate of $g_H$, depending of the
structure of the 
ordered graph $H$. We start with some basic observations.

\begin{observation}\label{obs:order} If $G$ is a graph with a Hamiltonian path
  $v_1,\ldots,v_n$, such that for every edge $v_iv_j$ in $G$,
  $|i-j|\le \ell$, then $G$ contains an induced path of order at
  least $n/\ell$.
\end{observation}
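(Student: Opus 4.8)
The plan is to build the induced path greedily along $P$, from left to right, always jumping to the neighbour with the largest index. Concretely, I would set $a_1 := 1$ and, as long as $a_i < n$, define $a_{i+1}$ to be the largest index $j > a_i$ with $v_{a_i}v_j \in E(G)$; such an index exists because $v_{a_i}v_{a_i+1}$ is an edge of $P$ and hence of $G$, and it satisfies $a_i < a_{i+1} \le a_i + \ell$ by the span hypothesis. Since the indices strictly increase and are bounded by $n$, the process stops after some number $t$ of steps, with $a_t = n$; I then claim that $x_1 \cdots x_t$, where $x_i := v_{a_i}$, is the desired induced path.

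First I would check that this walk is an induced path. Consecutive vertices $x_i, x_{i+1}$ are adjacent by construction, so it suffices to rule out chords. Fix $i$ and $k$ with $k \ge i+2$; since $a_{i+1} < a_{i+2} < \cdots$ we have $a_k > a_{i+1}$, and $a_{i+1}$ was chosen as the \emph{largest} index of a right-neighbour of $v_{a_i}$, so $v_{a_i}$ has no neighbour of index larger than $a_{i+1}$; in particular $v_{a_i}v_{a_k} \notin E(G)$. Hence there are no chords. Then I would bound the order: summing the inequalities $a_{i+1}-a_i \le \ell$ over $i = 1, \dots, t-1$ gives $n - 1 = a_t - a_1 \le (t-1)\ell$, hence $t \ge 1 + (n-1)/\ell \ge n/\ell$, as required.

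The argument is short and I do not expect a genuine obstacle; the one point worth stressing is \emph{why} the greedy choice must take the farthest available neighbour rather than, say, the nearest — it is precisely this that pushes all subsequent vertices of the path strictly beyond the reach of $v_{a_i}$ and thereby guarantees that the resulting path is induced. Note also that because the construction only ever moves forward, the hypothesis $|i-j|\le\ell$ is used solely to control how far each step advances.
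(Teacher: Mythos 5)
Your proof is correct. Let me compare it with the paper's.

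The paper's proof is a one-liner: take any shortest path from $v_1$ to $v_n$ in $G$. A shortest path is automatically induced (a chord would shorten it), and since each edge changes the index by at most $\ell$, traversing from index $1$ to index $n$ requires at least $(n-1)/\ell$ edges, hence at least $(n-1)/\ell + 1 \ge n/\ell$ vertices.

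You instead build the path greedily by always jumping to the rightmost neighbour. Your argument is correct: the path is induced because $a_{i+1}$ is chosen as the largest index adjacent to $v_{a_i}$, so $v_{a_i}$ has no neighbour beyond $x_{i+1}$, ruling out chords; and the step bound $a_{i+1}-a_i\le\ell$ gives the same count. The approaches differ in one mildly interesting respect: the shortest path the paper uses need not be monotone in the index, while your greedy path is by construction \emph{increasing} with respect to the Hamiltonian order. That is a stronger conclusion (and increasing induced paths are the currency elsewhere in the paper), obtained at the small cost of having to argue inducedness by hand rather than getting it for free from minimality. Both are valid and essentially equally short.
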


\begin{proof}
It suffices to consider a shortest path from $v_1$ to $v_n$ in
$G$. This is an induced path in $G$ of order at least $(n-1)/\ell+1\ge n/\ell$.
\end{proof}

Let $A$ and $B$ be two ordered graphs. The \emph{concatenation} of $A$
and $B$, denoted by $A\cdot B$, is the ordered graph whose graph is the
disjoint union of $A$ and $B$, and whose order consists of the ordered
vertices of $A$, followed by the ordered vertices of $B$.

\begin{lemma}\label{lem:concat}
Let $A$ and $B$ be two ordered graphs. Then for any $n\ge 0$,
\[g_{A\cdot B}(n)\ge \min\{g_A(\lfloor n/2\rfloor),g_B(\lceil n/2\rceil)\}.\]
\end{lemma}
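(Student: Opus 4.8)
The plan is to prove the statement $g_{A\cdot B}(n)\ge \min\{g_A(\lfloor n/2\rfloor),g_B(\lceil n/2\rceil)\}$ by the obvious splitting argument: given a graph $G$ with a Hamiltonian path $P=v_1,\ldots,v_n$ avoiding the pattern $A\cdot B$, we want to produce a long induced path by looking at a prefix and a suffix of $P$ and showing that at least one of them must avoid $A$ (in the prefix) or $B$ (in the suffix).

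Here is how I would carry it out. Set $m=\lfloor n/2\rfloor$, so $n-m=\lceil n/2\rceil$. Let $G_1=G[\{v_1,\ldots,v_m\}]$ with its Hamiltonian path $P_1=v_1,\ldots,v_m$, and let $G_2=G[\{v_{m+1},\ldots,v_n\}]$ with its Hamiltonian path $P_2=v_{m+1},\ldots,v_n$. First I would observe that if $(G_1,P_1)$ avoids the pattern $A$, then by definition of $g_A$, the graph $G_1$ (hence $G$) contains an induced path of order at least $g_A(m)=g_A(\lfloor n/2\rfloor)$, and similarly if $(G_2,P_2)$ avoids the pattern $B$ then $G$ contains an induced path of order at least $g_B(\lceil n/2\rceil)$. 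In either case we are done, so it remains to rule out the case that $(G_1,P_1)$ contains $A$ as a pattern \emph{and} $(G_2,P_2)$ contains $B$ as a pattern. The key point is that an ordered copy of $A$ in $G_1-E(P_1)$ uses only vertices among $v_1,\ldots,v_m$ and only non-$P_1$ edges, which are also non-$P$ edges since $P_1$ is a subpath of $P$; likewise an ordered copy of $B$ in $G_2-E(P_2)$ uses only vertices among $v_{m+1},\ldots,v_n$ and non-$P$ edges. Concatenating the vertex set of the $A$-copy (all smaller than $m+1$) with that of the $B$-copy (all larger than $m$) gives an ordered copy of $A\cdot B$ in $G-E(P)$: the relative order is correct by construction, every required edge within the $A$-part and within the $B$-part is present, and $A\cdot B$ requires no edges between the two parts. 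This contradicts the assumption that $(G,P)$ avoids $A\cdot B$.

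The only mild subtlety to be careful about — and the step I'd expect to need the most attention — is checking that removing the edges of the \emph{sub}paths $P_1$ and $P_2$ rather than all of $P$ does not accidentally make a pattern appear in $G_i-E(P_i)$ that would not appear in the relevant induced-ordered-subgraph of $G-E(P)$: concretely, the edge $v_mv_{m+1}$ of $P$ is not removed in either $G_1$ or $G_2$, but it is not present in $G_1$ or $G_2$ at all (its endpoints lie in different halves), so this causes no problem; all other edges of $P$ incident to the first half or to the second half are edges of $P_1$ or $P_2$ respectively and are removed. Hence $G_i-E(P_i)$ is exactly the ordered subgraph of $G-E(P)$ induced by the $i$-th half, which is what the argument above uses. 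Writing all of this up amounts to one short paragraph; there is no real obstacle, only bookkeeping about which edges are subpath edges.

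For completeness, here is the proof in compact form.

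\begin{proof}
Let $m=\lfloor n/2\rfloor$, so that $n-m=\lceil n/2\rceil$. Let $G$ be a graph with a Hamiltonian path $P=v_1,\ldots,v_n$ such that $(G,P)$ avoids the pattern $A\cdot B$. Set $G_1=G[\{v_1,\ldots,v_m\}]$ with Hamiltonian path $P_1=v_1,\ldots,v_m$, and $G_2=G[\{v_{m+1},\ldots,v_n\}]$ with Hamiltonian path $P_2=v_{m+1},\ldots,v_n$. Since $P_1$ and $P_2$ are subpaths of $P$, and their endpoints $v_m,v_{m+1}$ are joined in $P$ by an edge that does not belong to $G_1$ or $G_2$, the ordered graph $G_i-E(P_i)$ is precisely the ordered subgraph of $G-E(P)$ induced by the $i$-th half of the vertices, for $i\in\{1,2\}$.

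Suppose first that $(G_1,P_1)$ avoids the pattern $A$. Then by definition of $g_A$, the graph $G_1$, and hence $G$, has an induced path of order at least $g_A(m)=g_A(\lfloor n/2\rfloor)$, and we are done. Symmetrically, if $(G_2,P_2)$ avoids the pattern $B$, then $G$ has an induced path of order at least $g_B(\lceil n/2\rceil)$.

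It remains to show that these are the only possibilities, i.e., that $(G_1,P_1)$ and $(G_2,P_2)$ cannot simultaneously contain $A$ and $B$ as patterns, respectively. Assume they do. Then there is an ordered copy of $A$ in $G_1-E(P_1)$, using vertices among $v_1,\ldots,v_m$, and an ordered copy of $B$ in $G_2-E(P_2)$, using vertices among $v_{m+1},\ldots,v_n$. By the first paragraph, both copies avoid the edges of $P$, all vertices of the first copy precede all vertices of the second, and together they realize an ordered copy of $A\cdot B$ in $G-E(P)$ (no edges between the two parts are required). This contradicts the assumption that $(G,P)$ avoids $A\cdot B$.

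Therefore at least one of $(G_1,P_1)$, $(G_2,P_2)$ avoids $A$, resp.\ $B$, and in either case $G$ has an induced path of order at least $\min\{g_A(\lfloor n/2\rfloor),g_B(\lceil n/2\rceil)\}$.
\end{proof}
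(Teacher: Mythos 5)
Your proof is correct and follows exactly the same splitting argument as the paper: divide the path in half, observe that avoiding $A$ in the prefix or $B$ in the suffix gives a long induced path, and note that otherwise the two patterns concatenate into $A\cdot B$. The extra bookkeeping you add about $E(P_i)$ versus $E(P)$ is sound but not present in the paper, which treats it as immediate.
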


\begin{proof}
  Let $G$ be a graph with a Hamiltonian path $P=v_1,v_2,\ldots,v_n$. We
  divide $P$ into two paths $P_1$ (on $\lfloor n/2\rfloor$ vertices)
  and $P_2$ (on $\lceil n/2\rceil$ vertices). For $i\in \{1,2\}$, let
  $G_i$ be  the ordered subgraph
  of $G$ induced by $P_i$. If $(G_1,P_1)$ avoids the pattern $A$,
  then $G_1$ (and thus $G$) contains an induced path of order
  $g_A(\lfloor n/2\rfloor)$. If $(G_2,P_2)$ avoids the pattern $B$,
  then $G_2$ (and thus $G$) contains an induced path of order
  $g_B(\lceil n/2\rceil)$. Otherwise $(G_1,P_1)$ contains the pattern
  $A$ and $(G_2,P_2)$ contains the pattern $B$, and thus $(G,P)$
  contains the pattern $A\cdot B$. This shows that if $(G,P)$ avoids the
  pattern $A\cdot B$, then it contains an induced path of order
  $\min\{g_A(\lfloor n/2\rfloor),g_B(\lceil n/2\rceil)\}$, as desired.
\end{proof}

We now consider two illuminating examples of pairs $(G,P)$ where $P$
is a Hamiltonian path in $G$.

\medskip

\noindent {\bf Example 1.} Consider a path $P$ with vertex set $1,2,\ldots,n$ and for every pair
of integers $1\le i<j\le n$ such that $i$ is odd and $j$ is even, add
an edge between $i$ and $j$ (if the edge is not already contained in
$P$). Let $G$ be the resulting bipartite graph (note that $G$ is a supergraph of
the \emph{half-graph}). We first observe  that the maximum order of an induced path in $G$ is 4.

To see this, assume for the sake of contradiction their exists an induced path $Q$ in $G$ of order 5 with vertices $a_1, \dots, a_5$. The path $Q$ contains either 3 odd or 3 even vertices. Without loss of generality, assume that $Q$ contains 3 odd vertices. The odd vertices must be $a_1, a_3$ and $a_5$ since $G$ is bipartite, and $a_2$ and $a_4$ are even. Let $a$ be the largest vertex of $Q$.
If $a$ is even, then by definition of $G$ it is adjacent to $a_1, a_3$ and $a_5$ and thus $a$ has degree 3 in $Q$, a contradiction.
Thus $a$ is either $a_1, a_3$ or $a_5$, and by symmetry, we only need to consider the cases $a=a_1$ and $a=a_3$.

If $a=a_3$, then $a_2$ and $a_4$ are two vertices smaller than $a_3$ and adjacent to it, a contradiction with the fact that $a_3$ is odd.
If $a=a_1$ then $a_2$ is necessarily adjacent to $a_1$ in $P$, and so $a_2 = a_1 - 1$.
Thus we have $a_2 > a_5$, which implies that $a_2$ adjacent to $a_5$, a contradiction.
Hence $G$ does not contain an induced path on $5$ vertices (and the induced path $2,1,4,5$ shows that $G$ indeed contains an induced path on $4$ vertices).

\medskip

An interesting property of the graph $G$ of Example 1 above (as an ordered graph,
with the order $1,2,\ldots,n$ given by $P$), is that for each vertex
$1\le i \le n$, the neighbors of $i$ in $G$ distinct from $i-1$ and $i+1$ are
either all to the left of $i$, or all to the right of $i$. 
This shows that $(G,P)$ avoids any pattern in which some vertex has a neighbor to the left and
a neighbor to the right (the simplest example of such a pattern is a
path $u,v,w$ with $u\prec v \prec w$). 

\smallskip

Example 1 immediately implies the following.

\begin{observation}\label{obs:d2a}
Let $H$ be an ordered graph such that $\{g_H(n): n\in \mathbb{N}\}$ is unbounded. Then for each vertex $v\in V(H)$, all neighbors of $v$ are 
smaller than  $v$, or all neighbors of $v$ are larger than $v$. In particular $H$ is bipartite. 
\end{observation}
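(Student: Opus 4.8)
The statement to prove is Observation~\ref{obs:d2a}: if $\{g_H(n):n\in\mathbb N\}$ is unbounded, then every vertex of $H$ has all its neighbors on one side, and consequently $H$ is bipartite. The plan is to use Example~1 as a black box, exactly as the text suggests.

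First I would recall the key property of the graph $G$ from Example~1: in the ordered graph $(G,P)$ with $P$ the Hamiltonian path $1,2,\dots,n$, every vertex $i$ has all of its neighbors \emph{other than $i-1$ and $i+1$} either entirely to the left of $i$ or entirely to the right of $i$ — since the non-path neighbors of an odd vertex $i$ are larger even integers, and the non-path neighbors of an even vertex $i$ are smaller odd integers. Hence in $G-E(P)$, every vertex has all its neighbors on one side. The text already establishes that the maximum order of an induced path in $G$ is $4$, so $g$ does not help us: more precisely, for this particular $(G,P)$, $G$ has no induced path of order $5$, whatever $n$ is.

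Second, I would argue the contrapositive. Suppose $H$ has a vertex $v$ with a neighbor $u\prec v$ and a neighbor $w\succ v$ (i.e., $H$ is not of the stated form). I claim that $(G,P)$ avoids $H$ as a pattern, for every $n$. Indeed, if $(G,P)$ contained $H$ as a pattern, then $G-E(P)$ would contain $H$ as an ordered subgraph; the image of $v$ would be a vertex of $G-E(P)$ with a neighbor (the image of $u$) smaller than it and a neighbor (the image of $w$) larger than it, contradicting the one-sidedness property above. So $(G,P)$ avoids $H$. But then, by definition of $g_H$, we would need $G$ to contain an induced path of order at least $g_H(n)$; since $G$ never has an induced path of order more than $4$, this forces $g_H(n)\le 4$ for all $n$, so $\{g_H(n):n\in\mathbb N\}$ is bounded, contradicting the hypothesis. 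Therefore every vertex of $H$ has all neighbors smaller than itself or all neighbors larger than itself. Finally, for the "in particular" clause: colour each vertex of $H$ by the parity of its position in the order (or, more robustly, note that the relation "has all neighbors to the right" defines a bipartition) — concretely, let $A$ be the set of vertices whose neighbors are all larger, and $B$ the rest; any edge $xy$ with $x\prec y$ has $x\in A$ and $y\notin A$, so $A$ is independent, and symmetrically $B$ is independent, giving a proper $2$-colouring. Hence $H$ is bipartite.

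The proof has essentially no obstacle — it is a one-line reduction to Example~1 — so the only thing to be careful about is the bipartiteness deduction: one must phrase the $2$-colouring so that it genuinely uses the one-sidedness conclusion rather than assuming something about positions, and check that both colour classes are independent. I would present the whole argument in three or four sentences.
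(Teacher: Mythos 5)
Your proposal is correct and follows exactly the argument the paper intends when it writes ``Example 1 immediately implies the following'': you invoke the graph $G$ of Example~1, observe that in $G-E(P)$ every vertex has all neighbors on one side, conclude that $(G,P)$ avoids any pattern with a two-sided vertex while having no induced path of order $5$, and then deduce bipartiteness from the two-sided/one-sided partition. Your bipartiteness argument (splitting $V(H)$ into vertices with all neighbors larger versus the rest, and checking both classes are independent) is the right way to make the ``in particular'' explicit.
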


\noindent {\bf Example 2.}
For every integer $i\ge 1$, we define inductively an ordered graph $G_i$ such that any two consecutive vertices in the order are adjacent, so the sequence of vertices in the order forms a Hamiltonian path $P_i$ of $G_i$. The graph $G_1$  consists of a single vertex. For any $i\ge 2$, the ordered graph $G_i$ is defined as follows: we consider three vertices $u,v,w$ with $u\prec v \prec w$ and we add edges $uv$ and $uw$. We then place an ordered copy $A$ of $G_{i-1}$ between $u$ and $v$, and an ordered copy $B$ of $G_{i-1}$ between $v$ and $w$. Finally we add an edge between  $u$ and the first vertex of $A$, an edge between the last vertex of $B$ and $w$, and we join $v$ to the last vertex of $A$ and the first vertex of $B$. See Figure \ref{fig:g2g3} for an illustration of $G_2$ and $G_3$.

\begin{figure}[htb]
  \centering
    \includegraphics[scale=1]{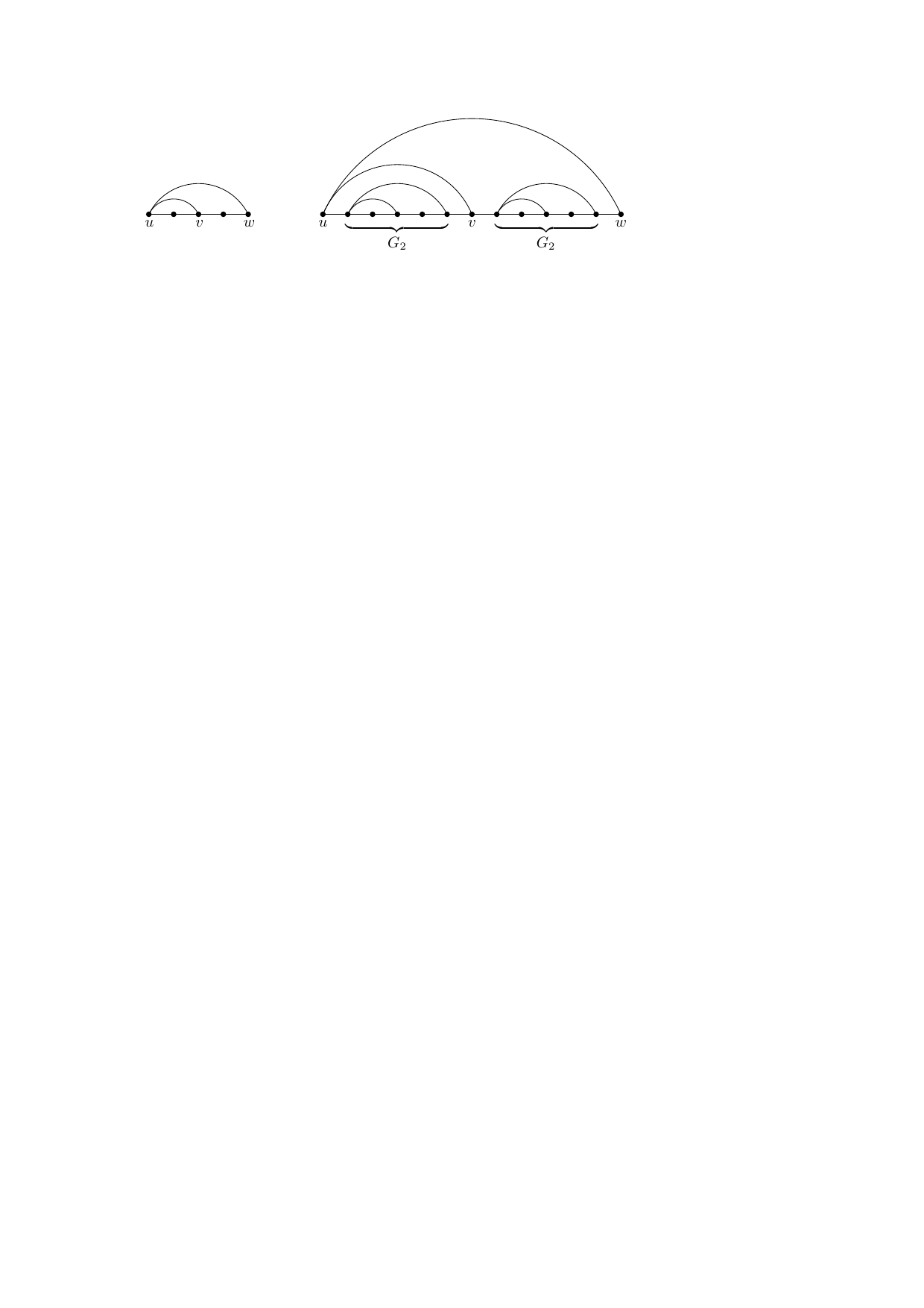}
  \caption{The ordered graphs $G_2$ (left), and $G_3$ (right), where vertices are ordered from
    left to right.\label{fig:g2g3}}
\end{figure}

The first property of $G_i$ is that each vertex $v$ has at most two neighbors preceding it in the order, including the neighbor of $v$ on the path $P_i$. In other words, each vertex of $G_i$ has at most one neighbor in the graph $G_i-E(P_i)$ that precedes it. 

The second property is that the longest induced path in $G_i$ has order $O(\log |V(G_i)|)$. To see why this holds, call an induced path in $G_i$ \emph{extremal} if one of its endpoints is $u$ or $w$, and 
observe that any extremal induced path in $G_{i}$  must consist of an extremal induced path in one of the two copies of $G_{i-1}$, together with at most 3 vertices of $G_{i}$ (namely $u$, $v$, or $w$), so a simple induction shows that extremal induced paths have order $O(\log |V(G_i)|)$. It remains to observe that any induced path in $G_i$  is either completely contained in a copy of $G_{i-1}$, or it consists of a union of two extremal induced paths, each in a different copy of $G_{i-1}$, together with at most 3 vertices (namely $u$, $v$, or $w$). 

\medskip

Using Examples 1 and 2, we deduce the following.

\begin{observation}\label{obs:d2b}
Let $H$ be an ordered graph such that $g_H(n)=\omega(\log n)$. Then $H$ has
maximum degree 1.
\end{observation}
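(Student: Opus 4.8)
The plan is to argue by contradiction: assume $H$ has a vertex of degree at least $2$, and deduce that $g_H(n)=O(\log n)$ along an infinite sequence of values of $n$, contradicting $g_H(n)=\omega(\log n)$. First, since $g_H(n)=\omega(\log n)$ the set $\{g_H(n):n\in\N\}$ is unbounded, so \Cref{obs:d2a} applies: every vertex of $H$ has all its neighbors smaller than itself, or all its neighbors larger than itself. Combined with the degree assumption, this yields a vertex $v$ of $H$ with two distinct neighbors $a,b$ lying on the same side of $v$. Reversing the order of $H$ changes neither the hypothesis nor the conclusion, since the map $(G,P)\mapsto (G,P^{\rev})$ is a bijection between pairs avoiding $H$ and pairs avoiding $H^{\rev}$ that preserves the host graph $G$ (and hence its induced paths), so $g_H=g_{H^{\rev}}$; thus I may assume $a<v$ and $b<v$.

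Next I would bring in the family $(G_i,P_i)$ constructed in Example~2. The facts I need are exactly the ones recorded there: $|V(G_i)|\to\infty$ as $i\to\infty$; in $G_i-E(P_i)$ every vertex has at most one neighbor preceding it in the order; and the longest induced path of $G_i$ has order $O(\log|V(G_i)|)$.

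The core step is to verify that $(G_i,P_i)$ avoids the pattern $H$ for every $i$. Suppose not: then there is an order-preserving embedding $\varphi$ of $H$ into $G_i-E(P_i)$. Then $\varphi(a)$ and $\varphi(b)$ are two distinct neighbors of $\varphi(v)$ in $G_i-E(P_i)$, and both precede $\varphi(v)$ in the order of $G_i$ because $a<v$ and $b<v$; this contradicts the first property of $G_i$ above. Hence $(G_i,P_i)$ avoids $H$, so $g_H(|V(G_i)|)$ is at most the order of the longest induced path of $G_i$, i.e.\ $O(\log|V(G_i)|)$. Since $|V(G_i)|\to\infty$, we have produced an infinite sequence of values of $n$ with $g_H(n)=O(\log n)$, contradicting $g_H(n)=\omega(\log n)$. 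Therefore $H$ has maximum degree at most $1$.

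I do not anticipate a real obstacle here; the argument is short once \Cref{obs:d2a} and Example~2 are in hand. The only two points that deserve a line of care are the reduction to the case where \emph{both} neighbors of $v$ precede it (that is, the reversal symmetry $g_H=g_{H^{\rev}}$, which is precisely why \Cref{obs:d2a} is stated as an ``all-left or all-right'' dichotomy rather than with a fixed side), and the observation that a single unbounded subsequence of $n$ along which $g_H(n)=O(\log n)$ already contradicts $g_H(n)=\omega(\log n)$, so that no monotonicity of $g_H$ needs to be assumed.
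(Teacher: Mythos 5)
Your proof is correct and takes essentially the same route as the paper's: contradiction via Observation~\ref{obs:d2a} to locate a vertex $v$ of degree $\ge 2$ with all neighbors on one side, reduce by reversal symmetry to the ``all-left'' case, and then observe that the Example~2 family $(G_i,P_i)$ avoids $H$ while having only $O(\log|V(G_i)|)$-vertex induced paths, contradicting $g_H(n)=\omega(\log n)$. The extra care you take in spelling out the reversal symmetry $g_H = g_{H^{\rev}}$ and in noting that an unbounded subsequence suffices is sound and matches what the paper leaves implicit under ``by symmetry''.
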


\begin{proof}  Assume for the sake of contradiction that $H$ has a vertex $v$ of degree at least two. By Observation~\ref{obs:d2a}, all the neighbors of $v$ are predecessors of $v$ or all the neighbors of $v$ are successors of $v$. By symmetry, assume that all the neighbors of $v$ are predecessors of $v$. Observe that for any integer $i\ge 1$, the pair $(G_i,P_i)$ (defined above) does not contain $H$ as a pattern, since each vertex of $G_i$ has at most one neighbor in the graph $G_i-E(P_i)$ that precedes it, while $v$ has at least two neighbors that precede it in $H$. Since the longest induced path in $G_i$ has order $O(\log |V(G_i)|)$, this contradicts the assumption that  $g_H(n)=\omega(\log n)$ and concludes the proof.
\end{proof}

Observation \ref{obs:d2b} says that if we want to obtain
better-than-logarithmic lower bounds on the size of an induced path,
we need to study patterns of maximum degree at most 1, i.e.,
matchings. This is the topic of the next section.

\subsection{Matchings}\label{sec:matching}

In this section we investigate patterns consisting of ordered matchings, that is ordered
graphs in which all vertices have degree at most 1. We first prove a
result that will have several consequences of interest, the first one
being that we can restrict our study to ordered \emph{perfect
  matchings}, that is ordered 1-regular graphs.

\medskip

Let $G$ be an ordered graph with order $v_1,\ldots,v_n$, and let $H$
be an ordered subgraph of $G$ with vertex set $v_{a[1]}, v_{a[2]},\ldots,
v_{a[k]}$ (for $1\le a[1] < a[2] < \ldots < a[k] \le n$). The \emph{gap}
of $H$ in $G$ is defined as the minimum of $a[{i+1}]-a[i]$, for $1\le i
\le k-1$. The definition of the gap naturally extends to patterns in
pairs $(G,P)$ where $G$ is a graph and $P$ a Hamiltonian path in
$G$.

\begin{lemma}\label{lem:gap}
Let $G$ be a graph with a Hamiltonian path $P=v_1,\ldots,v_n$ ($n\ge 2$), and
with no induced path of order $t$. Let $H$
be a non-empty ordered matching. Then for any integer $m$ such that
$g_H(\tfrac{m-1}2)\ge t$, $(G,P)$ contains $H$ as a pattern with gap
at least $n/mt$.
\end{lemma}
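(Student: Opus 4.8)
The plan is to recast the conclusion in terms of increasing paths and then exploit $g_H$ through a dichotomy. Set $g:=\lceil n/mt\rceil$ and $N:=(m-1)/2$, so that $g_H(N)\ge t$, and call an increasing path $Q=v_{b_1}v_{b_2}\cdots v_{b_\ell}$ of $G$ (i.e.\ $b_1<b_2<\cdots<b_\ell$) \emph{$g$-spread} if $b_{i+1}-b_i\ge g$ for every $i$. The crucial elementary point is that if $S=\{v_{b_1},\dots,v_{b_\ell}\}$ is the vertex set of such a $Q$, then any two vertices of $S$ are at distance at least $g\ge 2$ along $P$, so $G[S]$ contains no edge of $P$; hence $G[S]-E(Q)$, ordered along $P$, is a subgraph of the ordered graph $G-E(P)$, and any ordered copy of $H$ in $G[S]-E(Q)$ is a pattern of $(G,P)$ — one whose gap is at least $g$, since every subset of a $g$-spread set is $g$-spread. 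So it suffices to produce a single $g$-spread increasing path $Q$ of $G$ of order exactly $N$: writing $S$ for its vertex set, either $(G[S],Q)$ contains $H$, in which case $(G,P)$ contains $H$ as a pattern with gap $\ge g\ge n/mt$ and we are done, or $(G[S],Q)$ avoids $H$, in which case $g_H(N)\ge t$ yields an induced path of $G[S]$ of order $t$, hence an induced path of $G$ of order $t$, contradicting the hypothesis.

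Thus the whole lemma reduces to finding one long $g$-spread increasing path. For this, put $r:=\lfloor n/g\rfloor$ (which is at least $N$ in the non-degenerate range of parameters), split $\{1,\dots,n\}$ into $r$ consecutive intervals $I_1,\dots,I_r$ of $P$-lengths in $\{g,\dots,2g-1\}$, and let $\Gamma$ be the graph on $\{1,\dots,r\}$ with $a\sim b$ iff $G$ has an edge between $I_a$ and $I_b$; note $\Gamma$ contains the path $1,2,\dots,r$. The key step is that $\Gamma$ also has no induced path of order $t$: if $a_1<\dots<a_\ell$ is an induced path of $\Gamma$, then the induced subgraph $G[I_{a_1}\cup\cdots\cup I_{a_\ell}]$ has an edge between $I_{a_s}$ and $I_{a_{s'}}$ only when $|s-s'|\le 1$ (otherwise $a_s\sim_\Gamma a_{s'}$, contradicting inducedness), and since consecutive $I_{a_s}$ are joined it contains a path from a vertex of $I_{a_1}$ to one of $I_{a_\ell}$; a shortest such path is an induced path of $G$, and its projection to $\{1,\dots,\ell\}$ is a walk with steps in $\{-1,0,1\}$ from $1$ to $\ell$, so it meets every $I_{a_s}$ and therefore has at least $\ell$ vertices — whence $\ell<t$.

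The remaining step is to extract from $\Gamma$ the desired $g$-spread increasing path of $G$: an increasing path $a_1<a_2<\cdots<a_L$ of $\Gamma$ with $a_{i+1}\ge a_i+2$ for all $i$ (so that an edge of $G$ witnessing $a_i\sim_\Gamma a_{i+1}$ automatically joins two vertices at distance $\ge g$ along $P$), together with a consistent choice of one vertex $w_i\in I_{a_i}$ per index with $w_iw_{i+1}\in E(G)$; then $w_1,w_2,\dots,w_L$ is a $g$-spread increasing path, truncated to $N$ vertices. I expect this extraction to be the main obstacle: it is precisely here that the absence of an induced $P_t$ in $\Gamma$ must be used (to rule out that $\Gamma$ behaves like $1,2,\dots,r$ with only short-range chords, which would preclude long jumps), and one must also resolve, inside each interval, the compatibility between the endpoint used to enter $I_{a_i}$ and the one used to leave it — being forced to traverse a $P$-subpath inside an interval would destroy the $g$-spread property of the lifted path. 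Carrying this out, together with the bookkeeping that one reaches $L\ge N$ and that the final gap is at least $n/mt$, is the delicate part; the degenerate regime, roughly $n<mt$ (where the asserted gap is below $1$ and the statement is essentially vacuous), is handled separately.
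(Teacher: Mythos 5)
Your first paragraph is a correct and clean observation: if one can find a $g$-spread increasing path $Q$ of order $N=(m-1)/2$ with $g\ge n/mt$, then the dichotomy ``$(G[S],Q)$ contains $H$ / avoids $H$'' finishes the job, because the first branch yields the pattern with large gap and the second contradicts the hypothesis via the definition of $g_H$. The problem is that this reduction overshoots: a $g$-spread increasing path of order $N$ need not exist, so the object you intend to produce in the second part of the argument simply isn't there. Consider the ordered half-graph of Example~1 in the paper (Hamiltonian path $v_1,\dots,v_n$, chords exactly from each odd index to each larger even index). Every non-$P$ edge goes from an odd vertex to a larger even vertex, so any increasing path whose consecutive vertices are at $P$-distance $\ge 2$ must alternate odd--even along chords and therefore has at most $2$ vertices. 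Yet this graph satisfies the lemma's hypotheses for suitable parameters (for instance $H$ a single edge, so $g_H(N)=N$; take $t=5$, $m=11$, noting the half-graph has no induced path on $5$ vertices), and the lemma's conclusion does hold there (a single edge with huge gap is easy to find). So your Phase~B is not merely ``delicate'': the $g$-spread increasing path of order $N$ you are hunting for does not exist, and no amount of work on the meta-graph $\Gamma$ will produce it. Your last paragraph already senses this (``being forced to traverse a $P$-subpath inside an interval would destroy the $g$-spread property''), but that is precisely an unavoidable obstruction, not a bookkeeping issue.

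The paper's proof sidesteps this by choosing, as meta-vertices, increasing \emph{subpaths} $P_i'$ of $G$ (for odd $i$) rather than single vertices, so that the lifted object is allowed to traverse inside a meta-vertex. The gaps are manufactured explicitly: for each even $i$ a shortest increasing path $Q_i$ across $P_i$ has fewer than $t$ vertices, hence contains an edge of $P$-length $\ge n/mt$; the two halves $Q_i^-$, $Q_i^+$ are glued onto the neighbouring odd blocks, so that consecutive $P_i'$, $P_{i+2}'$ are separated by a $P$-gap $\ge n/mt$. Then the dichotomy is run on the meta-graph $G'$ (whose Hamiltonian path has $\ge (m-1)/2$ vertices): if $(G',P')$ avoids $H$, a long induced meta-path lifts to a long (not $g$-spread!) induced path of $G$, contradiction; otherwise one takes, for each edge of $H$ in $G'$, a single witnessing $G$-edge, and because $H$ is a matching the chosen endpoints give a copy of $H$ in $G-E(P)$, whose gap $\ge n/mt$ is inherited from the gaps between blocks. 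So the correct idea is close to yours but with two crucial changes: meta-vertices are subpaths, and the gap is created before building the meta-graph rather than extracted afterwards from $\Gamma$.
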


\begin{proof}
We divide $P$ into $m$ vertex-disjoint subpaths $P_1,\ldots,P_m$, each of order
 at least $\lfloor n/m\rfloor$. For each path $P_i$, let $v_{a[i]}$ and $v_{b[i]}$ be the smallest
and largest vertices of $P_i$, respectively. Note that $b[i]+1=a[i+1]$ for any $1\le i \le m-1$. For each even $i$, we consider a shortest increasing path $Q_i$ in
$G$ between
$v_{b[i-1]}$ and $v_{a[i+1]}$, whose internal vertices are all in
$P_i$. Note that $Q_i$ is an induced path in $G$, and thus contains
fewer than $t$ vertices. It follows that  $Q_i$ contains two
consecutive vertices $v_{\alpha[i]}$ and $v_{\beta[i]}$ such that
$\beta[i]-\alpha[i]\ge (\lfloor n/m\rfloor +1)/t\ge n/mt$. We divide $Q_i$ into two vertex-disjoint subpaths
$Q_i^-$ (with ends $v_{b[i-1]}$ and $v_{\alpha[i]}$) and $Q_i^+$ (with
ends $v_{\beta[i]}$ and $v_{a[i+1]}$).
We define $P_1'$ as the concatenation of $P_1$ and $Q_2^-$, and for each odd $3\le i\le m-1$ we define $P_i'$ as
the concatenation of $Q_{i-1}^+$, $P_i$, and $Q_{i+1}^-$. Note that the
concatenation of all paths $P_i'$ ($1\le i\le m-1$ odd) is
a path in $G$, which is increasing with respect to $P$.

We now define
a graph $G'$ with vertex set  $v_1',v_3',\ldots,v_{s}'$ (where
$s$ is the largest odd integer with $s\le m-1$), with
an edge between $v_i'$ and $v_j'$ in $G'$ if and only there is an edge
between $P_i'$ and $P_j'$ in $G$. By definition,
$P':=v_1',v_3',\ldots,v_{s}'$ is a Hamiltonian path in $G'$, on
$s\ge \tfrac{m-1}2$ vertices. Assume for the sake of contradiction
that $(G',P')$ avoids the pattern $H$. Then
$G'$ contains an induced path $Q'$ of order at least
$g_H(\tfrac{m-1}2)$. By definition of $G'$, it follows that $G$ also
contains an induced path $Q$ of order at least $g_H(\tfrac{m-1}2)\ge
t$. This induced path $Q$ in $G$ is obtained from $Q'$ as follows. Let
$Q'=v'_{c[1]}, v'_{c[2]}, \ldots, v'_{c[q]}$. Then we define $Q$ as a path in
$G$ of minimum order with the property that $Q$ is a concatenation of $q$ subpaths
$Q_1,\ldots,Q_q$ where for each $1\le i \le q$, all the vertices of
$Q_i$ lie on $P_{c[i]}'$. The existence of $Q'$ implies the existence of
such a path $Q$ (as each $P_{c[i]}'$ is a Hamiltonian graph in
$G$), and the minimality of $Q$ together with the property that $Q'$
is an induced path in $G'$ imply that $Q$ is an induced path in
$G$. We have obtained an induced path $Q$ of order $q \ge g_H(\tfrac{m-1}2)\ge
t$ in $G$, which is a contradiction. Hence, $(G',P')$ contains
the pattern $H$. As $H$ is a matching, $(G,P)$ also contains the
pattern $H$. Moreover, it follows from the definition of $G'$ that the
gap of the
corresponding 
pattern $H$ in $G$ is at least $n/mt$.
\end{proof}

Note that if $(G,P)$ contains a pattern $H$ with gap at least
$h+1$, then it contains the pattern $H^{+h}$ obtained from $H$ by adding $h$
isolated vertices between all pairs of consecutive vertices of $H$ (note that $H^{+h}$ contains $h(|V(H)|-1)+|V(H)|$ vertices and $|E(H)|$ edges). We
obtain the following simple corollary of Lemma \ref{lem:gap}.

\begin{corollary}\label{cor:deg0}
  Let $H$ be an ordered matching and $h\ge 1$ be a fixed  integer.
  \begin{itemize}
    \item If
$g_H(n)=\Omega(n^c)$ for some real number $0<c \le 1$, then
$g_{H^{+h}}(n)=\Omega(n^{c/(1+c)})$. \item If  $g_H(n)=\Omega((\log n)^c)$ for some real number $c>0$, then
$g_{H^{+h}}(n)=\Omega((\log n)^c)$.
\end{itemize}
\end{corollary}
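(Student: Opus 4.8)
The plan is to derive both items from \Cref{lem:gap} together with the observation stated just before \Cref{cor:deg0}, namely that a pattern $H$ occurring with gap at least $h+1$ automatically yields the pattern $H^{+h}$. I would fix a graph $G$ with a Hamiltonian path $P=v_1,\ldots,v_n$ that avoids $H^{+h}$, let $\ell$ be the order of a longest induced path of $G$, and aim to bound $\ell$ from below. Setting $t:=\ell+1$, the graph $G$ has no induced path of order $t$, so \Cref{lem:gap} applies to every integer $m$ for which $g_H(\tfrac{m-1}2)\ge t$ and produces a copy of $H$ as a pattern in $(G,P)$ with gap at least $n/(mt)$. Since $(G,P)$ avoids $H^{+h}$, this gap must be at most $h$, so that for every such $m$,
\[
n\;\le\;h\,m\,t .
\]
The rest of the argument is to choose $m$ as small as the hypothesis on $g_H$ permits and read off a lower bound on $t=\ell+1$.

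In the polynomial regime $g_H(n)=\Omega(n^c)$ there are constants $a>0$ and $n_0$ with $g_H(k)\ge ak^c$ for $k\ge n_0$; I would take $m$ to be the least odd integer with $\tfrac{m-1}2\ge\max\{n_0,(t/a)^{1/c}\}$, which guarantees $g_H(\tfrac{m-1}2)\ge t$ and satisfies $m=O(1+t^{1/c})$ (implied constant depending only on $a,c,n_0$). Using $t\ge1$ this gives $mt=O(t^{1+1/c})$, so the displayed inequality forces $n=O\bigl((\ell+1)^{1+1/c}\bigr)$, that is $\ell=\Omega\bigl(n^{c/(1+c)}\bigr)$. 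In the logarithmic regime $g_H(n)=\Omega((\log n)^c)$ one has $g_H(k)\ge a(\log k)^c$ for $k\ge n_0$, and I would instead take the least odd $m$ with $\tfrac{m-1}2\ge\max\{n_0,2^{(t/a)^{1/c}}\}$, so that $m=O\bigl(2^{(t/a)^{1/c}}\bigr)$; then $n\le hmt$ rearranges, after taking logarithms, to
\[
(t/a)^{1/c}\;\ge\;\log n-\log t-O(1).
\]
At this point I would first observe that we may assume $\ell<\sqrt n$ (otherwise $\ell\ge\sqrt n\ge(\log n)^c$ for large $n$ and there is nothing to prove), so that $\log t\le\tfrac12\log n+O(1)$ and the right-hand side is at least $\tfrac14\log n$ once $n$ is large; this yields $\ell+1\ge a(\tfrac14\log n)^c$, i.e.\ $\ell=\Omega((\log n)^c)$.

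The hard part is the logarithmic case, where $mt$ is almost as large as $n$ itself, so one must ensure that the error term $\log(mt)=\log\bigl(hm(\ell+1)\bigr)$ does not absorb the main term $\log n$; the reduction to $\ell<\sqrt n$ (or any sufficiently slowly growing a priori bound on $\ell$) is precisely what handles this. Everything else is routine bookkeeping: tracking the multiplicative constants and thresholds hidden in the $\Omega$'s, dealing with the harmless edge cases in which $\ell$ or $m$ would be forced to be very small (these only bound $n$ by a constant, hence are irrelevant asymptotically), and keeping $m$ odd so that $\tfrac{m-1}2$ is a legitimate argument for $g_H$.
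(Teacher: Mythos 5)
The proposal is correct and takes essentially the same approach as the paper: both rely on \Cref{lem:gap} together with the observation that a copy of $H$ with gap at least $h+1$ yields the pattern $H^{+h}$. You let $t=\ell+1$ be determined by $G$ and solve for a lower bound on $\ell$, whereas the paper chooses $t$ and $m$ explicitly as functions of $n$ up front and argues by contradiction; the underlying calculation (including the need for extra care in the logarithmic case, where $mt$ nearly saturates $n$) is the same.
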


\begin{proof}
Assume first that $g_H(n)\ge \alpha \cdot n^c$ for some real numbers
$\alpha>0$ and  $0<c \le
1$, and any sufficiently large $n$. Let $\beta:=\tfrac{\alpha^{1/(1+c)}}{(3h)^{c/(1+c)}}$, and
$t:=\beta\cdot n^{c/(1+c)}$. We will prove that $g_{H^{+h}}(n)\ge
t$ for any sufficiently large $n$. Let $G$ be a graph with a
Hamiltonian path $P=v_1,\ldots,v_n$ (with $n$ sufficiently large), and
assume
that $(G,P)$ avoids the pattern $H^{+h}$. Let
$m:=2(\beta/\alpha)^{1/c}\cdot n^{1/(1+c)}+1$. Then
\[g_H(\tfrac{m-1}2)\ge \alpha \cdot ((\beta/\alpha)^{1/c}\cdot
n^{1/(1+c)}))^c = \alpha \cdot (\beta/\alpha)\cdot
n^{c/(1+c)}= t.\]
Assume for the sake of contradiction that $G$ has
no induced path of order $t$. By Lemma \ref{lem:gap}, $(G,P)$
contains  $H$ as a pattern with gap at least \[\frac{n}{mt}\ge
  \frac{n}{(2(\beta/\alpha)^{1/c}\cdot n^{1/(1+c)}+1)\cdot \beta\cdot
    n^{c/(1+c)} }\ge
  \frac{\alpha^{1/c}}{2\beta^{1+1/c}+\alpha^{1/c}\cdot \beta \cdot n^{-1/(c+1)}}\ge
  \frac{\alpha^{1/c}}{3\beta^{1+1/c}},\]

  where the final inequality
holds for sufficiently large $n$. As
$\tfrac{\alpha^{1/c}}{3\beta^{1+1/c}}=h$, we have proved that $(G,P)$
contains the pattern $H^{+h}$, a contradiction.

\medskip

Assume now that $g_H(n)\ge \alpha \cdot (\log n)^c$ for some real numbers
$\alpha>0$ and  $c>0$, and any sufficiently large $n$. Let
$t:= \tfrac{\alpha}2\cdot (\log n)^{c}$. We will prove that $g_{H^{+h}}(n)\ge
t$ for any sufficiently large $n$. Let $G$ be a graph with a
Hamiltonian path $P=v_1,\ldots,v_n$ (with $n$ sufficiently large), and
assume
that $(G,P)$ avoids the pattern $H^{+h}$. Let $m:=2\exp(2^{-1/c}\log
n)+1$. Then \[g_H(\tfrac{m-1}2)\ge \alpha \cdot
  (2^{-1/c}\log n)^{c}  = t.\] Assume for the sake of contradiction that $G$ has
no induced path of order $t$. By Lemma \ref{lem:gap}, $(G,P)$
contains  $H$ as a pattern with gap at least \[\frac{n}{mt}\ge
  \frac{n}{(2\exp(2^{-1/c}\log
n)+1)\cdot \alpha \cdot
  (2^{-1/c}\log n)^{c}  }.\] The denominator is $o(n)$ for any $c>0$
and thus $n/mt$ is larger than $h$ for any sufficiently large
$n$. Thus $(G,P)$
contains the pattern $H^{+h}$, a contradiction.
\end{proof}

Observe that for any ordered matching $M$, there is an ordered perfect matching $H$
and an integer $h$ such that $M$ is an induced ordered subgraph of
$H^{+h}$ (it suffices to define $h$ as the maximum number of isolated
vertices between two vertices of degree 1 in $M$). We obtain the
following as a direct consequence.

\begin{corollary}\label{cor:deg0b}
Let $H$ be an ordered matching and let $H^-$ be the ordered perfect
matching obtained by deleting all vertices of degree 0 in $H$.
\begin{itemize}
  \item If
    $g_{H^-}(n)=\Omega(n^c)$ for
    some $0<c\le 1$, then $g_H(n)=\Omega(n^{c/(1+c)})$.
  \item If  $g_{H^-}(n)=\Omega((\log n)^c)$ for some $c>0$, then
$g_{H}(n)=\Omega((\log n)^c)$.
  \end{itemize}
\end{corollary}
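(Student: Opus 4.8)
The plan is to deduce the statement directly from Corollary~\ref{cor:deg0}, using the observation recorded just before it. Fix the ordered matching $H$ and let $H^-$ be its core perfect matching (obtained by deleting the degree-$0$ vertices), and let $h$ be the largest number of isolated vertices of $H$ that occur consecutively in the order; here I count also the (possibly empty) block of isolated vertices preceding the first degree-$1$ vertex and the block following the last one. I first observe that $H$ is an ordered subgraph of $(H^-)^{+h}$, at least in the case where $H$ has no isolated vertices at the two extremities of its order: the $i$-th degree-$1$ vertex of $H$ must map to the $i$-th degree-$1$ vertex of $(H^-)^{+h}$ (since the two graphs have the same edges in the same crossing pattern), and each block of at most $h$ isolated vertices of $H$ can then be slotted into the corresponding block of $h$ isolated vertices of $(H^-)^{+h}$. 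This is exactly the observation preceding the statement, where in fact the embedding can be taken induced; we will only use that it is an ordered subgraph.

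Granting this, we get $g_H(n)\ge g_{(H^-)^{+h}}(n)$ for all $n$: if $(G,P)$ avoids $H$ then, $H$ being an ordered subgraph of $(H^-)^{+h}$, it also avoids $(H^-)^{+h}$, hence has an induced path of order $g_{(H^-)^{+h}}(n)$. Now apply Corollary~\ref{cor:deg0} to the perfect matching $H^-$ and the integer $h$. If $g_{H^-}(n)=\Omega(n^c)$ for some $0<c\le 1$, it yields $g_{(H^-)^{+h}}(n)=\Omega(n^{c/(1+c)})$, whence $g_H(n)=\Omega(n^{c/(1+c)})$; and if $g_{H^-}(n)=\Omega((\log n)^c)$ for some $c>0$, it yields $g_{(H^-)^{+h}}(n)=\Omega((\log n)^c)$, whence $g_H(n)=\Omega((\log n)^c)$. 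This gives both items.

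The one point requiring care is the possible presence of isolated vertices of $H$ before its first, or after its last, degree-$1$ vertex, since the operation $(\cdot)^{+h}$ inserts new vertices only between consecutive vertices and leaves the two ends untouched. I would handle this by replacing $H^-$ throughout by the perfect matching $H':=K_2\cdot H^-\cdot K_2$ obtained by prepending and appending one extra edge: then $H$ is an ordered subgraph of $(H')^{+h}$ (the two fresh gaps created by the extra edges absorb the extremal isolated vertices of $H$), and a double application of Lemma~\ref{lem:concat} together with $g_{K_2}(n)=n$ gives $g_{H'}(n)\ge g_{H^-}(\lfloor n/4\rfloor)$ for all large $n$, so that $g_{H'}$ is polynomial (resp.\ polylogarithmic) with the same exponent whenever $g_{H^-}$ is. Repeating the two implications above with $H'$ in place of $H^-$ then produces exactly the claimed bounds. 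I do not expect a genuine obstacle: the whole proof is a short reduction to Corollary~\ref{cor:deg0}, and the only real work is this bookkeeping around extremal isolated vertices.
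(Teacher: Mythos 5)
Your proof is correct and follows the same route the paper intends (reduce $H$ to its core perfect matching via the observation that $H$ embeds in $(H^-)^{+h}$, then invoke Corollary~\ref{cor:deg0}), but you are right to flag the extremal-isolated-vertex issue, which the paper glosses over: $(H^-)^{+h}$ only inserts isolated vertices strictly \emph{between} consecutive vertices of $H^-$, so if $H$ has isolated vertices before its first or after its last matched vertex, $H$ is genuinely not an ordered subgraph of $(H^-)^{+h}$ for any $h$, since an order-preserving edge-preserving injection must fix $V(H^-)$ and hence has nowhere to send the extremal isolated vertices. Your fix of replacing $H^-$ by $H' := K_2 \cdot H^- \cdot K_2$, verifying via two applications of Lemma~\ref{lem:concat} and $g_{K_2}(n)=n$ that $g_{H'}(n)\ge g_{H^-}(\lfloor n/4\rfloor)$ (which preserves both the polynomial rate with the same exponent and the polylogarithmic rate), and then applying Corollary~\ref{cor:deg0} to $H'$ is clean and closes the gap; the rest of the argument ($g_H(n) \ge g_{(H')^{+h}}(n)$ because an ordered subgraph of an avoided pattern is itself avoided) is exactly the paper's intended deduction.
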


Let $H$ be an ordered graph with ordering $u_1,\ldots,u_k$. We say
that two edges $e,f$ of $H$ are \emph{crossing} if there exist $1\le
a<b<c<d\le k$ such that $e=u_au_c$ and $f=u_bu_d$. If $H $ has no pair
of crossing edges then we say that $H$ is
\emph{non-crossing}.

\medskip

Using Corollary \ref{cor:deg0b}, we can restrict ourselves in the
remainder of this
section to ordered \emph{perfect} matchings.
Non-crossing ordered perfect matchings (i.e., 1-regular graphs) are in bijection with Dyck
words, in particular every non-empty non-crossing ordered perfect matching $C=u_1,\ldots,u_k$ can be  uniquely
decomposed into two (possibly empty) non-crossing ordered perfect matchings
$A$ and $B$ such that $C$ contains an edge $u_1u_i$ (for some $i$),
$A$ is the ordered subgraph of $C$ induced by $u_2,\ldots,u_{i-1}$,
and $B$ is the ordered subgraph of $C$ induced by $u_{i+1},\ldots,
u_k$. We define the \emph{depth} of a non-crossing ordered perfect matching
$C$ inductively as follows: we set $\mathrm{depth}(C):=0$ if  $C$ is empty, and otherwise we decompose
$C$ into two non-crossing ordered perfect matchings $A$ and $B$ as above and we
set $\mathrm{depth}(C):=\max\{\mathrm{depth}(A)+1,\mathrm{depth}(B)\}$.

\medskip

We show that for ordered perfect matchings $H$, the growth rate of $g_H$ depends
whether it has a pair of crossing edges or not. For non-crossing
perfect matchings, the growth is polynomial and the exponent depends only on the
depth of the perfect matching.

\begin{theorem}\label{tm:matchingBound}
Let $H$ be a non-empty non-crossing ordered perfect matching. Then $g_H(n)=\Omega(n^{1/d})$, where $d=\mathrm{depth}(H)$.
\end{theorem}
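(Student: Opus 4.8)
The plan is to prove the statement by induction on $d=\mathrm{depth}(H)$, exploiting the recursive decomposition of a non-crossing perfect matching into an inner part $A$ and an outer part $B$, together with Lemma~\ref{lem:gap} and Corollary~\ref{cor:deg0}. The base case is $d=1$: then $H$ is a disjoint union of non-crossing non-nested edges; the simplest such $H$ is a single edge $u_1u_2$, and more generally depth $1$ means $H=K_2\cdot K_2 \cdots K_2$ (possibly with isolated vertices after passing through $H^{+h}$), so $g_H(n)=\Omega(n)$ follows from Observation~\ref{obs:order}: if $(G,P)$ avoids even two consecutive non-edges this forces every non-path edge to be ``short'', and then a shortest path from $v_1$ to $v_n$ is long. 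Handling the precise depth-$1$ patterns (a matching of non-crossing, non-nested edges) via \Cref{lem:concat} reduces everything to the single edge case, which is exactly \Cref{obs:order} with $\ell$ bounded.

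For the inductive step, suppose the result holds for all non-crossing perfect matchings of depth at most $d-1$, and let $H$ have depth $d$ with decomposition $u_1u_i$, inner matching $A$ on $u_2,\dots,u_{i-1}$ of depth at most $d-1$, and outer matching $B$ on $u_{i+1},\dots,u_k$ of depth at most $d$. Consider a graph $G$ with Hamiltonian path $P=v_1,\dots,v_n$ avoiding $H$ as a pattern, and suppose for contradiction $G$ has no induced path of order $t$ with $t\approx n^{1/d}$. The idea is to run the argument of \Cref{lem:gap} with $m$ chosen so that $g_{H'}(\tfrac{m-1}2)\ge t$ for a suitable smaller pattern $H'$: I would like to find $A$ (or a related pattern) inside one of the subpaths $P_i$ with a large gap, and then use that gap to ``splice in'' the edge $u_1u_i$ and the outer part $B$. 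Concretely, by the inductive hypothesis $g_A$ and $g_B$ are polynomial with exponents $1/(d-1)$ and $1/d$ respectively, and I would like to combine a copy of $A$ found with large gap (so that its endpoints, being far apart along $P$, are joined by a long induced path — but that path is short by hypothesis, contradiction-style as in \Cref{lem:gap}) with a copy of $B$ to the right. The bookkeeping that the exponent stays $1/d$ is the content of the recursion: roughly, if finding $A$ costs a factor making the exponent $1/(d-1)$ but we only need to locate $A$ inside a fraction of length $\sim n^{1-1/d}$ of $P$, then $(n^{1-1/d})^{1/(d-1)} = n^{1/d}$, which is exactly the target.

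The main obstacle — and the step I expect to require the most care — is organizing the recursion so that the \emph{gap} produced at each level is large enough to accommodate the edge $u_1u_i$ (an isolated-vertex padding issue handled by Corollary~\ref{cor:deg0}) and simultaneously leaves enough room on the right for the recursive search for $B$. In other words, one has to interleave two applications of \Cref{lem:gap}: one in the ``nested'' direction to dig down into $A$, and one in the ``sequential'' direction (akin to \Cref{lem:concat}) to peel off $B$. I would set this up as a single induction with statement ``$g_H(n)\ge c_d\, n^{1/d}$'' and carefully track the constants $c_d$ (which will decay, e.g.\ geometrically or like $1/d!$, in $d$), verifying at each step that the gap $n/mt$ produced by \Cref{lem:gap} exceeds the padding $h$ needed and that the residual graph on which $B$ is sought still has $\Omega$ of the right length. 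The crossing-free hypothesis is used precisely to guarantee that the copy of $A$ found inside $P_i$ does not interact (via pattern edges) with the vertices used for $u_1$, $u_i$, and $B$, so that assembling the pieces does indeed produce a copy of $H$ and hence the desired contradiction.
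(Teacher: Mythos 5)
There is a genuine gap in the inductive step, on two levels. First, you induct on the depth $d$, but in the decomposition of $H$ into inner part $A$ (depth $\le d-1$) and outer part $B$ (depth $\le d$), the outer part $B$ can have depth \emph{equal} to $d$, so your inductive hypothesis simply does not apply to $B$. The paper avoids this by inducting on $|E(H)|$ instead: when $B$ is non-empty, $H = \widehat{A}\cdot B$ where both $\widehat{A}$ and $B$ have strictly fewer edges and depth $\le d$, so the edge-induction hypothesis yields $\Omega(n^{1/d})$ for each and Lemma~\ref{lem:concat} concludes; when $B$ is empty one is in the ``nested'' case discussed next.

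The more serious issue is the mechanism you propose for the nested case. You want to first locate a copy of $A$ with large gap via Lemma~\ref{lem:gap}, then ``splice in'' the enclosing edge $u_1u_i$ and a copy of $B$ so as to manufacture a copy of $H$ and reach a contradiction. But Lemma~\ref{lem:gap} only guarantees large gaps between \emph{consecutive} vertices of the found copy of $A$; it produces no edge of $G$ that wraps around the whole copy, and there is no reason such an edge should exist, let alone with a copy of $B$ further to the right. The paper's argument runs in the opposite direction and is both simpler and correct: take the edge $v_iv_j$ of $G$ with $j-i$ \emph{maximal}. If $j-i\le n^{1-1/d}$, Observation~\ref{obs:order} immediately gives an increasing induced path of order $\Omega(n^{1/d})$; otherwise the interval $\{v_{i+1},\dots,v_{j-1}\}$ has more than $n^{1-1/d}$ vertices and must \emph{avoid} $A$ (else $A$ together with $v_iv_j$ yields $H$), so the induction hypothesis applied inside produces an induced path of order $\Omega\bigl((n^{1-1/d})^{1/(d-1)}\bigr) = \Omega(n^{1/d})$. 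Your exponent arithmetic is exactly the one the paper uses, but the top-down ``longest edge, recurse inside'' argument is what makes it go through; neither Lemma~\ref{lem:gap} nor Corollary~\ref{cor:deg0} (which concerns padding by isolated vertices, a non-issue for a \emph{perfect} matching) is invoked in the paper's proof of this theorem.
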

\begin{proof}
We prove the result by induction on $|E(H)|$. Let $d=\mathrm{depth}(H)$. If $|E(H)|=1$ then $d = 1$ and  $g_H(n) = n$ (since for any pair $(G,P)$ avoiding $H$, $E(G)=E(P)$). So we can assume that $|E(H)|\ge 2$.

Let $(A, B)$ be the unique decomposition of $H$ into two ordered perfect matchings described above. If $B$ is non-empty then $H$
is the concatenation of two non-empty ordered perfect matchings, each of
depth at most $d$, and each with fewer edges
than $H$. By the induction hypothesis and Lemma \ref{lem:concat}, we
obtain $g_H(n)=\Omega(n^{1/d})$, as desired.

So we can assume that $B$ is empty. Note that in this case $\mathrm{depth}(A)=d-1$. Let $G$ be a graph with a Hamiltonian path $P=v_1,\ldots,v_n$, and let $v_iv_j$ be an edge of $G$ with $i<j$ such that $j-i$ is maximal. If $j-i \le n^{1 - 1/ d}$, then by \Cref{obs:order}, there is an increasing induced path of order $\Omega(n^{1/ d})$ in $G$.
Otherwise, consider the subgraph $G'$ of $G$ induced by $\{v_{i+1}, \ldots,v_{j-1}\}$. The graph $G'$ avoids $A$, since if $A$ appears in $G'$ then together with the edge $v_iv_j$, one finds $H$ as a pattern in $(G,P)$. Since $|E(A)|<|E(H)|$ it follows from the induction hypothesis that  $G'$ (and thus $G$) contains an induced path of order $\Omega ( (n^{1 - \frac 1 d})^{\frac 1 {d-1}} ) = \Omega(n^{1 /{d}})$.
\end{proof}

Using Corollary \ref{cor:deg0b}, we obtain the following as a direct
consequence.

\begin{theorem}\label{thm:ncom}
Let $H$ be a non-empty non-crossing ordered matching, and let $H^-$ be
the ordered perfect matching obtained from $H$ by removing all isolated
vertices. Then
$g_H(n)=\Omega(n^{1/d})$, where $d=\mathrm{depth}(H^-)$.
\end{theorem}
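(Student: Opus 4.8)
The plan is to derive Theorem~\ref{thm:ncom} directly from Theorem~\ref{tm:matchingBound} together with Corollary~\ref{cor:deg0b}, reducing the case of a non-crossing ordered matching to the case of a non-crossing ordered \emph{perfect} matching. First I would recall that $H^-$ is obtained from $H$ by deleting all isolated (degree-$0$) vertices, so $H^-$ is an ordered perfect matching; moreover, deleting isolated vertices does not create or destroy crossings, so $H^-$ is still non-crossing, and $\mathrm{depth}(H^-)$ is well-defined. Since $H$ is non-empty and non-crossing, $H^-$ is non-empty (as soon as $H$ has at least one edge; if $H$ had no edge then $g_H(n)$ would be trivially large and the statement holds vacuously, but in fact the interesting case is $|E(H)|\ge 1$).

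Next, apply Theorem~\ref{tm:matchingBound} to $H^-$: since $H^-$ is a non-empty non-crossing ordered perfect matching of depth $d=\mathrm{depth}(H^-)$, we get $g_{H^-}(n)=\Omega(n^{1/d})$. Now invoke the first bullet of Corollary~\ref{cor:deg0b} with $c=1/d$ (which satisfies $0<c\le 1$): from $g_{H^-}(n)=\Omega(n^{1/d})$ we conclude $g_H(n)=\Omega\bigl(n^{(1/d)/(1+1/d)}\bigr)=\Omega(n^{1/(d+1)})$.

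At this point there is a discrepancy between what I have just derived, namely $g_H(n)=\Omega(n^{1/(d+1)})$, and the claimed bound $g_H(n)=\Omega(n^{1/d})$. The main obstacle — and the point where the argument needs more care than a black-box application — is this loss of one in the exponent coming from Corollary~\ref{cor:deg0b}. I expect the fix is to not pass through $H^-$ abstractly but rather to reprove the bound for $H$ itself by an induction mirroring the proof of Theorem~\ref{tm:matchingBound}, keeping track of the isolated vertices explicitly: when decomposing, a block of isolated vertices of $H$ sitting between two matched vertices can be absorbed ``for free'' using the gap argument of Lemma~\ref{lem:gap} without paying the $c\mapsto c/(1+c)$ price, because Lemma~\ref{lem:gap} produces a pattern with gap $\Omega(n/mt)$ and one only needs this gap to exceed a \emph{fixed} constant $h$ (the maximum number of consecutive isolated vertices in $H$), which happens already when $m$ and $t$ are polynomially small in $n$. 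Concretely, I would redo the induction of Theorem~\ref{tm:matchingBound} for a non-crossing ordered matching $H$ with $\mathrm{depth}(H^-)=d$: the concatenation step uses Lemma~\ref{lem:concat} exactly as before; the base case where the decomposition has empty right part uses the maximal-edge argument with $\{v_{i+1},\dots,v_{j-1}\}$ avoiding the sub-matching $A$, where now $A$ may carry isolated vertices, but removing the outermost edge drops the depth of the perfect-matching part by exactly one and the recursion $\Omega\bigl((n^{1-1/d})^{1/(d-1)}\bigr)=\Omega(n^{1/d})$ goes through identically; the only genuinely new ingredient is that isolated vertices of $H$ on the outside, or between the two matched endpoints of the outer edge, are handled by first applying Lemma~\ref{lem:gap} (or equivalently Corollary~\ref{cor:deg0} in its $\Omega(n^c)$ form but arguing that for a \emph{fixed} number $h$ of added isolated vertices the exponent is preserved, not degraded) to create enough spacing. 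This way the exponent $1/d$ is retained, and the theorem follows.
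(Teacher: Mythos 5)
You have spotted a genuine issue with the paper's proof of this theorem. The paper presents Theorem~\ref{thm:ncom} as ``a direct consequence'' of Corollary~\ref{cor:deg0b}, but as you compute, applying the first bullet of Corollary~\ref{cor:deg0b} with $c=1/d$ yields $g_H(n)=\Omega(n^{c/(1+c)})=\Omega(n^{1/(d+1)})$, which is strictly weaker than the stated $\Omega(n^{1/d})$. So the paper's one-line derivation does not establish the claimed exponent; your observation of the discrepancy is correct and is not resolved in the paper's text. The theorem itself is nevertheless true, but it needs a direct argument rather than the black-box corollary.

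Where your proposal goes astray is in the proposed repair. You suggest absorbing the isolated vertices via Lemma~\ref{lem:gap} (or a sharpened Corollary~\ref{cor:deg0}) and claim the exponent would be preserved because one only needs a gap of constant size $h$. But the $c\mapsto c/(1+c)$ loss is intrinsic to the gap argument, not an artifact of sloppy bookkeeping: to invoke Lemma~\ref{lem:gap} one must choose $m$ with $g_H\bigl(\tfrac{m-1}{2}\bigr)\ge t$, and given only the bound $g_H(n')=\Omega(n'^{1/d})$ this forces $m=\Omega(t^{d})$; the gap requirement $n/(mt)\ge h$ then forces $t^{d+1}=O(n)$, so no choice of $m,t$ certifies an induced path longer than $O(n^{1/(d+1)})$ by this route. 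Hence the final step of your plan, ``this way the exponent $1/d$ is retained,'' does not follow from what precedes it.

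The correct repair is more elementary and sidesteps Lemma~\ref{lem:gap} entirely. The only point at which isolated vertices of $H$ obstruct the induction of Theorem~\ref{tm:matchingBound} is at the two extremes of $H$. Write $H=I_0\cdot H'\cdot I_1$ where $I_0,I_1$ are the maximal runs of degree-$0$ vertices at the ends, so that $H'$ begins and ends at a degree-one vertex and $H'^-=H^-$. Then $g_H(n)\ge g_{H'}(n-|I_0|-|I_1|)$: if a copy of $H'$ appears as a pattern inside $G$ restricted to $v_{|I_0|+1},\dots,v_{n-|I_1|}$, one prepends $v_1,\dots,v_{|I_0|}$ and appends $v_{n-|I_1|+1},\dots,v_n$ to obtain $H$ (the degree-$0$ vertices impose no adjacency constraints). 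Since $|I_0|+|I_1|$ is a constant depending only on $H$, this trimming loses nothing asymptotically. One then runs the induction of Theorem~\ref{tm:matchingBound} on $H'$: in the base case $|E(H')|=1$, any non-path edge in the trimmed graph spans at most $|I'|+1$ positions (where $I'$ is the run of isolated vertices between the two matched endpoints), so Observation~\ref{obs:order} gives $\Omega(n)$; if $H'$ splits as a concatenation of two non-crossing matchings each with an edge, Lemma~\ref{lem:concat} applies since both halves have $\mathrm{depth}(\cdot^-)\le d$; and if $u_1$ is matched to the last vertex $u_k$ of $H'$, the maximal-edge argument applies verbatim with $A$ now an arbitrary non-crossing matching of fewer edges and $\mathrm{depth}(A^-)=d-1$, the interior isolated vertices of $A$ being handled by the induction hypothesis, giving $\Omega\bigl((n^{1-1/d})^{1/(d-1)}\bigr)=\Omega(n^{1/d})$ as before.
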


We have just proved that excluding patterns consisting of non-crossing ordered matchings provides
polynomial bound on the order of an induced path. We now show that
excluding (possibly crossing) ordered matchings as patterns  gives polylogarithmic bounds. 

\medskip

We start with the base case, where the forbidden pattern $M=$ \includegraphics[scale=1]{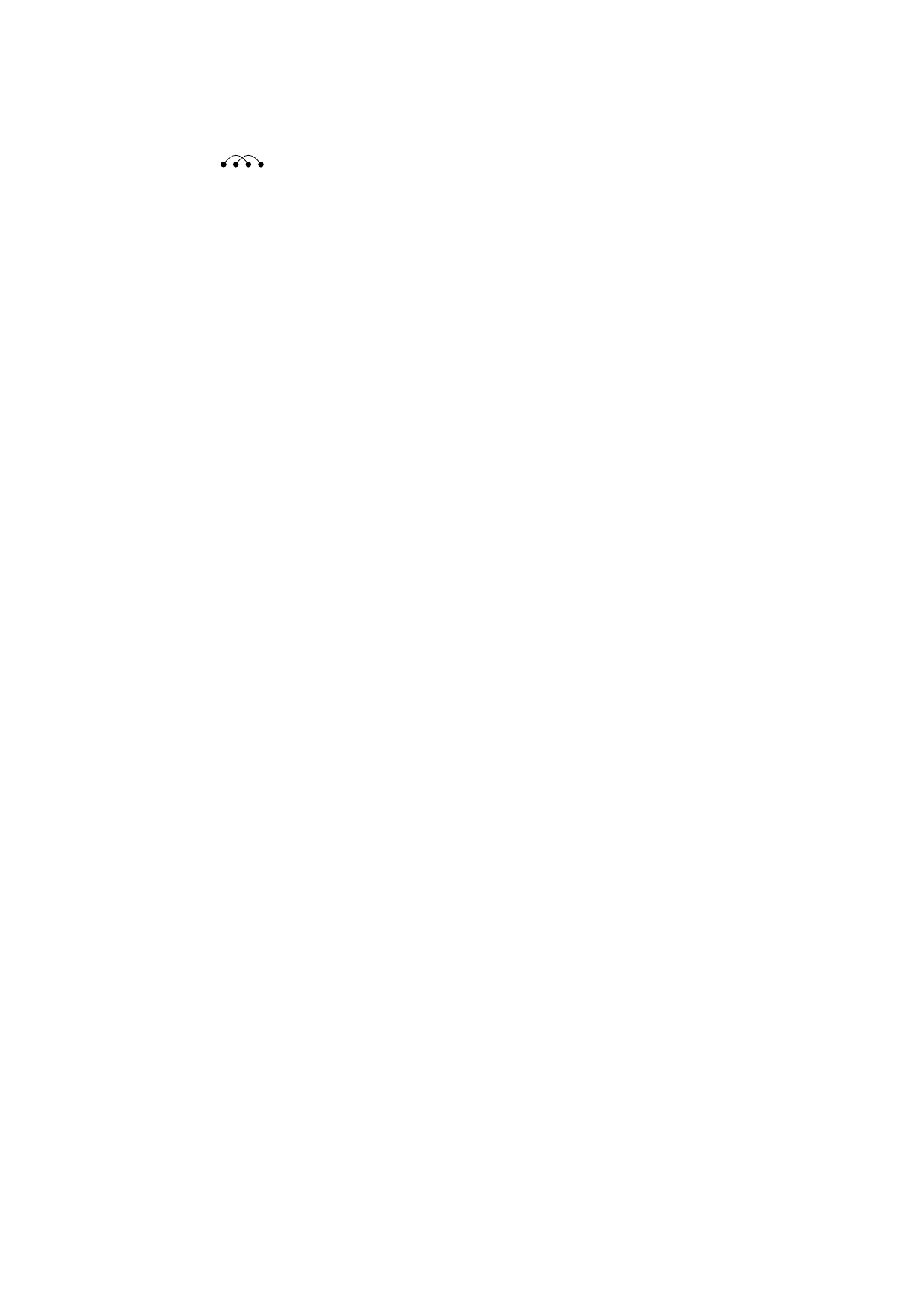} consists of four vertices and two crossing edges forming a matching. It was proved in \cite{esperet2017long} (using a different terminology) that in this case $g_M(n)\ge \tfrac12\log n$. For the sake of completeness, we give a significantly shorter and simpler proof.

\begin{lemma}\label{lem:1cross}
If $M=$ \includegraphics[scale=1]{patx.pdf}, then $g_M(n)\ge \tfrac12 \log n$.
\end{lemma}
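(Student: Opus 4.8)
First I would note that $(G,P)$ avoids the pattern $M=$\includegraphics[scale=1]{patx.pdf} exactly when the non-path edges of $G$ form a \emph{non-crossing} family of chords with respect to the order $v_1\prec\cdots\prec v_n$ given by $P$: there is no quadruple $a<b<c<d$ with $v_av_c,v_bv_d\in E(G)\setminus E(P)$. By \Cref{lem:concat}-type reasoning this is an outerplanar-like structure, and the engine of the proof is that it produces cut vertices. Concretely: if $v_j$ is the largest neighbour of $v_1$ in $G$, then no chord can cross $v_1v_j$, and one checks that this forces every edge joining $\{v_1,\dots,v_{j-1}\}$ to $\{v_{j+1},\dots,v_n\}$ to be incident with $v_j$; hence $G[\{v_j,\dots,v_n\}]$ is an induced subgraph attached to the rest of $G$ only at $v_j$, and in particular $v_1$ has \emph{no} neighbour in $\{v_j,\dots,v_n\}$ other than $v_j$. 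Symmetrically, the smallest neighbour $v_i$ of $v_n$ gives the analogous statement on the left.

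\textbf{The recursion.} The plan is to prove, by strong induction on $n$, the slightly stronger statement that $G$ contains an induced path of order at least $\tfrac12\log n$ having the first (equivalently, by reversal symmetry, the last) vertex of $P$ as an endpoint; this implies the lemma since $g_M\ge$ this quantity. The base cases $n$ small are immediate, as a single edge is already an induced path of order $2$. For the inductive step, assume first that $G[\{v_j,\dots,v_n\}]$ is ``large'', say on at least $\lceil n/4\rceil$ vertices. Then, viewing $v_j,\dots,v_n$ as a Hamiltonian path of this induced subgraph (whose first vertex is $v_j$), induction yields an induced path $Q$ of order at least $\tfrac12\log(n/4)=\tfrac12\log n-1$ with $v_j$ as an endpoint. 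Since $v_1$ is adjacent in $\{v_j,\dots,v_n\}$ only to $v_j$, prepending $v_1$ to $Q$ gives an induced path of order at least $\tfrac12\log n$ with $v_1$ as an endpoint. In slogan form this is the recursion $g'(n)\ge 1+g'(\lceil n/4\rceil)$, which already produces the claimed $\tfrac12\log n$; the role of \Cref{obs:order} is only to cover the easy case where $G$ has no chord at all.

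\textbf{The obstacle.} The difficulty is the remaining case, where $v_1v_j$ is a ``long'' chord so that $G[\{v_j,\dots,v_n\}]$ has fewer than $\lceil n/4\rceil$ vertices and the recursion above does not shrink the instance enough. This is where the non-crossing condition pays off a second time, now comparing the two ends: writing $v_i$ for the smallest neighbour of $v_n$, if $v_1v_n\notin E(G)$ then $v_1v_j$ and $v_iv_n$ would be a crossing pair unless $\{v_1,\dots,v_i\}$ is itself large (at least $\lceil n/4\rceil$ vertices), and in that case the mirror-image construction — prepend $v_n$ to a recursively found induced path of $G[\{v_1,\dots,v_i\}]$ ending at $v_i$ — does the job; the residual case $v_1v_n\in E(G)$ is handled identically after replacing $v_j$ by the largest neighbour of $v_1$ below $v_n$ and $v_i$ by the smallest neighbour of $v_n$ above $v_1$, a pair which again cannot cross. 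Thus every case reduces to the same ``prepend one vertex to an anchored recursive path'' move, the case split being driven solely by \emph{which} of the two natural subinstances is large. I expect the genuinely fiddly point to be the bookkeeping of sizes (the floors and ceilings around $n/4$) together with keeping track of which endpoint (first or last vertex of $P$) each subcall is anchored at, so that the prepended vertex really is nonadjacent to the rest of the recursive path. For comparison, a much cruder argument with essentially no case analysis gives only a weaker constant: letting $v_av_b$ be the non-path edge maximising $b-a$, either $b-a\le\sqrt n$, so all edges of $G$ span at most $\sqrt n$ positions and \Cref{obs:order} yields an induced path of order $\ge\sqrt n\ge\tfrac12\log n$, or $b-a>\sqrt n$ and $G[\{v_a,\dots,v_b\}]$ is a smaller instance on more than $\sqrt n$ vertices, giving by induction an induced path of order $\ge\tfrac12\log\sqrt n=\tfrac14\log n$; the refinement above is exactly what is needed to push $\tfrac14\log n$ up to $\tfrac12\log n$.
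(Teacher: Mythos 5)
Your approach differs genuinely from the paper's. The paper strengthens the induction hypothesis to track \emph{two} disjoint increasing induced paths $L$ (anchored at $v_1$) and $R$ (anchored at $v_n$) with $L < R$ and $|V(L)| + |V(R)| \ge \log n$, and recurses through the unique ``long'' maximal edge $v_iv_j$: the chain of $t\ge 2$ maximal edges supplies $t-1$ extra vertices, and $t-1\ge\log t$ exactly pays for the $\log t$ lost by shrinking to $G[\{v_i,\dots,v_j\}]$. You instead track a \emph{single} anchored path and recurse through the cut vertex $v_j$ given by $v_1$'s largest neighbour, with a size threshold around $n/4$.

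The difficulty you flag as ``fiddly bookkeeping'' is, I think, a genuine gap that your invariant cannot absorb. Your stronger statement is $\Phi(n)$: \emph{there is an induced path of order $\ge \tfrac12\log n$ with $v_1$ as an endpoint}. In your Case~1 ($G[\{v_j,\dots,v_n\}]$ large) you invoke the inductive hypothesis $\Phi$ on that subgraph to get a path anchored at $v_j$ and prepend $v_1$ — this re-establishes $\Phi(n)$. But in your obstacle case ($j$ close to $n$, so you pass to $G[\{v_1,\dots,v_i\}]$ and append $v_n$), the path you construct has $v_n$ as its endpoint, not $v_1$: you have proved $\Phi'(n)$, not $\Phi(n)$. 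One cannot weaken the claim to ``$\Phi(n)$ or $\Phi'(n)$'' either, because then Case~1's subcall might return a path anchored at the \emph{wrong} end of the subinstance (at $v_n$ rather than $v_j$), of length only $\tfrac12\log n - 1$, which cannot be extended; and one cannot strengthen to ``$\Phi(n)$ and $\Phi'(n)$'' because the case split shows that exactly one of the two is re-established at each level. This is precisely the tension the paper's $|V(L)|+|V(R)|\ge\log n$ invariant is designed to dissolve: by bookkeeping the \emph{sum} over both anchors, the recursion never has to commit to which end grows, and the gain from the maximal-edge path is credited against the sum. As an aside, the crude ``maximal chord / $\sqrt n$'' argument at the end has the same self-inconsistency — the recursion $g(n)\ge g(\sqrt n)$ degrades the constant in front of $\log n$ by a factor $2$ at each level, so it does not stabilise at any positive multiple of $\log n$ without an additional gain term. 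Also, a minor slip in your ``key structural fact'': an edge joining $\{v_1,\dots,v_{j-1}\}$ to $\{v_{j+1},\dots,v_n\}$ cannot be incident with $v_j$; the correct statement (which you clearly intend and which does follow) is that no such edge exists, so $v_j$ is a cut vertex.
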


\begin{proof}
We will prove the following stronger statement by induction on $n\ge 2$.
Let $G$ be a graph with a Hamiltonian path $v_1,\ldots,v_n$, such that $G$ (considered as an ordered graph with order $v_1,\ldots,v_n$) has no pair of crossing edges. Then $G$ admits two induced paths $L$ and $R$ which are increasing with respect to $v_1,\ldots,v_n$ and such that
\begin{itemize}
    \item $v_1$ is the starting vertex of $L$ and $v_n$ is the final vertex of $R$,
    \item $L < R$ (i.e., all vertices  $v_i\in L$ and $v_j\in R$ satisfy $i<j$), and
    \item $|V(L)| + |V(R)| \ge \log n$.
\end{itemize}

We can assume that $n\ge 3$, since otherwise the result follows by taking $L=\{v_1\}$ and  $R=\{v_n\}$.
We observe next that we can assume without loss of generality that  $v_1v_n$ is not an edge of $G$, since otherwise we can apply the result to $G-\{v_1v_n\}$ instead (an increasing induced path in $G-\{v_1v_n\}$ that contains $v_1$ and avoids $v_n$, or contains $v_n$ and avoids $v_1$ is also an increasing induced path in $G$). 

Consider now the set $F$ of \emph{maximal} edges of $G$ (i.e., edges $v_iv_j$ with $i<j$ such that there is no pair $(k,\ell)$ distinct from $(i,j)$ such that $k\le i<j\le \ell$ and $v_kv_\ell$ is an edge of $G$). Since $v_1v_n$ is not an edge of $G$, $t:=|F|\ge 2$. By maximality of the edges of $F$, and since there are no pairs of crossing edges, the edges of $F$ form an induced path which is increasing with respect to the order $v_1,\ldots,v_n$. Moreover, $F$ starts at $v_1$ and ends at $v_n$. Note that some edge $v_iv_j\in F$ is such that $j-i+1\ge n/t$. By induction, there are two increasing induced paths $L'$ and $R'$ in $G[v_i,\ldots v_j]$ such that $v_i$ is the starting vertex of $L'$ and $v_j$ is the final vertex of $R'$,  $L'<R'$, and $|V(L')| + |V(R')| \ge \log(n/t)$. Moreover, since $G$ has no pair of crossing edges and $v_iv_j$ is an edge of $G$, there is no edge between a vertex of $v_{i+1},\ldots,v_{j-1}$ and a vertex outside of $v_i,\ldots, v_j$. 

We define $L$ as the concatenation of the subpath of $F$ consisting of the edges between $v_1$ and $v_i$, together with the path $L'$; and $R$ as the concatenation of the subpath of $F$ consisting of the edges between $v_j$ and $v_n$, together with the path $R'$.

Note that $L$ and $R$ are both increasing induced paths, such that $v_1$ is the starting vertex of $L$ and $v_n$ is the final vertex of $R$, and $L < R$. Furthermore,
$$ |V(L)| + |V(R)| \ge (t + 1) - 2 + \log(n/t) = t - 1 + \log n - \log t \ge \log n,
$$
since $t\ge 2$. This concludes the proof of the stronger statement.

\smallskip

It follows that $L$ or $R$ is an induced path of size at least $ \frac12 \log n$ in $G$, as desired.
\end{proof}

The logarithmic bound in Lemma~\ref{lem:1cross} is optimal up to a constant multiplicative factor because of the following result.
\begin{theorem}[\cite{arocha2000long}]\label{th:arocha}
There is a constant $c$ such that for arbitrarily large $n$ there is an $n$-vertex outerplanar graph whose  outerface is a Hamiltonian cycle and which does not have an induced path of order more than $c\log n$.
\end{theorem}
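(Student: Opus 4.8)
The plan is to describe an explicit recursive construction of outerplanar graphs with a Hamiltonian outerface and logarithmic longest induced path, mimicking the structure of the graphs $G_i$ from Example 2 but keeping everything planar and embeddable with all vertices on the outer face. I would define a family $H_k$ inductively: $H_0$ is a single edge (two vertices on the outer face). To build $H_k$, take two disjoint copies $H_k'$ and $H_k''$ of $H_{k-1}$, place them side by side along the outer boundary, and add one new apex vertex $a$ together with a bounded number of chords joining $a$ to the endpoints of the two copies, in such a way that (i) the union of the two copies' boundary paths together with the edges at $a$ forms a Hamiltonian cycle bounding the outer face, and (ii) the graph remains outerplanar. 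Concretely, one can route the boundary so that $a$ sits between the two copies and is joined to the first and last vertex of each copy; the chords from $a$ do not cross because $a$ is adjacent only to vertices that appear consecutively (at the four "ends" of the two subconfigurations). A clean way to guarantee outerplanarity throughout is to present $H_k$ together with its fixed outer Hamiltonian cycle and check that every added chord has its two endpoints in the cyclic order without enclosing any vertex of the other copy.

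The key structural claim, proved by induction on $k$, is that any induced path in $H_k$ has order at most $c\log|V(H_k)|$ for an absolute constant $c$. As in Example 2, I would strengthen the statement to control \emph{extremal} induced paths — those with an endpoint among the $O(1)$ "special" vertices introduced at the top level of the construction (the apex $a$ and the endpoints of the two copies). The induction step says: an extremal induced path in $H_k$ either stays inside one copy of $H_{k-1}$ (and is then extremal there), or crosses from one copy to the other through the constantly many special vertices, hence is the concatenation of two extremal induced paths of $H_{k-1}$ plus $O(1)$ vertices; and an arbitrary induced path in $H_k$ is either contained in a copy of $H_{k-1}$ or decomposes into two extremal induced paths, one per copy, plus $O(1)$ vertices. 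Since $|V(H_k)|$ roughly doubles at each step, unrolling the recursion gives longest induced path $O(k) = O(\log|V(H_k)|)$, which yields the theorem with an appropriate constant $c$ (and, by padding with extra vertices on the outer face if needed, "arbitrarily large $n$").

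The main obstacle — and the part that needs genuine care rather than a one-line argument — is maintaining outerplanarity with a Hamiltonian outer face while simultaneously forbidding long induced paths: these two goals pull in opposite directions, since planarity severely limits how many "shortcut" chords one can add, and too few chords would allow a long induced path to snake through the construction. The resolution is that in an outerplanar graph the chords form a non-crossing (laminar) family, which is exactly the structural feature that makes the inductive bound on induced paths go through: any induced path is cut into pieces by the outermost chords it respects, and there are only a bounded number of such "top-level" pieces at each recursion level. I would therefore need to verify the decomposition lemma for induced paths in an arbitrary outerplanar graph with Hamiltonian outer cycle (every chord splits the vertex set into two intervals, and an induced path interacts with the outermost chords in a controlled way), and then apply it to $H_k$. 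The bookkeeping of which $O(1)$ vertices count as "special" at each level, and checking that the induced-path bound's constant does not blow up over the $\Theta(\log n)$ levels, is routine but must be done honestly.
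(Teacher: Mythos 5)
This is a theorem the paper cites from \cite{arocha2000long} rather than reproving, so there is no in-paper proof to compare against; however, the paper's own Example~2 (the graphs $G_i$) already gives such a construction, and it is worth comparing your argument to the one sketched there.

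There is a genuine gap in your inductive argument. You claim that an extremal induced path in $H_k$ that crosses between the two copies ``is the concatenation of two extremal induced paths of $H_{k-1}$ plus $O(1)$ vertices.'' If $e_k$ denotes the maximum order of an extremal induced path in $H_k$, this gives the recurrence $e_k \le 2e_{k-1}+O(1)$, and since $|V(H_k)|$ roughly doubles at each step, unrolling yields $e_k=\Theta(2^k)=\Theta(|V(H_k)|)$, i.e.\ a \emph{linear} bound, not the logarithmic one you want. The whole point of the ``extremal'' strengthening (as used for the $G_i$ in Example~2 of the paper) is that an extremal path \emph{cannot} visit both copies: the chord from the extremal endpoint to the junction vertices forces it to stay on one side, so the correct recurrence for extremal paths is $e_k\le e_{k-1}+O(1)$, and only the \emph{general} path bound takes the form $p_k\le\max(p_{k-1},\,2e_{k-1}+O(1))$. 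This is precisely why both $e_k$ and $p_k$ come out as $O(k)$. Your proposal never establishes that extremal paths are confined to one copy, and as written the recurrence you state proves nothing useful.

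Two further remarks. First, the resolution you invoke --- that chords in an outerplanar graph form a non-crossing (laminar) family --- is not by itself enough to bound induced paths: a fan $K_1 + P_{n-1}$ has a laminar chord family and a Hamiltonian outer face, yet contains an induced path of order $n-1$. What matters is that the \emph{nesting depth} of the chord family is $O(\log n)$, and your construction needs to be checked to guarantee this; your replacement of Example~2's three top-level vertices $u,v,w$ (with chords $uv,uw$) by a single apex $a$ is a nontrivial change whose effect on the path-bounding argument you would need to verify rather than assert. Second, the graphs $G_i$ of Example~2 already are outerplanar with a Hamiltonian outer cycle (the Hamiltonian path plus the edge $uw$), and the paper sketches the correct $O(\log n)$ induced-path bound for them, so the cleanest way to obtain \Cref{th:arocha} with the paper's tools is simply to observe this, rather than to redesign the construction.
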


In a graph $G$ as given by Theorem~\ref{th:arocha}, we can choose a Hamiltonian path $P$ along the outerface and consider the ordered graph $(G,P)$. Because $G$ is outerplanar and $P$ follows the outerface, $(G,P)$ avoids the pattern \includegraphics[scale=1]{patx.pdf} (as well as any pattern that contains it) so we have the following consequence.
\begin{corollary}\label{cor:1crossUB}
If $M=$ \includegraphics[scale=1]{patx.pdf}, then $g_M(n) = \Theta(\log n)$.
\end{corollary}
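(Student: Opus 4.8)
The plan is to obtain $g_M(n)=\Theta(\log n)$ by matching two bounds that are already in hand. The lower bound $g_M(n)=\Omega(\log n)$ requires no new work: Lemma~\ref{lem:1cross} gives $g_M(n)\ge \tfrac12\log n$ directly. So the only thing to argue is the upper bound $g_M(n)=O(\log n)$, and for this I would package the construction sketched in the paragraph preceding the statement.

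Concretely, I would start from an $n$-vertex outerplanar graph $G$ as furnished by Theorem~\ref{th:arocha}: it has a fixed outerplanar embedding in which the outer face is a Hamiltonian cycle $C=v_1v_2\cdots v_nv_1$, and $G$ has no induced path of order more than $c\log n$. Deleting the edge $v_nv_1$ from $C$ yields a Hamiltonian path $P=v_1,v_2,\ldots,v_n$ of $G$, and I view $(G,P)$ as an ordered graph with order $v_1<\cdots<v_n$. The key step is to verify that $(G,P)$ avoids the pattern $M$, i.e.\ that $G-E(P)$ has no two crossing edges with respect to this order. Since the vertices of $G$ appear on the outer face in the cyclic order $v_1,\ldots,v_n$, we may redraw $G$ with the vertices placed on a circle in this order and all edges drawn as chords; planarity of the embedding forces that no two chords cross, which is exactly the non-crossing condition we need. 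The only point deserving a sentence of care is the chord $v_1v_n$, which survives in $G-E(P)$: for any other edge $v_av_b$ with $a<b$ we have $1\le a<b\le n$, so $v_1v_n$ and $v_av_b$ are nested rather than crossing and do not span a copy of $M$. Hence $(G,P)$ avoids $M$.

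Putting this together, $G$ is a graph on $n$ vertices with a Hamiltonian path $P$ such that $(G,P)$ avoids $M$ and $G$ has no induced path of order exceeding $c\log n$; therefore $g_M(n)\le c\log n$, which with Lemma~\ref{lem:1cross} yields $g_M(n)=\Theta(\log n)$. I do not anticipate any real obstacle: the argument is a short wrapper around Theorem~\ref{th:arocha} and Lemma~\ref{lem:1cross}, and the only bookkeeping is the handling of the deleted cycle edge $v_1v_n$ noted above, together with the routine observation that avoiding $M$ automatically entails avoiding any pattern that contains $M$, so the same construction serves as an $O(\log n)$ upper-bound witness for all such larger patterns as well.
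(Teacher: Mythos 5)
Your proof is correct and follows the paper's approach exactly: the lower bound comes from Lemma~\ref{lem:1cross}, and the upper bound comes from the outerplanar construction of Theorem~\ref{th:arocha} with the Hamiltonian path taken along the outer face, so that $(G,P)$ avoids $M$. Your explicit check that the residual edge $v_1v_n$ nests with (rather than crosses) every other edge in the linear order is a detail the paper glosses over but is a correct and welcome clarification.
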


We now extend Lemma~\ref{lem:1cross} to all ordered matchings.

\begin{theorem}\label{thm:matchingplog}
For any  ordered matching $H$ with $|E(H)|\ge 2$, $g_H(n)=\Omega((\log n)^{1/d})$,
where $d=|E(H)|-1$.
\end{theorem}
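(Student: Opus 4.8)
The plan is to prove Theorem~\ref{thm:matchingplog} by induction on $|E(H)|$, with Lemma~\ref{lem:1cross} serving as the base case $|E(H)|=2$ (where $d=1$ and the bound is $\Omega(\log n)$). By Corollary~\ref{cor:deg0b}, it suffices to handle the case where $H$ is an ordered \emph{perfect} matching: any isolated vertices of $H$ can be added back at the cost of replacing the exponent $c=1/d$ by $c/(1+c)$, but since we only claim a lower bound of the form $\Omega((\log n)^{1/d})$ with $d=|E(H)|-1$ and the perfect-matching version gives a bound with a smaller $d$ (fewer edges), the loss is absorbed — actually I would set things up so that $d=|E(H)|-1$ is computed on the perfect matching obtained after deleting isolated vertices, and note that the polylog exponent is unaffected by the second bullet of Corollary~\ref{cor:deg0b}. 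So from now on assume $H$ is an ordered perfect matching with $k:=|E(H)|\ge 3$ edges.

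The core idea is a dichotomy on a single \emph{outermost} edge of $H$. Pick the edge $e=u_au_b$ of $H$ that is ``maximal'' in the sense of spanning the widest interval (or: contains the first vertex $u_1$; either works, but the leftmost-and-outermost choice is cleanest). Deleting the two endpoints of $e$ from $H$ leaves an ordered perfect matching $H'$ with $k-1$ edges, but $H'$ naturally splits according to which vertices lie inside the interval $(a,b)$ versus outside it; call these $H_{\mathrm{in}}$ and $H_{\mathrm{out}}$. Given $(G,P)$ with $P=v_1,\dots,v_n$ avoiding $H$, I would first apply Lemma~\ref{lem:gap}: either $G$ already has an induced path of order $t:=\tfrac12(\log n)^{1/d}$ (and we are done), or $(G,P)$ contains $H''$ as a pattern with large gap for a suitable smaller matching $H''$ — more precisely, I would use the gap lemma to pass to a setting where between any two consecutive vertices of the pattern we have many ``spare'' vertices of $P$, which is what lets a crossing edge in $G$ realize an edge of $H$. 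The real engine, though, is this: look at the longest edge $v_iv_j$ of $G$. If $j-i$ is small (at most $n^{1-1/d}$ or so — I'd tune the threshold), Observation~\ref{obs:order} gives a long induced path directly. Otherwise restrict to $G'=G[v_{i+1},\dots,v_{j-1}]$, which, together with the edge $v_iv_j$ playing the role of $e$, must avoid the pattern $H'$ (a matching with one fewer edge); by induction $G'$ has an induced path of order $\Omega((\log(n^{1-1/d}))^{1/(d-1)})=\Omega((\log n)^{1/(d-1)\cdot(1-1/d)}\cdot\text{const})$, and here $(1-1/d)/(d-1)=1/d$, giving exactly the bound we want.

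Let me restate the inductive step more carefully so the exponent bookkeeping is transparent. Suppose the theorem holds for all ordered matchings with at most $k-1$ edges; let $H$ have $k$ edges, so $d=k-1$, and we want $g_H(n)=\Omega((\log n)^{1/d})$. Fix $G$ with Hamiltonian path $v_1,\dots,v_n$ avoiding $H$. Take the edge $v_iv_j$ of $G$ with $j-i$ maximum. If $j-i\le n^{1-1/d}$, then every edge of $G$ joins vertices at distance $\le n^{1-1/d}$, so Observation~\ref{obs:order} yields an induced path of order $\ge n/n^{1-1/d}=n^{1/d}\ge(\log n)^{1/d}$ for large $n$, done. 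Otherwise set $m:=j-i-1>n^{1-1/d}$ and consider $G':=G[v_{i+1},\dots,v_{j-1}]$ with its inherited Hamiltonian subpath. I claim $(G',P')$ avoids the ordered matching $H^\ast:=H-\{u_a,u_b\}$ where $u_au_b$ is the leftmost outermost edge of $H$: indeed a copy of $H^\ast$ in $G'$ lies strictly between $v_i$ and $v_j$, and since $v_iv_j\in E(G)$ and $v_i$ precedes everything in $G'$ while $v_j$ follows everything, adding $v_i,v_j$ as the endpoints of $u_a,u_b$ extends it to a copy of $H$ in $(G,P)$ — here one must check the order type is respected, which holds precisely because $u_au_b$ was chosen outermost so all of $H^\ast$'s vertices sit in positions between those of $u_a$ and $u_b$. (If the outermost edge of $H$ is not also leftmost, $H^\ast$ decomposes as a concatenation of the part before and the part inside and the part after, and one invokes Lemma~\ref{lem:concat} to split; I would choose the edge incident to $u_1$ to avoid this, making $H^\ast$ literally the ``inside'' matching $H_{\mathrm{in}}$ followed by the ``outside'' matching $H_{\mathrm{out}}$, and then Lemma~\ref{lem:concat} reduces to two smaller matchings — this is where a little care is needed.) Now $H^\ast$ has $k-1$ edges, so by induction $g_{H^\ast}(m)=\Omega((\log m)^{1/(d-1)})$, hence $G'$ (and thus $G$) has an induced path of order $\Omega((\log(n^{1-1/d}))^{1/(d-1)})=\Omega(((1-\tfrac1d)\log n)^{1/(d-1)})=\Omega((\log n)^{1/(d-1)})$. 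Since $1/(d-1)\ge 1/d$, this is $\Omega((\log n)^{1/d})$ for large $n$, completing the induction.

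The step I expect to be the main obstacle is the bookkeeping around splitting $H^\ast$ into sub-matchings and keeping the exponent exactly $1/d$ rather than something worse like $1/(2^d)$. A naive recursion ``delete one edge, recurse, take a root'' loses a factor in the exponent at every level unless one is careful to recurse on $H^\ast$ with $k-1$ edges (giving exponent $1/(k-2)$) and observe that $1-1/(k-1)$ times $1/(k-2)$ equals $1/(k-1)$ — i.e. the polynomial loss from the long-edge threshold exactly compensates the drop in nesting depth. If instead the outermost edge forces a genuine concatenation $H_{\mathrm{in}}\cdot H_{\mathrm{out}}$ with both parts nonempty, Lemma~\ref{lem:concat} costs only a constant factor in $n$ and no exponent loss, so that case is actually easier. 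A secondary subtlety: to even apply the inductive hypothesis we need $H^\ast$ to have at least one edge (if $k=3$ and $H^\ast$ has two edges that is fine; the degenerate case is when $H^\ast$ is empty, i.e. $H$ itself is a single edge, which is excluded since $k\ge 3$), and we need $m$ large enough that the asymptotic bound kicks in, which is guaranteed by $m>n^{1-1/d}\to\infty$. I would also double-check that Lemma~\ref{lem:gap} is in fact not needed for the perfect-matching case — the argument above via the longest edge seems self-contained — and reserve the gap lemma for deducing the general (non-perfect) matching statement through Corollary~\ref{cor:deg0b}, which is already packaged for exactly that purpose.
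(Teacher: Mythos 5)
The core of your proposal --- delete the leftmost edge $u_1u_j$ of $H$, take a longest edge $v_iv_j$ of $G$, and recurse on $G[v_{i+1},\dots,v_{j-1}]$ --- does not work for ordered matchings with crossing edges, which is precisely the content of Theorem~\ref{thm:matchingplog}. Your argument is, almost verbatim, the paper's proof of Theorem~\ref{tm:matchingBound} (the \emph{non-crossing} case), and it only succeeds there because for a non-crossing perfect matching one can assume $u_1$ is matched to the last vertex $u_k$, so that $H^\ast=H-\{u_1,u_k\}$ sits entirely inside the span of the deleted edge. For a crossing matching this is false. Take $H$ with edges $u_1u_3,\,u_2u_5,\,u_4u_6$: the edge incident to $u_1$ is $u_1u_3$, and $H^\ast=H-\{u_1,u_3\}$ has $u_2$ inside the interval $(u_1,u_3)$ but $u_4,u_5,u_6$ outside. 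A copy of $H^\ast$ found inside $G'=G[v_{i+1},\dots,v_{j-1}]$ cannot be promoted to a copy of $H$, because $u_5,u_6$ would need to land after $v_j$, outside $G'$. Worse, the fallback you sketch via Lemma~\ref{lem:concat} also fails: $H^\ast$ has the edge $u_2u_5$ joining the ``inside'' part to the ``outside'' part, so $H^\ast$ is not a concatenation $H_{\mathrm{in}}\cdot H_{\mathrm{out}}$ of two smaller matchings. Whenever $H$ has a crossing, there is no single edge whose span contains all the other vertices, so no choice of ``outermost'' edge rescues the induction.

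Your closing hunch that ``Lemma~\ref{lem:gap} is in fact not needed for the perfect-matching case'' is exactly backwards. The paper's proof of Theorem~\ref{thm:matchingplog} leans entirely on Lemma~\ref{lem:gap}: it applies the gap lemma repeatedly (with $m=\Theta(\exp((\log n)^{(d-1)/d}))$ and $t=(\log n)^{1/d}$) to build a tower of $s=\tfrac12(\log n)^{1/d}$ nested copies $H_1^-,\dots,H_s^-$ of the pattern $H^-=H-\{u_1,u_j\}$, each $H_{i+1}^-$ placed in the gap between the images of $u_{j-1}$ and $u_{j+1}$ in $H_i^-$. The punchline is structural rather than recursive on $G$: because $(G,P)$ avoids $H$, no edge of $G$ can jump from a vertex before the first vertex of a copy $H_i^-$ directly into the $(u_{j-1},u_{j+1})$ window of that copy (such an edge, together with $H_i^-$, would realize $H$). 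Hence a shortest increasing path from $v_1$ to the innermost window must use a new vertex at essentially every level, giving an induced path of order $\Omega(s)=\Omega((\log n)^{1/d})$. This mechanism has no analogue in the longest-edge recursion, and there is no obvious way to patch your proposal into it without essentially discovering the paper's construction. One smaller point: your base case $|E(H)|=2$ should also cover the non-crossing matching on two edges (handled in the paper via Theorem~\ref{thm:ncom}), not only the crossing one from Lemma~\ref{lem:1cross}.
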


\begin{proof}
Using Corollary \ref{cor:deg0b}, it is sufficient to prove the result in the case where  $H$ is a perfect matching.

We prove the result by induction on the number of edges of $H$. Let
$G$ be a graph with a Hamiltonian path $P=v_1,\ldots,v_n$, and assume
that $(G,P)$ does not contain the pattern $H$.  If $H$
has two edges then the result follows from Theorem \ref{thm:ncom} (if the edges of $H$ are non-crossing), and Lemma \ref{lem:1cross} (if the edges of $H$ are crossing).
So we can assume that $H$
contains at least three edges. Let $u_1,u_2,\ldots,u_k$ be the order on
the vertices of $H$, and let $u_j$ ($2\le j \le k$) be the unique neighbor of
$u_1$ in $H$. We denote the edge $u_1u_j$ by $e$, and define the
ordered graph $H^- $ as the ordered subgraph of $H$ obtained by
removing $u_1$ and $u_j$. Note that $H^-$ is a non-empty ordered
perfect matching, with $|E(H^-)|=|E(H)|-1$ and thus $g_{H^-}(n)=\Omega((\log
n)^{1/(d-1)})$ by the induction hypothesis.

Let $t=(\log
n)^{1/d}$. If we find an induced path of order $t$ in $G$ or any of its
induced subgraphs then we are done, so we can assume in the remainder
of the proof that none of the induced subgraphs of $G$ contains an
induced path of order $t$.

Let $m=\Theta(\exp ((\log
n)^{(d-1)/d}))$. Then
\[
g_{H^-}(\tfrac{m-1}2)=\Omega\left ( \left ((\log
  n)^{(d-1)/d}\right )^{1/(d-1)}\right )=\Omega\left ((\log n)^{1/d}\right )\ge t,
\] for a
sufficiently large (implicit) constant in the definition of $m$.
We can thus apply Lemma~\ref{lem:gap}, and obtain a copy $H^-_1$ of the pattern $H^-$ with
gap at least $n/mt$ in $G$. Fix such a copy $H^-_1$ in $G$, and recall that $H^-$ was obtained from $H$
by removing the (adjacent) vertices $u_1$ and $u_j$. Let $v_a$ and
$v_b$ be the images in $G$ of $u_{j-1}$ and $u_{j+1}$ from the copy
$H^-_1$ of $H^-$. Since $H^-_1$ has gap at least $n/mt$ in $G$, we have $b-a\ge n/mt$. We
now look at the subpath $P_1$ of $P$ with ends $v_a$ and $v_b$,
and consider the
subgraph $G_1$ of $G$ induced by the vertices of $P_1$. This graph has $n_1\ge
n/mt$ vertices. Note that $G$ has no edge with one endpoint before
the first vertex of the copy $H^-_1$, and one endpoint
in $P_1$, since otherwise $(G,P)$ would contain the pattern $H$.

We now apply Lemma \ref{lem:gap} again to  $(G_1,P_1)$ with the
same parameters $m$ and $t$, and find a copy $H_2^-$ of the pattern
$H^-$ with gap at least $n_1/mt\ge n/(mt)^2$ in $G_1$. Applying this
procedure $s:=\tfrac12(\log n)^{1/d}$ times, we find $s$ copies $H_1^-,H_2^-,
\ldots,H^-_s$ of the pattern $H^-$ in $G$, such that for any $1\le
i\le s-1$, the copy $H_{i+1}^-$ appears between the images of the
vertices $u_{j-1}$ and $u_{j+1}$ of the copy $H_{i}^-$ in $G$. Note
that there is a constant $c$ (depending on the constants in the big $O$) such that the final copy $H_s^-$ has gap at
least 
\begin{eqnarray*}
\frac{n}{(mt)^s}& \ge&\frac{n}{\exp\left(\tfrac12(\log n)^{1/d}\cdot((\log
      n)^{(d-1)/d}+\tfrac1d \log \log n + c)\right)}\\ &=& \frac{\sqrt{n}}{\exp(\tfrac1{2d}(\log n)^{1/d}\log \log n + c)}\ge 2.
\end{eqnarray*}      
      Let $v_c$ denote a vertex appearing between the images
of  the
vertices $u_{j-1}$ and $u_{j+1}$ of the copy $H_{s}^-$ in $G$. As this
copy $H_{s}^-$  has gap at least 2 in $G$, $v_c$ exists. Consider a shortest increasing
path between $v_1$ and $v_c$ in $G$. No edge of this path can have its first
endpoint before the first vertex of a copy $H_i^-$, and the last endpoint between the images of the
vertices $u_{j-1}$ and $u_{j+1}$ of the copy $H_{i}^-$. It follows
that the shortest increasing path between $v_1$ and $v_c$ in $G$ has
size at least $s/2=\Omega((\log n)^{1/d})$. This shortest path is in
particular an induced path in
$G$ of order $\Omega((\log n)^{1/d})$, as desired.
\end{proof}

\subsection*{Applications}

We now discuss several applications of Theorems \ref{tm:matchingBound} and  \ref{thm:matchingplog}. It turns out that these theorems allow us to quickly recover a number of known results, in a unified way, with much simpler proofs.

\medskip

We start with the following simple lemma, whose proof is very similar to that of Theorem~\ref{thm:ncom}. Let $H$ be an ordered matching on the (ordered) vertex set $u_2,\ldots,u_{k-1}$. We denote by $\widehat{H}$ the ordered matching obtained from $H$ by adding two vertices $u_1$ and $u_k$, joined by an edge (so the ordered vertex set of $\widehat{H}$ is $u_1,\ldots,u_k$,  and $|E(\widehat{H})|=|E(H)|+1$). See Figure \ref{fig:Hhat}, left, for an illustration.

\begin{lemma}\label{lem:hat}
For any ordered matching $H$, $g_{\widehat{H}}(n)\ge \min\{g_H(\sqrt{n}-1),\sqrt{n}\}$.
\end{lemma}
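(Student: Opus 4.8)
The plan is to mimic the proof of Theorem~\ref{thm:ncom} (and the final step of Theorem~\ref{tm:matchingBound}): take the longest edge in the Hamiltonian path ordering, and either it is short enough that Observation~\ref{obs:order} gives a long induced path directly, or it is long and the interval strictly between its endpoints must avoid the pattern $H$, so the induction/hypothesis on $g_H$ applies. First I would let $G$ be a graph with a Hamiltonian path $P=v_1,\ldots,v_n$ such that $(G,P)$ avoids $\widehat{H}$, and let $v_iv_j$ (with $i<j$) be an edge of $G$ maximizing $j-i$. If $j-i\le \sqrt n$, then every edge $v_kv_\ell$ of $G$ satisfies $|k-\ell|\le \sqrt n$, so Observation~\ref{obs:order} yields an induced path of order at least $n/\sqrt n=\sqrt n$, and we are done.

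Otherwise $j-i>\sqrt n$. Consider $G'=G[\{v_{i+1},\ldots,v_{j-1}\}]$ with the induced ordering, which has more than $\sqrt n-1$ vertices. I claim $(G',P')$ avoids the pattern $H$, where $P'$ is the restriction of $P$: if $H$ appeared as a pattern in $G'$ on vertices all strictly between $v_i$ and $v_j$, then together with the edge $v_iv_j$ (which lies in $G-E(P)$ since $j-i>\sqrt n\ge 2$, so it is not a path edge) we would get a copy of $\widehat H$ as a pattern in $(G,P)$ — the two new endpoints $u_1,u_k$ of $\widehat H$ map to $v_i,v_j$, which respectively precede and follow all vertices of the copy of $H$. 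This contradicts the choice of $G$. Hence $G'$ — and therefore $G$ — contains an induced path of order at least $g_H(\sqrt n-1)$ (technically $g_H(|V(G')|)\ge g_H(\lceil\sqrt n\rceil-1)\ge g_H(\sqrt n -1)$, using that $g_H$ is nondecreasing, or simply bounding $|V(G')|\ge \sqrt n -1$). Combining the two cases gives $g_{\widehat H}(n)\ge\min\{g_H(\sqrt n-1),\sqrt n\}$.

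The only genuinely delicate point — and it is minor — is making sure the edge $v_iv_j$ is legitimately available to extend $H$ to $\widehat H$ as a pattern, i.e.\ that it is not an edge of the path $P$; this is guaranteed because its length $j-i$ exceeds $\sqrt n\ge 1$. One should also double-check the trivial small-$n$ cases and the monotonicity of $g_H$ to pass from $|V(G')|$ to $\sqrt n-1$ cleanly, but there is no real obstacle here: the argument is a direct and short adaptation of the longest-edge dichotomy already used twice in this section.
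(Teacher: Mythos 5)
Your proof is correct and follows essentially the same longest-edge dichotomy as the paper's own proof: split on whether the maximal-stretch edge $v_iv_j$ has $j-i\le\sqrt n$ (Observation~\ref{obs:order}) or $j-i>\sqrt n$ (the interval strictly between $v_i$ and $v_j$ avoids $H$, and the edge $v_iv_j$ completes any copy of $H$ there to a copy of $\widehat H$). The only difference is that you spell out two sanity checks (that $v_iv_j\notin E(P)$ and that $g_H$ is nondecreasing) which the paper leaves implicit; these are fine.
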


\begin{proof}
Let $G$ be a graph with a Hamiltonian path $P=v_1,\ldots,v_n$. Let $v_iv_j$ an edge of $G$ with  $j - i$ maximal. If $j - i \le \sqrt{n}$ then by \Cref{obs:order} $G$ contains an induced path of order at least $\sqrt{n}$. Otherwise $j-i-1 \ge  \sqrt{n}-1$, and $v_{i+1} \dots v_{j-1}$ avoids $H$. Hence $P$ contains an induced path of order $g_H(\sqrt{n}-1)$.
\end{proof}

We immediately deduce the following, which was proved in \cite{esperet2017long}, with a significantly more complicated proof. We note that by using an asymmetric version of Lemma \ref{lem:hat}, the multiplicative constant $\tfrac14$ we obtain here can easily be optimized to match the multiplicative constant $\tfrac12-o(1)$ obtained in \cite{esperet2017long}.

\begin{theorem}\label{thm:k4m}
Any $K_{4}$-minor-free graph (and in particular any outerplanar graph) with a path of order $n$ contains an induced path of order $\tfrac14\log n$.
\end{theorem}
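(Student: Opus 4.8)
The plan is to deduce Theorem~\ref{thm:k4m} from Theorem~\ref{thm:matchingplog} (or rather its non-crossing counterpart, Theorem~\ref{tm:matchingBound}) by exhibiting an ordered matching pattern that is automatically avoided by every $K_4$-minor-free graph when the Hamiltonian path is chosen appropriately. First I would recall that $K_4$-minor-free graphs are exactly the graphs of treewidth at most $2$, equivalently the subgraphs of series-parallel graphs, and that such a graph on a Hamiltonian path $P=v_1,\ldots,v_n$ cannot contain, in $G-E(P)$, a pair of crossing edges $v_av_c,v_bv_d$ with $a<b<c<d$ together with the path edges between them: such a configuration, contracted appropriately along $P$, yields a $K_4$ minor (the four ``branch'' arcs $v_a\!\ldots\!v_b$, $v_b\!\ldots\!v_c$, $v_c\!\ldots\!v_d$ plus the two chords realize $K_4$). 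Hence $(G,P)$ avoids the crossing pattern $M=\includegraphics[scale=1]{patx.pdf}$, and more to the point it avoids the non-crossing pattern $\widehat{M}$ where $M$ itself is a single edge: that is, $(G,P)$ avoids two \emph{nested} non-path edges (the pattern $a<b<c<d$ with edges $ad$ and $bc$), because two nested chords over a Hamiltonian path again force a $K_4$ minor.

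Concretely, let $H$ be the ordered matching on a single edge, so $\widehat H$ is the nested two-edge matching ($u_1<u_2<u_3<u_4$, edges $u_1u_4$ and $u_2u_3$), which is non-crossing of depth $2$. The key observation is that every $K_4$-minor-free graph with Hamiltonian path $P$ avoids $\widehat H$ as a pattern: if the chords $v_iv_\ell$ and $v_jv_k$ with $i<j<k<\ell$ were both present and neither is a $P$-edge, then contracting $P[v_i,v_j]$, $P[v_j,v_k]$, $P[v_k,v_\ell]$ to three vertices and keeping the two chords produces a $K_4$ minor, a contradiction. Now apply Lemma~\ref{lem:hat} with this $H$: since $g_H(n)=n$, we get $g_{\widehat H}(n)\ge\min\{g_H(\sqrt n-1),\sqrt n\}=\min\{\sqrt n-1,\sqrt n\}$. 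But this only gives $\Omega(\sqrt n)$, not $\tfrac14\log n$, so in fact the right pattern to forbid is the one whose exclusion is \emph{equivalent} to outerplanarity/$K_4$-minor-freeness, namely the crossing pattern $M$, and one should invoke Lemma~\ref{lem:1cross} rather than the $\widehat{\cdot}$ machinery.

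So the cleaner route, which I would actually write, is: take the Hamiltonian path $P$ guaranteed by the hypothesis (in an outerplanar graph, along the outer face; in a general $K_4$-minor-free graph one first restricts to the subgraph induced by the path of order $n$, which is still $K_4$-minor-free and has a Hamiltonian path). Since $G$ is $K_4$-minor-free, $(G,P)$ has no pair of crossing non-path edges — otherwise a crossing pair of chords over $P$ yields a $K_4$ minor as above — hence $G$, viewed as an ordered graph via $P$, has no pair of crossing edges at all once we note the path edges cannot cross anything in a problematic way; more precisely $(G,P)$ avoids the pattern $M=\includegraphics[scale=1]{patx.pdf}$. By Lemma~\ref{lem:1cross}, $g_M(n)\ge\tfrac12\log n$, so $G$ contains an induced path of order $\tfrac12\log n\ge\tfrac14\log n$. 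The main thing to get right — and the only real obstacle — is the verification that a crossing pair of chords over a Hamiltonian path genuinely forces a $K_4$ minor (equivalently, that $(G,P)$ avoiding $M$ follows from $K_4$-minor-freeness); this is a short, standard argument about contracting the three path-segments delimited by the four endpoints, but it must be stated carefully, in particular handling the degenerate cases where segments have length zero or a chord coincides with a path edge, which is why one works with chords strictly interior to each other's span. The bound $\tfrac14$ is then just slack, and the remark about optimizing to $\tfrac12-o(1)$ via an asymmetric variant of Lemma~\ref{lem:hat} can be left as stated.
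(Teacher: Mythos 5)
Your ``cleaner route'' has a genuine gap at exactly the step you flag as the only real obstacle: a crossing pair of non-path chords over a Hamiltonian path does \emph{not} force a $K_4$ minor, so $K_4$-minor-freeness does not imply that $(G,P)$ avoids the pattern $M$. Concretely, take $P=v_1v_2v_3v_4v_5$ and add the chords $v_1v_3$ and $v_2v_5$. These cross ($1<2<3<5$, edges $\{1,3\}$ and $\{2,5\}$), neither is a path edge, and both are strictly interior in your sense, yet the resulting graph has treewidth $2$ and no $K_4$ minor (check: every single-edge contraction leaves $K_4$ minus an edge or less). Your suggested verification by ``contracting the three path-segments delimited by the four endpoints'' produces only three branch sets and at best a triangle, so it cannot yield $K_4$. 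This is precisely why the paper applies the $\widehat{\,\cdot\,}$ operator to $M$ rather than invoking Lemma~\ref{lem:1cross} directly: $\widehat{M}$ adds an outer edge $u_1u_6$, which together with the path closes a cycle, and a cycle carrying the two crossing chords $bd$ and $ce$ contains a $K_4$ subdivision (the four chord-endpoints, the four cycle arcs between them, and the two chords). The paper's proof then reads $g_M(n)\ge\tfrac12\log n$ (Lemma~\ref{lem:1cross}), hence $g_{\widehat M}(n)\ge\tfrac14\log n$ (Lemma~\ref{lem:hat}), and the pattern $\widehat{M}$ is avoided because it \emph{does} force a $K_4$ minor.

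Two smaller remarks. First, your opening paragraph also contains a false claim: the nested pattern $u_1u_4,u_2u_3$ does not force a $K_4$ minor either (the path plus both chords is a theta graph, which is series-parallel), so that approach would not apply to $K_4$-minor-free graphs at all; your stated reason for abandoning it --- that it ``only'' gives $\Omega(\sqrt n)$ --- is backwards, since $\sqrt n$ would be a far stronger bound than $\tfrac14\log n$ were the premise true. Second, the hypothesis hands you an arbitrary path $P$ of order $n$; you cannot replace it by a path along the outer face, so the argument must work for any Hamiltonian path of $G[V(P)]$, which is exactly what the $\widehat M$ pattern accommodates and the plain $M$ does not.
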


\begin{proof}
Recall that by Lemma \ref{lem:1cross}, if $M=$ \includegraphics[scale=1]{patx.pdf}, then $g_M(n)\ge \tfrac12 \log n$. By Lemma \ref{lem:hat}, it follows that $g_{\widehat{M}}(n)\ge \tfrac14 \log n$. It remains to observe that if $G$ contains a path $P$ of order $n$ in such a way that the pair $(G[V(P)],P)$ contains the pattern $\widehat{M}$, then $G$ contains $K_4$ as a minor.
\end{proof}

Similarly, we obtain the following result, which was proved in \cite{esperet2017long} in the specific case of planar graphs, with a significantly more complicated proof.

\begin{theorem}\label{thm:planar}
Any $K_{3,3}$-minor-free graph (and in particular any planar graph) with a path of order $n$ contains an induced path of order $\Omega(\sqrt{\log n})$.
\end{theorem}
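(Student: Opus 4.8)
The plan is to reduce this to Theorem~\ref{thm:matchingplog} by exhibiting a short ordered matching $M'$ whose appearance as a pattern in $(G[V(P)],P)$ forces a $K_{3,3}$ minor, and then controlling the loss in the exponent incurred by the reduction. Recall that $K_{3,3}$ has a natural drawing on the torus, but more usefully for us, if along a Hamiltonian path $v_1,\dots,v_n$ of $G$ we see the right kind of interleaved pattern of non-path edges, we can contract subpaths of $P$ to branch sets and realize $K_{3,3}$ as a minor. Concretely, I would first identify an ordered matching $M'$ on a constant number of vertices (the natural candidate is the ``double crossing'' matching: vertices $a_1<a_2<a_3<b_1<b_2<b_3$ with edges $a_1b_2$, $a_2b_3$ together with an enveloping edge $a_1b_3$ as in $\widehat{\cdot}$, or a similar configuration on a few vertices whose two nested crossings yield three independent paths each joining two fixed branch sets). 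The key combinatorial claim to verify is: if $(G[V(P)],P)$ contains $M'$ as a pattern, then $G$ has $K_{3,3}$ as a minor. This is where most of the genuine work lies, and I expect it to be the main obstacle — one has to be careful to produce \emph{three} disjoint branch sets on each side, which is why a single pair of crossing edges (which only gives $K_4$, as in Theorem~\ref{thm:k4m}) is not enough, and one needs a configuration with at least two crossings in a suitably nested or stacked arrangement.

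Granting that claim, the contrapositive is: every $K_{3,3}$-minor-free graph $G$ with a Hamiltonian path $P$ avoids the pattern $M'$, hence has an induced path of order at least $g_{M'}(n)$. Since $M'$ is an ordered matching (after deleting any degree-$0$ vertices, an ordered perfect matching) with a bounded number $d=|E(M')|-1 = O(1)$ of edges, containing at least one pair of crossing edges, Theorem~\ref{thm:matchingplog} gives $g_{M'}(n) = \Omega((\log n)^{1/d})$. If the right configuration $M'$ can be taken with exactly $|E(M')|=3$ edges (two crossing edges plus one enveloping edge, so $d=2$), this already yields $g_{M'}(n)=\Omega(\sqrt{\log n})$, matching the claimed bound exactly. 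If instead $M'$ naturally needs more edges, one applies Lemma~\ref{lem:hat} and/or Corollary~\ref{cor:deg0b} to strip off or fold in the auxiliary vertices: Lemma~\ref{lem:hat} costs only a $\sqrt{\cdot}$ inside the argument (which preserves $(\log n)^{1/d}$ up to a constant), and Corollary~\ref{cor:deg0b} likewise preserves the polylogarithmic exponent when passing between a matching and its associated perfect matching. So the exponent is dictated entirely by how few edges suffice in the forced pattern.

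To make the minor argument concrete I would proceed as follows. Since $G$ is $2$-connected (we may assume so, as the Hamiltonian path lets us reduce to an induced subgraph, and components/cut-vertex pieces can be handled separately), and it is $K_{3,3}$-minor-free, by Wagner's characterization $G$ is planar or $G=K_5$; the $K_5$ case is finite and harmless, so assume $G$ is planar with Hamiltonian path $P$. Then think of the vertices $v_1<\dots<v_n$ on a line with all non-path edges drawn as arcs; planarity (together with the Hamiltonian path acting as a spine) forces the non-path edges on each side of the spine to be non-crossing, i.e.\ the ``page number'' is at most $2$. This is the structural fact to exploit: a $2$-page book embedding means the non-path edges split into two laminar (non-crossing) families. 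Hence $(G,P)$ cannot contain \emph{any} ordered matching pattern whose edges require $3$ pairwise-crossing edges, and more relevantly, cannot contain the specific $M'$ I choose, provided $M'$ is built so that its edges cannot be $2$-colored without a monochromatic crossing — the simplest such object being a set of three pairwise crossing edges, $a_1<a_2<a_3<a_4<a_5<a_6$ with edges $a_1a_4$, $a_2a_5$, $a_3a_6$. So I would take $M'$ to be exactly this triple-crossing matching (adding no enveloping edge), giving $d=|E(M')|-1=2$ and $g_{M'}(n)=\Omega((\log n)^{1/2})$ from Theorem~\ref{thm:matchingplog}. It then only remains to note that three pairwise crossing non-path arcs in $(G,P)$ force a $K_{3,3}$-subdivision: the three arcs together with the three spine segments they span give the required bipartite topological structure (the arc endpoints and one interior vertex of each spine segment serve as the six branch vertices), so any graph with $P$ a Hamiltonian path and containing $M'$ as a pattern is non-planar, hence (being $2$-connected and not $K_5$ once $n\ge 6$) has a $K_{3,3}$ minor. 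This finishes the proof; the one delicate point to get right is the branch-set bookkeeping in that last topological step, which is the part I would write out most carefully.
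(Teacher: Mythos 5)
Your high-level plan is the right one and matches the paper's: exhibit a constant-size ordered matching whose appearance as a pattern forces a $K_{3,3}$ minor, then invoke Theorem~\ref{thm:matchingplog}, possibly with Lemma~\ref{lem:hat} and Corollary~\ref{cor:deg0b} to control the exponent. But the specific pattern you settle on does not work, and the structural fact you lean on to justify it is false.

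You take $M'$ to be the triple-crossing matching on $a_1<\dots<a_6$ with edges $a_1a_4$, $a_2a_5$, $a_3a_6$ (no enveloping edge), and you claim that three pairwise crossing non-path arcs already force a $K_{3,3}$-subdivision, equivalently that containing $M'$ as a pattern implies non-planarity. This is wrong: the graph obtained from the path $v_1,\ldots,v_8$ by adding the three pairwise-crossing edges $v_2v_5$, $v_3v_6$, $v_4v_7$ is planar. (Draw the $4$-cycle $v_2v_3v_4v_5$ as a square, place $v_6$ inside it and draw $v_6v_3$, $v_6v_5$, then put $v_7,v_8$ in the inner region bounded by these arcs and the side $v_4v_5,v_4v_3$ and draw $v_7v_4$, $v_7v_6$, $v_7v_8$; attach $v_1$ outside.) Extending the path arbitrarily keeps the graph planar, so $(G,P)$ can contain $M'$ as a pattern with gap as large as you like while $G$ remains $K_{3,3}$-minor-free. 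The auxiliary claim you use to justify this step --- that a planar graph with a Hamiltonian path $P$ admits a $2$-page book embedding with $P$ as spine, so the non-path edges split into two laminar families --- is also false, and the same example refutes it: $v_2v_5, v_3v_6, v_4v_7$ are pairwise crossing, so no $2$-coloring avoids a monochromatic crossing, yet the graph is planar. Book thickness at most $2$ requires $G$ to be a subgraph of a planar \emph{Hamiltonian} graph with a compatible cyclic order, which the given path need not provide.

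The missing ingredient is exactly the enveloping edge: one should use $\widehat{M'}$, i.e.\ add two new extreme vertices $u_1<a_1<\dots<a_6<u_8$ joined by an edge $u_1u_8$. Then contracting the subpaths of $P$ between consecutive pattern vertices gives branch sets in which $\{a_1,a_3,a_5\}$ (together with the two extremes) and $\{a_2,a_4,a_6\}$ realize $K_{3,3}$: the enveloping edge supplies the one adjacency (first block to last block) that the path edges and the three crossings alone cannot provide --- which is precisely why $M'$ by itself fails. Since $\widehat{M'}$ has $4$ edges, applying Theorem~\ref{thm:matchingplog} directly to it would only give $(\log n)^{1/3}$; instead one applies Theorem~\ref{thm:matchingplog} to $M'$ (giving $(\log n)^{1/2}$) and then Lemma~\ref{lem:hat} to pass to $\widehat{M'}$ while keeping the exponent $1/2$. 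This is the paper's argument, and the Wagner's-theorem detour in your proposal is not needed.
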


\begin{proof}
Consider the ordered matching $H$ consisting of 6 vertices $u_2,\dots,u_7$, and edges $u_2u_5$, $u_3u_6$, and $u_4u_7$ (see Figure \ref{fig:Hhat}, left, for an illustration of $H$ and $\widehat{H}$). By Theorem \ref{thm:matchingplog}, $g_H(n)=\Omega(\sqrt{\log n})$, and by Lemma \ref{lem:hat}, $g_{\widehat{H}}(n)=\Omega(\sqrt{\log n})$. 

\begin{figure}[htb]
  \centering
    \includegraphics[scale=1.2]{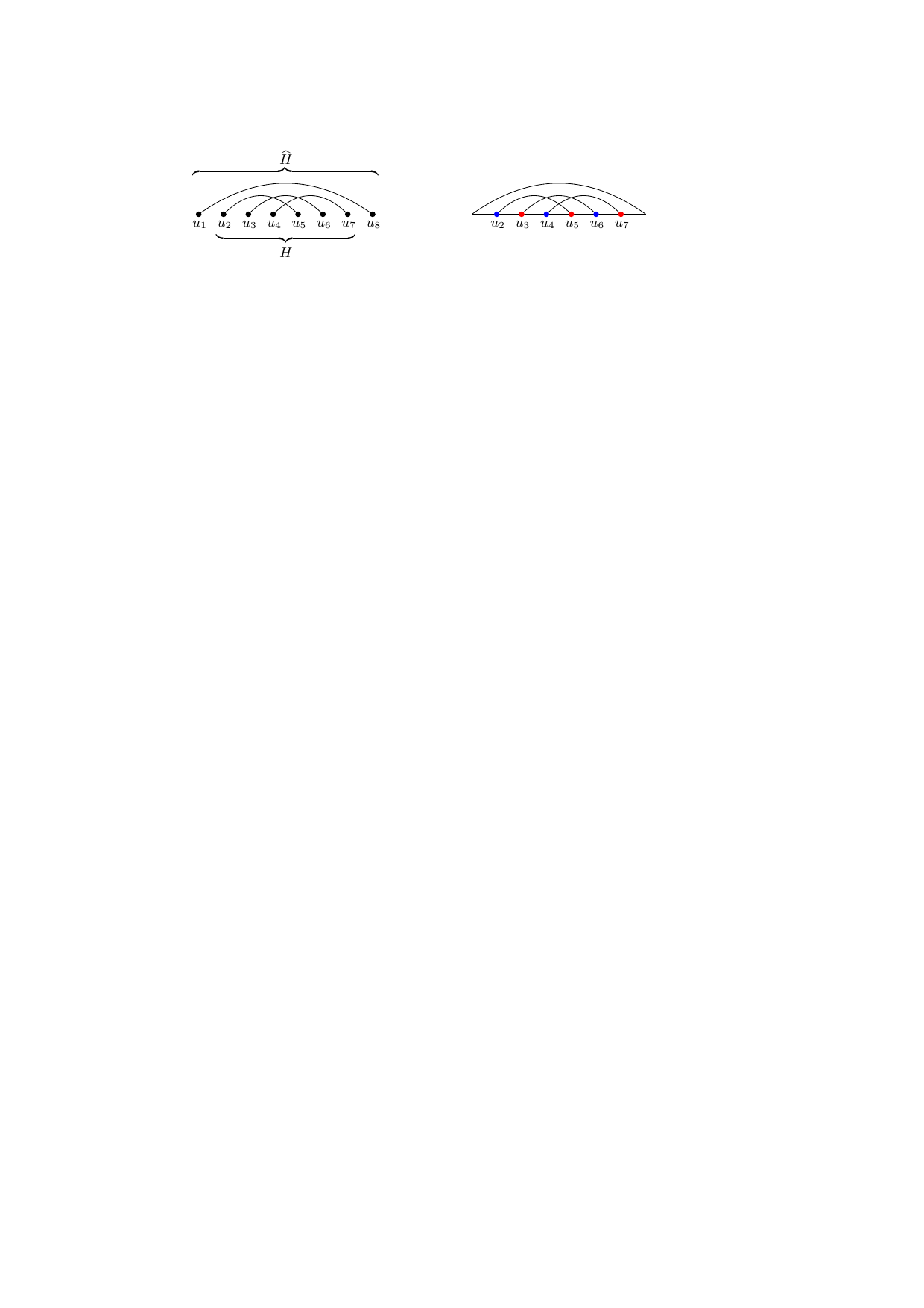}
  \caption{The ordered matchings $H$ and $\widehat{H}$ appearing in the proof of Theorem \ref{thm:planar}.\label{fig:Hhat}}
\end{figure}

Consider a $K_{3,3}$-minor-free graph $G$ with a path $P=v_1,\ldots,v_n$. By discarding the vertices of $G$ not in $P$, we can assume that $P$ is Hamiltonian. We claim that $G$ does not contain the pattern $\widehat{H}$, since otherwise $u_2,u_4,u_6$ on one side and $u_3,u_5,u_7$ on the other side would form a $K_{3,3}$-minor in $G$ (as illustrated in  Figure \ref{fig:Hhat}, right). The result then follows from the paragraph above.
\end{proof}

A similar argument yields the following more general result, which was also proved in \cite{esperet2017long} using a much more involved proof.

\begin{theorem}\label{thm:genus}
For any fixed surface $\Sigma$, any graph embeddable on $\Sigma$ which contains a path of order $n$ contains an induced path of order $\Omega(\sqrt{\log n})$.
\end{theorem}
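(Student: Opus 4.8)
The plan is to run the proof of Theorem~\ref{thm:planar} on many disjoint pieces of the path at once, and to pay for each ``bad'' piece with one unit of Euler genus. Recall from the proof of Theorem~\ref{thm:planar} that if $H$ is the ordered matching on $u_2,\dots,u_7$ with edges $u_2u_5,u_3u_6,u_4u_7$, then $g_{\widehat H}(n)=\Omega(\sqrt{\log n})$ (by Theorem~\ref{thm:matchingplog} together with Lemma~\ref{lem:hat}), and that whenever a pair $(G',P')$ contains $\widehat H$ as a pattern the graph $G'$ contains $K_{3,3}$ as a minor. Moreover, by the explicit branch sets used there, this $K_{3,3}$-minor is confined to the vertices lying, along $P'$, between the images of $u_1$ and $u_8$; that is, to the vertex set of that copy of $\widehat H$ together with the path segments it spans.

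Now let $\Sigma$ have Euler genus $g$ and let $G$ embed on $\Sigma$ with a path of order $n$. First I would discard the vertices outside the path, so that $P=v_1,\dots,v_n$ is a Hamiltonian path of $G$; I may also assume $n>(g+1)^2$, since otherwise $\sqrt{\log n}=O_\Sigma(1)$ and the statement is vacuous. Then I would cut $P$ into $N:=\lfloor\sqrt n\rfloor$ consecutive subpaths $P^{(1)},\dots,P^{(N)}$, each of order at least $\lfloor n/N\rfloor$, and set $G_j:=G[V(P^{(j)})]$, a graph with Hamiltonian path $P^{(j)}$. If at least $N-g$ of the pairs $(G_j,P^{(j)})$ avoid the pattern $\widehat H$, then fixing one such $j$ gives, by definition of $g_{\widehat H}$ and the bound above, an induced path of $G_j$ — hence of $G$, since $G_j$ is an induced subgraph of $G$ — of order at least $g_{\widehat H}(\lfloor n/N\rfloor)=\Omega(\sqrt{\log n})$, and we are done.

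So the remaining case is that at least $g+1$ of the pairs $(G_j,P^{(j)})$ contain $\widehat H$ as a pattern. For each such $j$, the implication recalled above yields a $K_{3,3}$-minor of $G$ living entirely inside $V(P^{(j)})$, so in particular a non-planar subgraph of $G$ on the vertex set $V(P^{(j)})$. Since the sets $V(P^{(j)})$ are pairwise disjoint, $G$ then contains $g+1$ pairwise vertex-disjoint non-planar subgraphs; but it is standard that a graph of Euler genus $g$ cannot contain $g+1$ pairwise vertex-disjoint non-planar subgraphs (each contributes at least one unit of genus, and Euler genus is minor-monotone and additive over disjoint unions). This contradicts the embeddability of $G$ on $\Sigma$ and finishes the argument.

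There is no deep obstacle here; the part requiring care is that one keeps the \emph{same} $4$-edge pattern $\widehat H$ for every surface. If instead one tried to exploit that each surface $\Sigma$ excludes a large $K_{3,t}$-minor and to realise $K_{3,t}$ from a matching pattern, Theorem~\ref{thm:matchingplog} would give an exponent of order $1/t$ rather than $1/2$. Keeping the pattern fixed and letting $\Sigma$ enter only through the pigeonhole count ``at most $g$ bad pieces'' is exactly what preserves the exponent $1/2$; the genus then intervenes solely via the additivity of Euler genus over vertex-disjoint non-planar subgraphs. The only mild technical check is the confinement of the $K_{3,3}$-minor to the span of its copy of $\widehat H$, which is precisely what the construction behind Theorem~\ref{thm:planar} provides.
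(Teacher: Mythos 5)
Your proposal is correct and takes essentially the same approach as the paper. The paper packages the pigeonhole step through the concatenated pattern $H_k=\widehat{H}\cdot\widehat{H}\cdots\widehat{H}$ ($k+1$ copies) and Lemma~\ref{lem:concat}, whereas you cut the path into $\lfloor\sqrt{n}\rfloor$ pieces and pigeonhole directly; both then rely on the same ingredients, namely the pattern $\widehat{H}$ with $g_{\widehat{H}}(n)=\Omega(\sqrt{\log n})$, the observation that the $K_{3,3}$-minor produced by a copy of $\widehat{H}$ is confined to the corresponding path segment, and the additivity of Euler genus over vertex-disjoint non-planar minors.
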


\begin{proof}
Let $k$ be the Euler genus of $\Sigma$ ($k$ is a fixed constant), and let $H_k$ be the pattern consisting of the concatenation of $k+1$ consecutive copies of the ordered matching $\widehat{H}$ introduced in the proof of Theorem \ref{thm:planar} (and illustrated in Figure \ref{fig:Hhat}). Recall that by Lemma \ref{lem:hat}, $g_{\widehat{H}}(n)=\Omega(\sqrt{\log n})$, so it directly follows from Lemma \ref{lem:concat} that $g_{H_k}(n)=\Omega(\sqrt{\log n})$.
Note that the same argument as in the proof of Theorem \ref{thm:planar} shows that if a graph $G$ with a Hamiltonian path $v_1,\ldots,v_n$ contains the pattern $H_k$, then $G$ contains as a minor the disjoint union of $k+1$ copies of $K_{3,3}$. It directly follows from Euler's formula that the disjoint union of $k+1$ copies of $K_{3,3}$ is not embeddable on a surface of Euler genus $k$ (see Section 4.4 in \cite{MoharThom}), so we obtain a contradiction.
\end{proof}

\begin{figure}[htb]
  \centering
    \includegraphics[scale=1]{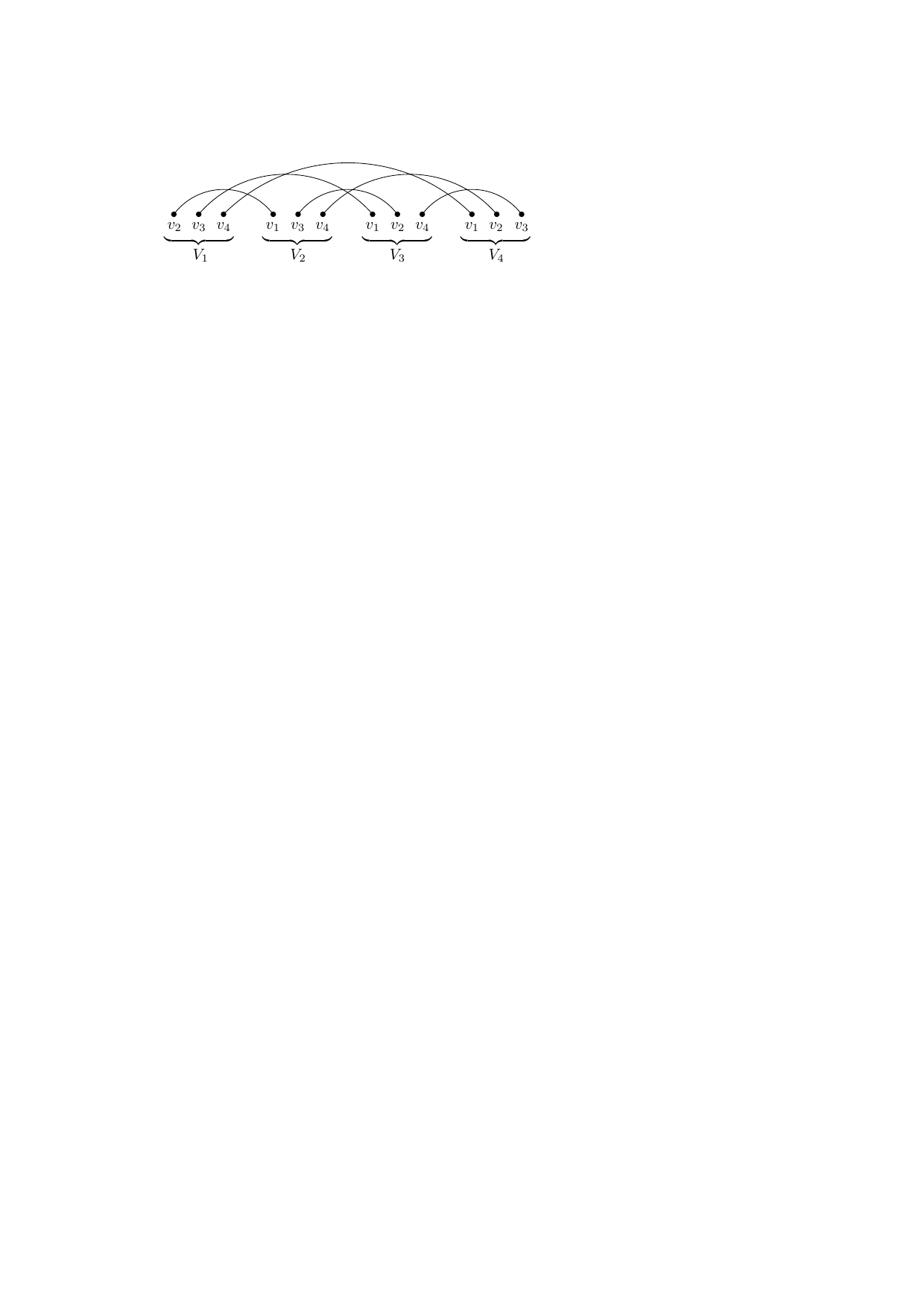}
  \caption{The ordered graph $\Pi(K_4)$ (where vertices are ordered from
    left to right).\label{fig:pi4}}
\end{figure}

Let $H$ be a graph with vertex set $v_1,\ldots,v_k$. We now  describe the construction of an
ordered perfect matching $\Pi(H)$ such that containing the pattern $\Pi(H)$ implies containing $H$ as a minor. For any $1\le i \le k$, we define $V_i$ as the sequence of neighbors of $v_i$ in $H$ (in an arbitrary order). The vertex set of $\Pi(H)$ is the concatenation of all sets $V_i$, for $1\le i \le k$. Note that each vertex $v_i$ appears several times in $\Pi(H)$ (once in every set $V_j$ such  that $v_j$ is a neighbor of $v_i$ in $H$). For each pair of adjacent vertices $v_i,v_j$ in $H$, we add an edge in $\Pi(H)$ between the vertex $v_i$ of the set $V_j$ and the vertex $v_j$ of the set $V_i$. Note that $\Pi(H)$ is a perfect matching, with $|E(H)|$ edges. See Figure \ref{fig:pi4} for a
picture of $\Pi(K_4)$). It can be observed that if a
graph $G$ with a Hamiltonian path $P$ is such that $(G,P)$ contains the pattern $\Pi(H)$, then
$G$ contains $H$ as a minor (it suffices to
contract each subpath of $P$ delimited by a set $V_i$ into a single
vertex). It follows that if a graph $G$ is $H$-minor-free, then for
any Hamiltonian path $P$ in $G$, the pair $(G,P)$ avoids the ordered
perfect matching $\Pi(H)$ as a pattern. Since $\Pi(H)$ contains $|E(H)|$
edges, we obtain the following direct
consequence of Theorem \ref{thm:matchingplog}.

\begin{corollary}\label{cor:Hminorfree}
Let $H$ be a graph and let $G$ be an $H$-minor-free graph with a path of order $n$. Then
$G$ contains an induced path of order $\Omega((\log n)^{1/|E(H)|})$. In particular, if $G$ is $K_h$-minor-free then
$G$ contains an induced path of order $\Omega((\log n)^{2/h(h-1)})$.
\end{corollary}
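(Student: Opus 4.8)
The plan is to plug the ordered perfect matching $\Pi(H)$, constructed in the paragraph preceding the statement, into Theorem~\ref{thm:matchingplog}. Recall the two facts already established about $\Pi(H)$: it is an ordered perfect matching with exactly $|E(H)|$ edges, and whenever a graph $G$ with a Hamiltonian path $P$ is such that $(G,P)$ contains $\Pi(H)$ as a pattern, then $G$ has $H$ as a minor. Contrapositively, if $G$ is $H$-minor-free then $(G,P)$ avoids the pattern $\Pi(H)$ for every Hamiltonian path $P$ of $G$.

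First I would dispose of the degenerate cases: if $|E(H)|\le 1$ the statement is vacuous or trivial (a graph is $K_2$-minor-free if and only if it is edgeless, hence has no path of order $n\ge 2$; if $H$ is edgeless there is nothing to prove), so assume $|E(H)|\ge 2$. Next I would reduce to the case where $G$ has a Hamiltonian path of order $n$: restricting $G$ to the vertex set of a path of order $n$ gives an induced subgraph which is still $H$-minor-free, in which that path is Hamiltonian, and any induced path of this subgraph is an induced path of $G$. Let $P$ be such a Hamiltonian path.

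The bound now follows immediately. Since $(G,P)$ avoids $\Pi(H)$, the definition of $g_{\Pi(H)}$ gives an induced path in $G$ of order at least $g_{\Pi(H)}(n)$. Applying Theorem~\ref{thm:matchingplog} to $\Pi(H)$, which is an ordered matching with $|E(\Pi(H))|=|E(H)|\ge 2$ edges, yields $g_{\Pi(H)}(n)=\Omega((\log n)^{1/(|E(H)|-1)})=\Omega((\log n)^{1/|E(H)|})$, the last step using that $(\log n)^{1/(|E(H)|-1)}\ge(\log n)^{1/|E(H)|}$ for $n\ge 2$. For the ``in particular'' statement, take $H=K_h$, so that $|E(K_h)|=\binom{h}{2}=\tfrac{h(h-1)}{2}$ and the exponent becomes $2/(h(h-1))$.

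There is essentially no obstacle here: the construction of $\Pi(H)$ and the polylogarithmic lower bound for forbidden ordered matchings (Theorem~\ref{thm:matchingplog}) carry all the weight, and the only things left to check are the routine reduction to Hamiltonian paths and the small-case bookkeeping above.
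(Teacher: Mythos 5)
Your proof follows exactly the route the paper intends: forbid the pattern $\Pi(H)$, note $\Pi(H)$ is an ordered perfect matching with $|E(H)|$ edges, and feed it into Theorem~\ref{thm:matchingplog}. The extra bookkeeping you include (reduction to a Hamiltonian path, the $|E(H)|\le 1$ degenerate cases, and noting that the theorem actually gives exponent $1/(|E(H)|-1)\ge 1/|E(H)|$) is correct and makes explicit what the paper states as an immediate consequence.
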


A version of Corollary \ref{cor:Hminorfree} can also be obtained as a consequence of results in \cite{hilaire2023long}, but there the function of $h$ in the exponent was left unspecified (the proof there was based on the structure theorem for graphs excluding a topological minor by Grohe and Marx \cite{GM15}, itself based on the graph minor structure theorem of Robertson and Seymour, for which the currently best known explicit functions of $h$ are superexponential \cite{KTW20}). In a companion paper \cite{II} we will see how to improve the exponent in Corollary \ref{cor:Hminorfree} from $\Omega(1/h^2)$ to $\Omega(1/h(\log h)^2)$, even in the more general setting of forbidden topological minors. The arguments are significantly more involved.  

\medskip

We can now give a short proof of Corollary \ref{cor:minclosed} stated in the introduction, which says that when $\mathcal{C}$ is proper minor-closed, $g_\mathcal{C}(n)$ is polynomial if $\mathcal{C}$ exludes some outerplanar graph and polylogarithmic otherwise.

\begin{proof}[Proof of Corollary \ref{cor:minclosed}]
Assume first that $\mathcal{C}$ contains all outerplanar graphs. Then Theorem \ref{th:arocha} implies that $f_\mathcal{C}$ is at most logarithmic, while Corollary \ref{cor:Hminorfree} implies that $f_\mathcal{C}$ is at least polylogarithmic.

\begin{figure}[htb]
  \centering
    \includegraphics[scale=1]{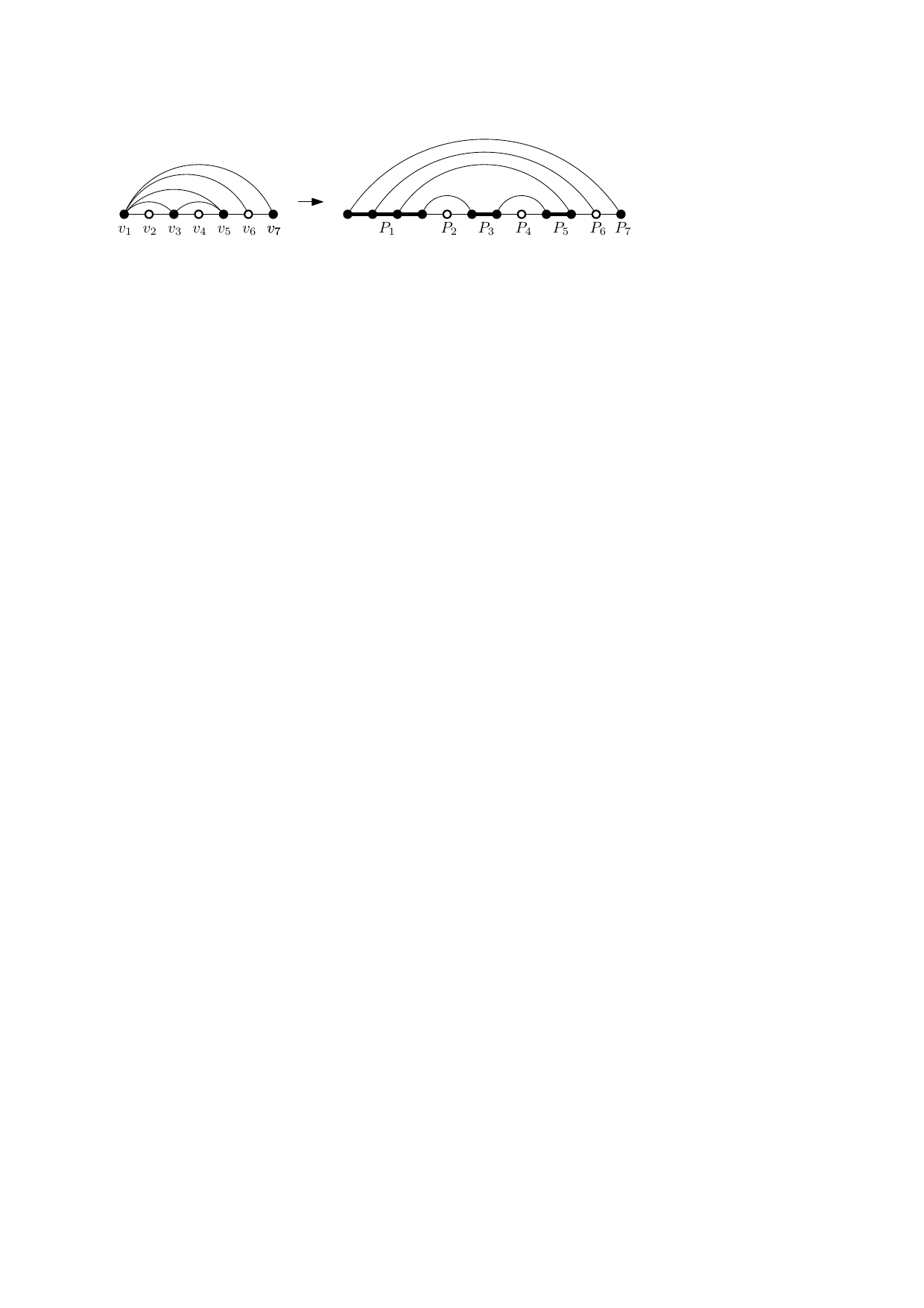}
  \caption{Excluding some outerplanar graph as a minor is equivalent to excluding a non-crossing matching as a pattern.\label{fig:outer}}
\end{figure}

Assume now that $\mathcal{C}$ excludes some outerplanar graph $H'$. In particular $\mathcal{C}$ excludes all supergraphs of $H'$, and thus some maximal outerplanar graph $H$. As $H$ is maximal outerplanar, it is Hamiltonian. Let $P=v_1,\ldots,v_k$ be a Hamiltonian path in $H$. For each $1\le i \le k$, replace $v_i$ by a path $P_i$ of vertices that are each incident to a distinct edge of $E(H)-E(P)$ incident to $v_i$ (see Figure \ref{fig:outer}). The resulting graph is an outerplanar graph consisting of the union of a Hamiltonian path and a non-crossing matching, and if a graph with a Hamiltonian path contains the corresponding  matching as a pattern, then it also contains $H$ as a minor (simply contract each subpath of the Hamiltonian path corresponding to some $P_i$ into a single vertex $v_i$). As all graphs from $\mathcal{C}$ exclude $H$ as a minor, it follows from Theorem \ref{thm:ncom} that $f_\mathcal{C}$ is polynomial. 
\end{proof}

Note that any graph $H$ from a family of bounded degree expanders has treewidth $\Theta(|V(H)|)=\Theta(|E(H)|)$. We thus obtain the following consequence of Corollary \ref{cor:Hminorfree}.

\begin{corollary}\label{cor:tw}
There exists a constant $c>0$ such that any graph $G$ with a path of order $n$ and with treewidth at most $t$ has an induced path of order $\Omega((\log n)^{c/t})$.
\end{corollary}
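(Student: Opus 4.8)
The plan is to combine three standard ingredients: (i) bounded-degree expanders have treewidth linear in their number of vertices (this is precisely the fact quoted just before the statement, and it also forces them to have only linearly many edges); (ii) treewidth is minor-monotone; and (iii) \Cref{cor:Hminorfree}, which gives a polylogarithmic lower bound on the order of an induced path in an $H$-minor-free graph in terms of $|E(H)|$. The observation is that "excluding an expander on $\Theta(t)$ vertices as a minor" is a consequence of "having treewidth at most $t$", so the bound of \Cref{cor:Hminorfree}, applied with such an expander, transfers to all graphs of treewidth at most $t$.

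Concretely, I would first fix once and for all a family of bounded-degree expanders: there are constants $D\ge 1$ and $b>0$ such that for every integer $m\ge 1$ there is a graph $H_m$ on $m$ vertices with maximum degree at most $D$ and $\tw(H_m)\ge bm$ (random $3$-regular graphs, or explicit Ramanujan graphs, work; this is the content of the remark preceding the statement, where it is noted that such graphs satisfy $\tw(H_m)=\Theta(|V(H_m)|)=\Theta(|E(H_m)|)$). Given $t$, set $m=\lceil (t+1)/b\rceil$, so that $\tw(H_m)\ge bm\ge t+1$ while $|E(H_m)|\le Dm/2\le c' t$ for a suitable absolute constant $c'$ and all $t\ge 1$. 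Now let $G$ be any graph with a path of order $n$ and $\tw(G)\le t$. If $G$ contained $H_m$ as a minor then, by minor-monotonicity of treewidth, $\tw(G)\ge \tw(H_m)\ge t+1$, a contradiction; hence $G$ is $H_m$-minor-free. \Cref{cor:Hminorfree} then yields an induced path of order $\Omega\!\left((\log n)^{1/|E(H_m)|}\right)\ge\Omega\!\left((\log n)^{1/(c' t)}\right)$, and one concludes by taking $c=1/c'$.

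The only delicate point --- and the nearest thing to an obstacle --- is the \emph{uniformity} of the constant $c$: we want a single $c>0$ valid for all $t$, which requires that the constant implicit in the $\Omega$ of \Cref{cor:Hminorfree} does not deteriorate as $|E(H)|$ grows. Inspecting the proof of \Cref{thm:matchingplog} (the engine behind \Cref{cor:Hminorfree}) shows that it does not: the induced path produced there has order at least roughly $\tfrac14(\log n)^{1/(|E(H)|-1)}$, with an absolute multiplicative constant; and for values of $n$ small enough that this expression is below $2$ the statement is vacuous anyway, since a graph with a path of order $n\ge 2$ contains an edge, hence an induced path of order $2$. Apart from this bookkeeping, no new idea is needed; in particular only the \emph{existence} of such an expander family is used, not any explicit construction.
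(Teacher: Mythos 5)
Your proof is correct and takes essentially the same route as the paper: exclude a bounded-degree expander $H_m$ of treewidth $t+1$ (hence with $\Theta(t)$ edges) as a minor, then invoke Corollary~\ref{cor:Hminorfree}. You have simply made explicit the details that the paper compresses into a one-line remark about expanders, including the bookkeeping for uniformity of the $\Omega$-constant which the paper does not discuss.
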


A similar result was obtained in \cite{hilaire2023long} with a different proof, and a better constant $c$ in the exponent.

\medskip

We conclude this section with a quick application of Theorem \ref{tm:matchingBound} to graphs of bounded pathwidth (the same result was proved in \cite{hilaire2023long} using a different technique). The \emph{pathwidth} of a graph $G$ is denoted by $\mathrm{pw}(G)$. A \emph{complete ternary tree} of height $t$ is a rooted tree in which every non-leaf node has three children
and every leaf is at distance $t$ from the root. Such a tree has $3^{t+1} - 1$ nodes. It is known that the pathwidth of a complete ternary tree of height $t$ is $t$ (see \cite{GJWW}), and that if $H$ is a minor of $G$ we have $\text{pw}(H) \le \text{pw}(G)$. 

\begin{corollary}\label{cor:pw_bound}
If a graph $G$ has pathwidth less than $t$ and contains a path on $n$ vertices, then $G$ contains an induced path on $\Omega(n^{1/t})$ vertices.
\end{corollary}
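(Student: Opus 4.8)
The plan is to realize bounded pathwidth as the exclusion of a non-crossing ordered matching pattern, and then invoke Theorem~\ref{tm:matchingBound}. The key fact I will use is the one quoted just before the statement: a complete ternary tree $T_t$ of height $t$ has pathwidth exactly $t$, and pathwidth is minor-monotone. Hence a graph $G$ with $\mathrm{pw}(G)<t$ is $T_t$-minor-free. So it suffices to exhibit a non-crossing ordered \emph{perfect} matching $H_t$ of bounded depth such that whenever a pair $(G,P)$ (with $P$ a Hamiltonian path of $G$) contains $H_t$ as a pattern, $G$ has $T_t$ as a minor; then any $G$ with $\mathrm{pw}(G)<t$ and a Hamiltonian path avoids $H_t$, and Theorem~\ref{tm:matchingBound} gives an induced path of order $\Omega(n^{1/\mathrm{depth}(H_t)})$.

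The matching $H_t$ is built by a depth-first traversal of $T_t$, mimicking the construction of $\Pi(H)$ from the excerpt but adapted to a tree so as to control the depth: list the nodes of $T_t$ in the order visited by a DFS, and for each edge $xy$ of $T_t$ place a pair of matched vertices consecutively at the two times this edge is traversed (once going down, once coming back up), so that the resulting ordered matching is non-crossing (the DFS nesting structure guarantees no two edges cross) and has one edge per edge of $T_t$. Contracting, in a graph $G$ that contains this pattern along a Hamiltonian path, each maximal run of consecutive pattern-vertices that all correspond to the same node of $T_t$ into a single vertex exhibits $T_t$ as a minor of $G$, exactly as in the $\Pi(H)$ argument. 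The point of using a \emph{ternary} tree (rather than an arbitrary tree of pathwidth $t$) is that the recursive ``descend into a subtree, come back, descend into the next'' pattern makes the depth of $H_t$ linear in $t$: each time we recurse one level deeper in the tree we nest one more level of matched pairs, but the three children at a node are handled in sequence (concatenation), which does not add to the depth. A short induction on $t$, using the inductive definition of depth (concatenation takes the max, nesting inside an edge adds one), shows $\mathrm{depth}(H_t)=O(t)$ — indeed one can arrange $\mathrm{depth}(H_t)\le t+O(1)$, or at worst $\le ct$ for an absolute constant $c$, which is all we need since the exponent in Corollary~\ref{cor:pw_bound} is only claimed up to the constant hidden in ``$\Omega(n^{1/t})$''.

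Assembling: $G$ with $\mathrm{pw}(G)<t$ and a path on $n$ vertices may be assumed (by restricting to that path) to have a Hamiltonian path $P$ on $n$ vertices; since $\mathrm{pw}(G)<t=\mathrm{pw}(T_t)$ and pathwidth is minor-monotone, $G$ is $T_t$-minor-free, hence $(G,P)$ avoids $H_t$; Theorem~\ref{tm:matchingBound} then yields an induced path of order $\Omega\big(n^{1/\mathrm{depth}(H_t)}\big)=\Omega(n^{1/t})$, absorbing the constant factor in the exponent into the $\Omega$. The main thing to get right — and the only real obstacle — is the construction of $H_t$ together with the verification that it is non-crossing and that its depth grows only linearly in the height of the ternary tree; once that is in hand the minor-containment implication is the same bookkeeping as for $\Pi(H)$, and the rest is immediate.
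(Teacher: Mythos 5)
Your construction is essentially the paper's own: listing the matched pairs of $H_t$ according to a depth-first traversal of the ternary tree $T_t$ produces exactly the recursive matching $M_t$ defined by $M_0=\varnothing$ and $M_i=\widehat{M_{i-1}}\cdot\widehat{M_{i-1}}\cdot\widehat{M_{i-1}}$, and your contraction argument for recovering $T_t$ as a minor is the same as the paper's. So the route is the right one.

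There is, however, a genuine error in your handling of the depth. You argue that it is enough to know $\mathrm{depth}(H_t)\le ct$ (or even $t+O(1)$) because the discrepancy ``can be absorbed into the $\Omega$''. This is false: $\Omega(n^{1/t})$ means ``at least $C\cdot n^{1/t}$ for some constant $C>0$'', and a multiplicative constant cannot compensate for a smaller exponent. Indeed $n^{1/(ct)}=n^{1/t}\cdot n^{1/(ct)-1/t}$ and the second factor tends to $0$ as $n\to\infty$ whenever $c>1$; even $n^{1/(t+1)}$ is not $\Omega(n^{1/t})$. So you genuinely need $\mathrm{depth}(H_t)=t$, not just $O(t)$. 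Fortunately your DFS construction does deliver this on the nose: the matched pair for the edge from the root to its $j$-th child nests the entire matching $H_{t-1}$ for that subtree and nothing else, and the three subtrees are traversed consecutively, so $H_t=\widehat{H_{t-1}}\cdot\widehat{H_{t-1}}\cdot\widehat{H_{t-1}}$; then the recursion $\mathrm{depth}(A\cdot B)=\max(\mathrm{depth}(A),\mathrm{depth}(B))$ together with $\mathrm{depth}(\widehat{A})=\mathrm{depth}(A)+1$ gives $\mathrm{depth}(H_t)=t$ by a one-line induction. With that correction the proof is complete and matches the paper's.
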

\begin{proof} 
We construct a sequence of ordered perfect matchings $(M_i)_{i \ge 1}$ such that for any graph $G$ with a Hamiltonian path $P$, either $(G,P)$ avoids the pattern $M_i$ or $\mathrm{pw}(G)\ge i$.

\begin{figure}[htb]
\centering
\begin{tikzpicture}[scale = 0.2, every node/.style = {black node, minimum size=0.1cm}]
\foreach \o in {1, 27, 53}{
    \draw[orange, thick] (\o-1, 0) node {} to[bend left=90] ++ (25,0) node {};
    \begin{scope}[xshift=\o cm]
        \foreach \s in {1,9,17}{
            \draw[blue, thick] (\s-1, 0) node {} to[bend left=90] ++ (7,0) node{};
            \foreach \i in {0,2,4}{
                \begin{scope}[xshift = \s cm]
                    \draw[red, thick] (\i, 0) node {} to[controls=+(90:1) and +(90:1)] ++(1,0) node {};
                \end{scope}
            }
        }
    \end{scope}
}
\end{tikzpicture}
\caption{The non-crossing ordered perfect matching $M_3$ used in the proof of \Cref{cor:pw_bound}.}
\label{fig:pw_pat}
\end{figure}
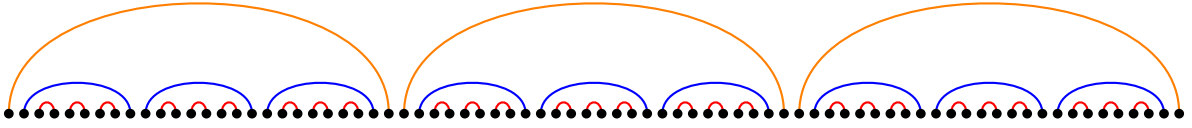

The sequence $(M_i)_{i \ge 0}$ is defined inductively as follows: $M_0$ is the empty graph, and for any $i\ge 1$, $M_i=\widehat{M_{i-1}}\cdot \widehat{M_{i-1}}\cdot \widehat{M_{i-1}}$ (we recall that $\cdot$ denotes the concatenation, and the operator $H\mapsto \widehat{H}$ was defined shortly before Lemma \ref{lem:hat}). See \Cref{fig:pw_pat} for an illustration of $M_3$.
Suppose a Hamiltonian graph $G$ contains the pattern $M_i$. Then one can find a ternary tree of height $i$ as a minor in $G$ by contracting together the three larger edges of $M_i$, then for each copy of $M_{i-1}$, contracting together the three larger edges of the copy of $M_{i-1}$ and so on.

As the pathwidth of a ternary tree of height $i$ is exactly $i$, Hamiltonian graphs of pathwidth less than $i$ (ordered  according to their Hamiltonian path) avoid the pattern $M_i$.
Since $\mathrm{depth}(M_i)=i$ for any $i\ge 0$, it follows from \Cref{tm:matchingBound} that if a Hamiltonian graph has pathwidth less than $t$, it contains an induced path on $\Omega(n^{1/t})$ vertices.
\end{proof}

\subsection{General patterns}
\label{sec:triple}

We start by giving the following immediate consequence of \Cref{th:main} (on the exclusion of $K_{t,t}$ as a subgraph) for the exclusion of an \emph{ordered} bipartite subgraph.
\begin{theorem}\label{th:ordbip}
Let $v_1 \dots v_h$ denote the vertices of an ordered bipartite graph $H$.
If for some $i\in \intv{1}{h}$, every edge of $H$ is of the form $v_jv_{j'}$ with $j\leq i<j'$ then $g_H(n) = \Omega_h((\log \log n/\log \log \log n)^{1/5})$.
\end{theorem}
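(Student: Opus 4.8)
The plan is to reduce Theorem~\ref{th:ordbip} to Theorem~\ref{th:main} by showing that a pair $(G,P)$ which avoids the pattern $H$ must have bounded $K_{t,t}$-subgraphs, for an appropriate $t=t(h)$, \emph{after a harmless local modification}. The point is that $H$ is an ordered bipartite graph whose two parts $\{v_1,\dots,v_i\}$ and $\{v_{i+1},\dots,v_h\}$ appear consecutively in the order, so $H$ is an ordered subgraph of $K_{a,b}$ with $a\le i$, $b\le h-i$ and with all of the ``$a$''-side preceding all of the ``$b$''-side. Hence if $G$ (viewed as an ordered graph via $P$) contains a large $K_{t,t}$ as a subgraph with $t$ large in terms of $h$, then by picking $a$ vertices of the first part and $b$ of the second part in a way consistent with the linear order --- which is possible since a $K_{t,t}$ subgraph, once we order its two sides along $P$, still contains an ordered $K_{t,t}$ in which every vertex of the ``left'' $t$ is before every vertex of the ``right'' $t$ \emph{after discarding at most finitely many vertices}, or more simply: since $K_{t,t}$ with $t\ge \max(a,b)$ trivially contains, as an ordered subgraph with the parts consecutive, the graph $H$ --- we would find $H$ as a pattern in $(G,P)$, provided the edges we use are not edges of $P$.

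The slight subtlety is the ``not an edge of $P$'' condition. To handle it, first pass to $G[V(P)]$ so that $P$ is Hamiltonian, and note that removing the $n-1$ edges of $P$ from $G$ changes the maximum size of a $K_{s,s}$ subgraph by at most an additive constant: indeed each vertex loses at most $2$ neighbors, so a $K_{s,s}$ subgraph of $G$ yields a $K_{s-2,s-2}$ subgraph of $G-E(P)$ (delete from one side the $\le 2$ vertices of the other side adjacent to it along $P$, iterating is not even needed since each vertex has at most two $P$-neighbors total). So: set $t:=\max(i,h-i)+2$. If $G-E(P)$ has a $K_{t-2,t-2}$ subgraph, then ordering its two sides along $P$ and keeping the $\max(i,h-i)$ smallest vertices of the side that should be ``left'' and the $\max(i,h-i)$ largest of the side that should be ``right'' --- wait, both sides interleave, so instead argue: $G-E(P)$ contains $K_{t-2,t-2}$, hence by Ramsey on the order it contains an ordered $K_{a',b'}$ with the $a'$ part entirely before the $b'$ part, for any $a'+b'$ bounded in terms of $t$; choosing $t$ large enough ($t-2\ge R(\max(i,h-i);\ldots)$, a Ramsey number depending only on $h$) guarantees $a'\ge i$ and $b'\ge h-i$ with the parts consecutive, which contains $H$ as an ordered subgraph, i.e.\ $(G,P)$ contains the pattern $H$, a contradiction. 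Therefore $G-E(P)$, and hence $G$, is $K_{t,t}$-subgraph-free for $t=t(h)$.

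Now apply Theorem~\ref{th:main} to $G$ with this $t$: since $G$ has a path of order $n$ and no $K_{t,t}$ subgraph, $G$ contains an induced path of order at least $c(t)\,(\log\log n/\log\log\log n)^{1/5}$, and since $c(t)$ depends only on $t$ and hence only on $h$, this is $\Omega_h((\log\log n/\log\log\log n)^{1/5})$, which is exactly the claimed bound $g_H(n)=\Omega_h((\log\log n/\log\log\log n)^{1/5})$.

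The main obstacle I anticipate is making the ``ordered Ramsey'' step clean: from a (large but unordered) $K_{t,t}$ subgraph, extract an ordered complete bipartite graph whose two parts occupy disjoint intervals of $P$ in the right order, so that it literally contains the ordered bipartite graph $H$. This is a two-colour Ramsey-type statement on pairs within each side of the $K_{t,t}$ (partition each side into ``vertices appearing before a cut'' and ``after''), and one must choose the cut and the quantities carefully; but it only costs a Ramsey factor depending on $h$, which is absorbed into the implicit constant. Everything else is bookkeeping: the reduction to a Hamiltonian path, the loss of at most two neighbors per vertex when deleting $E(P)$, and the fact that the constant in Theorem~\ref{th:main} depends only on $t$ and therefore only on $h$.
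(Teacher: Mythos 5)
Your high-level plan --- show that $(G,P)$ avoiding the pattern $H$ forces $G$ to be $K_{t,t}$-subgraph-free for some $t=t(h)$ and then invoke Theorem~\ref{th:main} --- is exactly the paper's proof, and it works. But the intermediate step you insert to handle the edges of $P$ is incorrect. You assert that a $K_{s,s}$ subgraph of $G$ yields a $K_{s-2,s-2}$ subgraph of $G-E(P)$, justified by ``each vertex loses at most $2$ neighbors, so delete from one side the $\leq 2$ vertices of the other side adjacent to it along $P$, iterating is not even needed.'' That deletion only clears the $P$-neighbors of a \emph{single} vertex; to clear them for all $s$ vertices of one side you could end up deleting up to $2s$ vertices of the other side. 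And the conclusion is actually false: if $P$ visits the two sides of $K_{s,s}$ in alternation $a_1b_1a_2b_2\cdots a_sb_s$, then the largest complete bipartite subgraph of $K_{s,s}-E(P)$ with one part inside $\{a_1,\dots,a_s\}$ and the other inside $\{b_1,\dots,b_s\}$ has side size only about $s/2$, not $s-2$. The same alternating example refutes the parenthetical claim that ``$K_{t,t}$ with $t\geq\max(a,b)$ trivially contains, as an ordered subgraph with the parts consecutive, the graph $H$.''

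The paper sidesteps all of this with a single pigeonhole in place of both the $K_{s-2,s-2}$ extraction and the Ramsey step: take $t=2h$, and if $G$ has a $K_{2h,2h}$ subgraph, then among its first $2h$ vertices in $P$-order at least $h$ lie on one colour class, while among its last $2h$ vertices at least $h$ lie on the other. This produces an ordered $K_{h,h}$ in which one colour class entirely precedes the other, which contains $H$ as an ordered subgraph because every edge of $H$ crosses the cut at position $i$; the at most one $P$-edge between the two blocks is easily dodged since $1\leq i\leq h-1$, yielding $H$ as a pattern and hence a contradiction. Replacing your two faulty intermediate claims by this pigeonhole argument (with $t=2h$) makes your write-up correct and identical to the paper's.
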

\begin{proof}
Observe that every arbitrarily ordered $K_{2h,2h}$ contains as a pattern an ordered $K_{h,h}$ where all the vertices from one color class come before the other vertices. This ordered $K_{h,h}$ contains $H$ as a pattern. So if $G$ is a $n$-vertex graph  with a Hamiltonian path $P$ such that $(G,P)$ avoids the pattern $H$, then $G$ is $K_{2h,2h}$-subgraph free. So by \Cref{th:main} $G$ has an induced path on  $\Omega_h((\log \log n/\log \log \log n)^{1/5})$ vertices.
\end{proof}

Recall that a graph is said to be \emph{$d$-degenerate} (for some $d\in \N$) if every subgraph of it has a vertex of degree at most $d$. We have the following consequence of \Cref{th:ordbip}.

\begin{corollary}\label{cor:bddeg}
If $\mathcal{C}$ is the class of $d$-degenerate graphs, then \[f_{\mathcal{C}}(n) \geq \Omega_d((\log \log n/\log \log \log n)^{1/5})).\]
\end{corollary}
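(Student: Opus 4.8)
The plan is to reduce to \Cref{th:ordbip} (equivalently, directly to \Cref{th:main}) via the elementary observation that a $d$-degenerate graph cannot contain $K_{d+1,d+1}$ as a subgraph: every vertex of $K_{d+1,d+1}$ has degree $d+1$, so $K_{d+1,d+1}$ is not $d$-degenerate, and since the property of being $d$-degenerate is inherited by subgraphs, no $d$-degenerate graph admits $K_{d+1,d+1}$ as a subgraph.

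First I would fix $H$ to be the ordered graph $K_{d+1,d+1}$ in which all $d+1$ vertices of one colour class precede all $d+1$ vertices of the other. This is an ordered bipartite graph on $h=2(d+1)$ vertices of exactly the form required by \Cref{th:ordbip}, so $g_H(n)=\Omega_d\big((\log\log n/\log\log\log n)^{1/5}\big)$. Next I would verify that every $d$-degenerate graph falls under the scope of $g_H$: given $G\in\mathcal{C}$ with a path $P$ of order $n$, pass to the induced subgraph $G[V(P)]$, which is still $d$-degenerate and has $P$ as a Hamiltonian path. Then $(G[V(P)],P)$ avoids $H$ as a pattern, since a pattern copy of $H$ would in particular be a $K_{d+1,d+1}$ subgraph of $G[V(P)]-E(P)$, hence of $G$, contradicting $d$-degeneracy. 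Therefore $G[V(P)]$, and a fortiori $G$, contains an induced path of order at least $g_H(n)$, which gives the claimed bound on $f_{\mathcal{C}}(n)$.

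There is essentially no obstacle here; the only points to keep straight are (i) the direction of monotonicity — degeneracy does not increase when passing to a subgraph, so ``not $d$-degenerate'' propagates \emph{upward} to supergraphs — and (ii) that deleting the path edges can only destroy, never create, a $K_{d+1,d+1}$, which is immediate since $G-E(P)$ is a subgraph of $G$. One could equally well bypass \Cref{th:ordbip} and simply invoke \Cref{th:main} with $t=d+1$ on $G[V(P)]$: since $G[V(P)]$ has no $K_{d+1,d+1}$ subgraph and has a path (in fact a Hamiltonian path) of order $n$, it has an induced path of order $\Omega_d\big((\log\log n/\log\log\log n)^{1/5}\big)$.
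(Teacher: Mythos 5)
Your argument is correct and is exactly the paper's: $d$-degenerate graphs exclude $K_{d+1,d+1}$ as a subgraph (hence as the ordered pattern in \Cref{th:ordbip}), so the bound on $g_H$ transfers to $f_{\mathcal{C}}$. The alternative route you mention, applying \Cref{th:main} directly with $t=d+1$, is likewise valid and equivalent.
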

Indeed, $d$-degenerate graphs exclude $K_{d+1,d+1}$ as subgraphs, so in particular a $d$-degene\-rate graph ordered by a Hamiltonian path excludes as pattern the ordered $K_{d+1,d+1}$ and we can apply \Cref{th:ordbip}. This bound however is not optimal: Ne{\v{s}}et{\v{r}}il and Ossona de Mendez proved in \cite{nevsetvril2012sparsity} that $f_{\mathcal{C}}(n) = \Omega_d(\log \log n)$. As we will recall in \Cref{sec:open}, it remains an open problem whether the general bound of \Cref{th:ordbip} can be improved to match the aforementioned $\Omega(\log \log n)$.

\medskip

In the remainder of this section, we give lower a bound on $g_H$ when $H$ is an ordered half-graph (\Cref{th:log3}). This is a corollary of a translation (\Cref{thm:GRS})  of Galvin, Rival, and Sands' original proof \cite{galvin1982ramsey} of \Cref{th:galvin} to the setting of graphs avoiding patterns.

\medskip

For $n\geqslant 1$, the \emph{ordered half-graph} $\mathcal{H}_{n}$ is the ordered graph with ordered vertex set $a_1,b_1,\ldots,a_n,b_n$ and an edge between $a_i$ and $b_j$ if and only if $i<j$.

\begin{theorem}[from \cite{galvin1982ramsey}]\label{thm:GRS}
Let $G$ be a graph with a Hamiltonian path $v_1,\ldots,v_n$, with $n\ge 2$. Then there is $p=\Theta((\log^{(3)} n/\log^{(4)}n)^{1/3})$ such that $G$ (considered as an ordered graph with order $v_1,\ldots,v_n$) contains an ordered half-graph $\mathcal{H}_{p/4}$ as a pattern, or $G$ contains an increasing induced path with $p$ vertices.
\end{theorem}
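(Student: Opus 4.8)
The plan is to mimic the Galvin--Rival--Sands argument, but phrased in the ordered/pattern language, and to keep careful track of the number of nested logarithms so that the claimed bound $p=\Theta((\log^{(3)} n/\log^{(4)} n)^{1/3})$ comes out. As in the proof of \Cref{th:main}, we may assume $P=v_1,\ldots,v_n$ is a Hamiltonian path of $G$ and we write $\prec$ for the order it induces. We may assume that $G$ has no increasing induced path on $p$ vertices (otherwise we are done), and for every pair $u\prec v$ we let $P_{u,v}$ denote a \emph{longest} increasing induced path from $u$ to $v$; by assumption $|P_{u,v}|<p$. The first step is to set $p$ to be the largest integer for which $n$ is at least a suitable tower: concretely we will need $n\ge R_q(N;3)$ for $q=\Theta(p)$ (the number of colors used below) and $N=\Theta(p)$, and by \Cref{th:ramseyc} this holds as soon as $n\gtrsim q\pow q\pow(2qN)$, i.e. $\log^{(3)} n\gtrsim p^3\log p$; solving gives $p=\Theta((\log^{(3)} n/\log^{(4)} n)^{1/3})$.

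Next I would color the ordered triples of $V(G)$ essentially as in the first stages of the proof of \Cref{th:main}: use $\col_1$ to record the comparisons between $|P_{u,v}|$, $|P_{u,w}|$, $|P_{v,w}|$, so that (by \Cref{lem:sizes} and \Cref{rmk:common-order}) in a monochromatic clique $K$ of size $\ge p$ all the pairwise longest paths have a common order $\ell\in\{2,\ldots,p-1\}$, and then use $\col_2$ (and/or $\col_2^\rev$) to record, for a triple $(u,v,w)$, the position of the last common vertex of $P_{u,v}$ and $P_{u,w}$, so that (by \Cref{lem:split}) on a large monochromatic clique these paths split at a fixed distance $d$ from $u$. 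This is exactly the structure that lets us define, for each interior vertex $v$ of the clique, the canonical vertices $v^+$ and $v^-$ of \Cref{lem:u+-}: the prefix of length $d$ of every $P_{v,w}$ ($v\prec w$ in $K$) ends at the same vertex $v^+$, and symmetrically for $v^-$. The number of colors used so far is $O(p)$ from $\col_1\times\col_2\times\col_2^\rev$ (\Cref{rem:col2}), which is compatible with the budget fixed above.

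Now comes the step that produces the half-graph. Fix a large monochromatic clique $K^\circ=\{w_1\prec\cdots\prec w_m\}$ with $m=\Theta(p)$, with common path-order $\ell$ and split parameters $d,d^\rev$. For $i<j$, the path $P_{w_i,w_j}$ begins with the fixed prefix from $w_i$ to $w_i^+$ and ends with the fixed suffix from $w_j^-$ to $w_j$; by maximality of $P_{w_i,w_j}$ (it cannot be lengthened to an induced path) one shows, arguing as in the concluding part of the proof of \Cref{th:main}, that for every $i<j$ there must be a \emph{chord}, i.e. an edge of $G$ that is not an edge of the relevant path, joining the ``territory'' of $w_i$ to the ``territory'' of $w_j$ (otherwise one could splice together prefixes, midsegments and suffixes to build a longer increasing induced path). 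The precise content of the pattern $\mathcal{H}_{p/4}$ is then: taking $a_i$ (resp. $b_j$) to be appropriate vertices on the prefix of $w_i$ (resp. suffix of $w_j$), we get an edge $a_ib_j$ of $G-E(P)$ for all $i<j$. To make the indices honestly nested we apply one more Ramsey step (for triples again, with $O(p)$ colors recording \emph{which} of the boundedly many candidate prefix/suffix positions carries the chord, as in $\col_3$ of the proof of \Cref{th:main}), extract a monochromatic subclique of size $\Omega(p)$, and read off $\mathcal{H}_{p/4}$ directly.

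The main obstacle, as in \Cref{th:main}, is the bookkeeping: one must check that all the chords found for different pairs $(i,j)$ can be made to land on the \emph{same} relative positions $a_i,b_j$ (so that they really assemble into an ordered half-graph rather than an arbitrary bipartite ordered graph), and that the total number of colors across $\col_1\times\col_2\times\col_2^\rev\times\col_3$ stays $O(p)$ — equivalently $\poly(p)$ — so that the Ramsey bound only costs $\log^{(3)} n=\Theta(p^3\log p)$ and the final answer is $p=\Theta((\log^{(3)} n/\log^{(4)} n)^{1/3})$. Once the half-graph is in hand we are done: either $G$ has an increasing induced path on $p$ vertices (the assumption we started by negating), or it contains $\mathcal{H}_{p/4}$ as a pattern.
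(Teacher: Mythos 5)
Your proposal takes a genuinely different route from the paper, and unfortunately it has a gap at the crucial step where the half-graph is supposed to be read off.

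The paper's actual proof is considerably simpler and more direct: it applies Ramsey's theorem for \emph{quadruples} (4-uniform hypergraphs) with about $p^2$ colors. For a 4-tuple $\{i,j,k,\ell\}$ with $i<j<k<\ell$ the color records whether there is an edge between $P_{i,j}$ and $P_{k,\ell}$ and, if so, the positions of the endpoints on the two paths. On a monochromatic 4-clique of size $2p$, the ``no-edge'' color immediately yields a long increasing induced path by concatenating consecutive $P$'s, and a color $(s,t)$ lets one pick, for every \emph{other} consecutive pair, the vertex at position $s$ (resp.\ $t$): these vertices automatically interleave as $a_0\prec b_0\prec a_1\prec b_1\prec\cdots$ because the paths $P_{\pi(4i+1),\pi(4i+2)}$ live in disjoint, nested intervals, and every pair $a_ib_j$ with $i<j$ is an edge by monochromaticity. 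The $p^3\log p$ in the budget comes from $R_{p^2}(2p;4)$. There is no need for longest induced paths, for the machinery of $\col_1,\col_2,\col_2^{\rev}$, or for territories.

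Your attempt to re-use the 3-uniform machinery of \Cref{th:main} fails for two reasons. First, the budget arithmetic is wrong: $R_q(N;3)$ with $q,N=\poly(p)$ gives $\log^{(2)}n\gtrsim\poly(p)$, not $\log^{(3)}n\gtrsim p^3\log p$; so if this approach worked it would prove a statement with a doubly (not triply) iterated logarithm, which should itself have been a warning sign. Second, and more seriously, the chord that $\col_3$ records does not always join a vertex in the territory of $w_i$ to a vertex in the territory of $w_j$: in cases $i=5$ (and parts of cases 1--4) the chord lives on the \emph{middle} segments $\Pmid$, which depend on \emph{two} of the three clique vertices. When you then run the argument of Lemmas \ref{lem:edges123}--\ref{lem:edges132}, the resulting $K_{t,t}$ has the block ordering $a_1\prec\cdots\prec a_t\prec b_1\prec\cdots\prec b_t$. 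This ordered complete bipartite graph does \emph{not} contain $\mathcal{H}_m$ as an ordered subgraph once $m\ge 4$ (in $\mathcal{H}_m$ the vertex $b_2$, a right endpoint of an edge, precedes $a_3$, a left endpoint, but in the block $K_{t,t}$ every left endpoint precedes every right endpoint). So ``read off $\mathcal{H}_{p/4}$ directly'' does not go through without a new idea that forces the chord endpoints into the interleaved arrangement the paper gets for free from the disjoint intervals $[v_{\pi(4i+1)},v_{\pi(4i+2)}]$.
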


\begin{proof} We define $p$ as the smallest positive  integer which is a multiple of 4 and is such that $p^3\ge \tfrac{\log^{(3)} n}{4\log^{(4)} n}$. For $n$ large enough we have $p^3\le \tfrac{\log^{(3)} n}{3\log^{(4)} n}$ and thus $3 \log p\le \log^{(4)} n-\log^{(5)}n-\log 3 \le \log^{(4)} n$. It follows that $\log^{(4)}n\ge 3 \log p$. Since $\log^{(3)}n\ge 3p^3\log^{(4)}n$, 
we have \[\log^{(3)}n\ge 9 p^3 \log p.\] As a consequence, for $n$ (and thus $p$) large enough, \[\log^{(2)}n\ge p^{9p^3}\ge  p^{4p^2(2p-4)+4}\log(p^6)+\log^{(2)}p.\]
This implies \[\log n\ge\left((p^6) \pow (p^4) \pow (p^2(2p-4)-1)\right)\cdot \log p,\] and thus \[n\ge p \pow (p^6) \pow (p^4) \pow (p^2(2p-4)-1)\ge R_{p^2}(2p;4).\]

\smallskip

For every pair $(i,j)$ with $1\leq i<j\leq n$, let us fix an increasing induced path $P_{i,j}$ from $v_i$ to $v_j$ (such a path always exist, for instance consider any shortest increasing path from $v_i$ to $v_j$ in $G[\intv{v_i}{v_j}]$).
If for some such pair, $P_{i,j}$ has at least $p$ vertices then we are done, so in the following we assume that all such paths have at most $p-1$ vertices

We consider the 4-clique on the vertex set $V(G)$.
To every hyperedge $\{i,j,k,\ell\}$ (with $1\leq i<j<k<\ell\leq n$) we assign a color as follows:
\begin{itemize}
    \item If there is no edge in $G$ between $P_{i,j}$ and $P_{k,\ell}$, we assign color 0 to $\{i,j,k,\ell\}$.
    \item Otherwise we arbitrarily choose an edge $e$ between these two paths. Let $s$ and $t$ respectively denote the index of the endpoints of $e$ along $P_{i,j}$ and $P_{k,\ell}$ (as these paths have at most $p-1$ vertices, each index is an element of $[p-1]$). We give color $(s,t)$ to $\{i,j,k,\ell\}$
\end{itemize}
As noted above the coloring we defined uses at most $(p-1)^2+1\leq p^2$ colors.
Applying Erd\H{o}s and Rado's bound on the multicolored Ramsey number (\Cref{th:ramseyc}) for $k = 4$, $q = p^2$ and $N=2p$ and the above inequality on $n$, the above hypergraph contains a 4-clique on $2p$ vertices with a unique color $c$. Let $v_{\pi(1)}, \dots, v_{\pi(2p)}$ denote these vertices, with $\pi(1)<\dots<\pi(2p)$. 

Suppose first that $c=0$. Consider a shortest increasing path $Q$ from $v_{\pi(1)}$ to $v_{\pi(2p)}$ in the ordered subgraph of $G$ induced by the vertices of the paths $P_{\pi(1),\pi(2)},\dots, P_{\pi(2p-1), \pi(2p)}$.
As $c=0$, for every $i,j\in \intv{1}{2p}$ with $i+1<j$ there is no edge from $P_{\pi(i),\pi(i+1)}$ to $P_{\pi(j), \pi(j+1)}$.
In particular for every $i\in \intv{2}{2p-1}$ there is no edge from any $P_{\pi(j), \pi(j+1)}$ with $j+1\leq i$ to any $P_{\pi(j'), \pi(j'+1)}$ with $j'\geq i+1$. It follows that $Q$ contains at least one vertex of each path $P_{\pi(j), \pi(j+1)}-\{v_{j+1}\}$, and thus $Q$ contains at least $p$ vertices. As $Q$ is an increasing induced path in $G$, we obtain the desired result.

Assume now that $c=(s,t)$, for some indices $s,t\in [p-1]$. For any $0\le i \le p/4-1$, we let $a_i$ be the vertex of index $s$ on $P_{\pi(4i+1),\pi(4i+2)}$, and $b_i$ be the vertex of index $t$ on $P_{\pi(4i+3),\pi(4i+4)}$. Then $a_1,b_1,\ldots,a_{p/4}b_{p/4}$ induce a half-graph $\mathcal{H}_{p/4}$ in $G$, as desired.
\end{proof}

Recall Observation \ref{obs:d2a}, which says that ordered graphs $H$ such that $\{g_H(n): n\in \mathbb{N}\}$ is unbounded must be bipartite, and moreover for each vertex $v\in V(H)$, all neighbors of $v$ are 
predecessors of  $v$, or all neighbors of $v$ are  successors of $v$. 

Using Theorem \ref{thm:GRS}, we now prove that for any graph $H$ as above, the growth of $g_H(n)$ is uniformly bounded by a triple logarithm. Moreover, we can always find long \emph{increasing} induced paths when the pattern $H$ is avoided. 

\begin{theorem}\label{th:log3}
Let $H$ be such that $\{f_H(n): n\in \mathbb{N}\}$ is unbounded. Then any graph $G$ with a Hamiltonian path $v_1,\ldots,v_n$ and which avoids the pattern $H$ contains an induced path of $\Omega((\log^{(3)}n/\log^{(4)}n)^{1/3})$ vertices which is increasing with respect to $v_1,\ldots,v_n$. In particular $g_H(n)=\Omega((\log^{(3)}n/\log^{(4)}n)^{1/3})$.
\end{theorem}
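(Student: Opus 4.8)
The plan is to derive this from Theorem~\ref{thm:GRS} together with Observation~\ref{obs:d2a}. The only real content is a short embedding fact: if the hypothesis of the theorem holds, then $H$ occurs as an ordered subgraph of a half-graph $\mathcal{H}_k$, where $k=|V(H)|$. Once that is in hand, Theorem~\ref{thm:GRS} does all the work.

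First I would unpack the hypothesis. By Observation~\ref{obs:d2a} (whose hypothesis is exactly that of the present theorem), $H$ is bipartite and every vertex of $H$ has all of its neighbours on one side of it in the order. Writing $u_1<\cdots<u_k$ for the vertices of $H$, this means that every edge $u_iu_j$ with $i<j$ joins a \emph{source} $u_i$ (all neighbours larger) to a \emph{sink} $u_j$ (all neighbours smaller); isolated vertices may be declared sources. Now map into $\mathcal{H}_k$ (whose vertex set in order is $a_1,b_1,\ldots,a_k,b_k$) by sending each source or isolated $u_i$ to $a_i$ and each sink $u_i$ to $b_i$. Since $a_i$ and $b_i$ occupy positions $2i-1$ and $2i$, the map is order-preserving and injective; and if $u_iu_j\in E(H)$ with $i<j$, then $u_i$ is a source (it has the larger neighbour $u_j$) so it maps to $a_i$, while $u_j$ is a sink so it maps to $b_j$, and $a_ib_j$ is an edge of $\mathcal{H}_k$ precisely because $i<j$. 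Hence $H$ is an ordered subgraph of $\mathcal{H}_k$, and therefore of $\mathcal{H}_m$ for every $m\ge k$, since $\mathcal{H}_k$ is (isomorphic to) an induced ordered subgraph of $\mathcal{H}_m$. In particular any ordered graph that contains $\mathcal{H}_m$ with $m\ge k$ as an ordered subgraph also contains $H$.

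Next I would apply Theorem~\ref{thm:GRS} to $G$ with its Hamiltonian path $v_1,\ldots,v_n$. It produces $p=\Theta((\log^{(3)}n/\log^{(4)}n)^{1/3})$ such that either $G$ has an increasing induced path on $p$ vertices, in which case we are done since $p=\Omega((\log^{(3)}n/\log^{(4)}n)^{1/3})$, or $G$ contains $\mathcal{H}_{p/4}$ as a pattern. In the latter case, as soon as $n$ is large enough in terms of $k=|V(H)|$ so that $p/4\ge k$, the embedding above shows that $G-E(P)$ contains $H$ as an ordered subgraph, i.e.\ $G$ contains $H$ as a pattern, contradicting the assumption that $G$ avoids $H$. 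For the finitely many small values of $n$ not covered this way, the asymptotic statement $g_H(n)=\Omega((\log^{(3)}n/\log^{(4)}n)^{1/3})$ holds after shrinking the multiplicative constant, since any nonempty graph has an increasing induced path on at least one vertex. The increasing induced path we output comes straight from Theorem~\ref{thm:GRS}, so it is increasing with respect to $v_1,\ldots,v_n$ as required.

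I do not expect a genuine obstacle: the argument is essentially a repackaging of Theorem~\ref{thm:GRS}, and the embedding of $H$ into $\mathcal{H}_k$ is elementary once Observation~\ref{obs:d2a} is invoked. The one point that needs a little care is the distinction between containing $\mathcal{H}_{p/4}$ as a \emph{pattern} (i.e.\ in $G-E(P)$) and merely as an ordered subgraph of $G$: what we use is that the half-graph produced by Theorem~\ref{thm:GRS} uses only edges that are not edges of $P$, which is what its statement asserts and is clear from its proof, where those edges join internally vertex-disjoint induced paths $P_{i,j}$ separated by positive gaps.
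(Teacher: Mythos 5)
Your proof is correct and follows the same route as the paper: Observation~\ref{obs:d2a} to reduce to source/sink patterns, then Theorem~\ref{thm:GRS} to obtain a long increasing induced path or a large half-graph pattern, and a direct embedding of $H$ into $\mathcal{H}_{p/4}$. The only difference is cosmetic: the paper takes a half-graph of size $4|V(H)|$ and selects every fourth block so that no two images of $H$-vertices are $P$-consecutive, whereas you embed $H$ into $\mathcal{H}_{|V(H)|}$ directly and justify this by noting (correctly) that the half-graph Theorem~\ref{thm:GRS} produces is already a \emph{pattern}, i.e.\ its edges live in $G-E(P)$, which makes the paper's extra spacing redundant.
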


\begin{proof}
By Observation~\ref{obs:d2a}, $H$ has a bipartition $U,V$ such that for any $u\in U$, all neighbors of $u$ are 
successors of  $u$, and for any $v\in V$, all neighbors of $v$ are  predecessors of $v$. Let $h=|V(H)|$. Consider a graph $G$ with a Hamiltonian path $v_1,\ldots,v_n$. By Theorem \ref{thm:GRS}, if $G$ has no increasing induced path of order $\Omega((\log^{(3)}n/\log^{(4)}n)^{1/3})$, it contains an ordered half-graph whose size (as a function of $n$) is unbounded. In particular for $n$ large enough, there exist integers \[1\le \pi(1)<\phi(1)<\cdots < \pi(4h)<\phi(4h)\le n\] such that for any $1\le i <j\le h$,  $v_{\pi(i)}$ is adjacent to $v_{\phi(j)}$. Denote the (ordered) vertex set of $H$ by $u_1,\ldots,u_h$. For every $1\le i \le h$, map $u_i$ to $v_{\pi(4i-3)}$ if $u_i\in U$, and otherwise map $u_i$ to $v_{\phi(4i-1)}$. Note that the subgraph of $G$ induced by the chosen vertices is a supergraph of $H$, moreover the images of any two vertices $u_i,u_j$ in $G$ are non-consecutive in the Hamiltonian path $v_1,\ldots,v_n$. It follows that the ordered graph $G$ with the Hamiltonian path $v_1,\ldots,v_n$ contains $H$ as a pattern.
\end{proof}

We conclude with the proof of the dichotomies of \Cref{cor:dicho}
\begin{proof}[Proof of \Cref{cor:dicho}]
The items of the statement are direct consequences of the following results:
\begin{enumerate}
    \item \Cref{obs:d2b}, \Cref{cor:1crossUB}, and \Cref{thm:ncom};
    \item \Cref{th:ordbip};
    \item \Cref{obs:d2a} and \Cref{th:log3}.
\end{enumerate}
\end{proof}

\section{Open problems}\label{sec:open}

Recall that the work described in this paper revolves around the two following questions:

\fstquestion*

\sndquestion*

Regarding \Cref{question}, we recall that the currently best bounds are summarized in \Cref{fig:survey}.
Three different behaviors of the function $f$ are highlighted there: polynomial, polylogarithmic and doubly polylogarithmic. As noted in the introduction, for hereditary graph classes the exclusion of a bipartite subgraph is the most general setting where the property of \Cref{question} holds with an unbounded function $f$, and we proved above that in this case, $f(n) = \Omega(\polyloglog(n))$. However, there is currently no known characterization of the hereditary graph classes where $f$ is at least polylogarithmic or polynomial. We leave this as an open problem.

A second goal is to close the gaps between known upper- and lower-bounds on the function~$f$, in particular in following settings:
\medskip 
\begin{center}
\begin{tabular}{llll}
 class & lower-bound & upper-bound & references\\ 
 \hline 
 graphs of $\pw < k$ & $\frac{1}{3} n^{1/k}$ & $n^{2/k} + 1$ & \cite{hilaire2023long}\\[0.5em]  
 graphs of $\tw < k$ & $\frac{1}{4} (\log n)^{1/k}$ & $(k+1) (\log n)^{2/(k-1)}$& \cite{hilaire2023long, esperet2017long}\\[0.5em]
 planar graphs & $\left (\frac{1}{2\sqrt{6}} - o(1) \right ) \sqrt{\log n}$ & $\frac{3 \log n}{\log \log n}$& \cite{esperet2017long}\\[0.5em]
 $d$-degenerate & $\log\log n/\log (d+1)$ & $O((\log \log n)^2)$ & \cite{nevsetvril2012sparsity, defrain2024sparse}\\[0.5em]
 $K_{t,t}$-subgraph-free & $\Omega\left (\left (\frac{\log \log n}{\log \log \log n}\right)^{1/5}\right)$ &$O((\log \log n)^2)$ & this paper and \cite{defrain2024sparse}.
 \end{tabular}
\end{center}
\medskip

Finally, one could more generally investigate graph classes avoiding an arbitrary (finite or infinite) number of patterns, instead of a single pattern. Does the function $g$ in this    case fall within the few different behaviors observed above?

\subsection*{Acknowledgements}

The  authors would like to thank Stéphan Thomassé for stimulating discussions at the beginning of this project, and Clément Rambaud for suggesting Corollary~\ref{cor:minclosed}.

\bibliographystyle{alpha}
\bibliography{references}

\end{document}